\newtheorem{theorem}{Theorem}[section]
\newtheorem{corollary}[theorem]{Corollary}
\newtheorem{definition}[theorem]{Definition}
\newtheorem{lemma}[theorem]{Lemma}
\newtheorem{proposition}[theorem]{Proposition}
\newtheorem{assumption}[theorem]{Assumption}
\newtheorem{example}[theorem]{Example}
\newtheorem{remark}[theorem]{Remark}
\numberwithin{equation}{section}
\newcommand{\hprob}{h_{\mathrm{prob}}}
\newcommand{\epsprob}{\eps_{\mathrm{prob}}}
\newcommand{\Hprec}{H_{\mathrm{prec}}}
\newcommand{\epsprec}{\eps_{\mathrm{prec}}}
\newcommand{\R}{\mathbb{R}}
\newcommand{\cC}{{\mathcal C}}
\newcommand{\cI}{{\mathcal I}}
\newcommand{\cT}{{\mathcal T}}
\newcommand{\cK}{{\mathcal K}}
\newcommand{\cV}{{\mathcal V}}
\newcommand{\cN}{{\mathcal N}}
\newcommand{\cO}{{\mathcal O}}
\newcommand{\bC}{\mathbf{C}}
\newcommand{\br}{\mathbf{r}}
\newcommand{\bV}{\mathbf{V}}
\newcommand{\bW}{\mathbf{W}}
\newcommand{\bfv}{\mathbf{v}}
\newcommand{\bu}{\mathbf{u}}
\newcommand{\by}{\mathbf{y}}
\newcommand{\bff}{\mathbf{f}}
\newcommand{\bfr}{\mathbf{r}}
\newcommand{\bfd}{\mathbf{d}}
\newcommand{\veps}{\vert \eps \vert}
\newcommand{\C}{\mathbb{C}}
\newcommand{\tOmega}{\widetilde{\Omega}}
\newcommand{\ri}{{\rm i}}
\newcommand{\beq}{\begin{equation}}
\newcommand{\eeq}{\end{equation}}
\newcommand{\beqs}{\begin{equation*}}
\newcommand{\eeqs}{\end{equation*}}
\newcommand{\bit}{\begin{itemize}}
\newcommand{\eit}{\end{itemize}}
\newcommand{\ben}{\begin{enumerate}}
\newcommand{\een}{\end{enumerate}}
\newcommand{\bal}{\begin{align}}
\newcommand{\eal}{\end{align}}
\newcommand{\bals}{\begin{align*}}
\newcommand{\eals}{\end{align*}}
\newcommand{\bse}{\begin{subequations}}
\newcommand{\ese}{\end{subequations}}
\newcommand{\bpr}{\begin{proposition}}
\newcommand{\epr}{\end{proposition}}
\newcommand{\bre}{\begin{remark}}
\newcommand{\ere}{\end{remark}}
\newcommand{\bpf}{\begin{proof}}
\newcommand{\epf}{\end{proof}}
\newcommand{\ble}{\begin{lemma}}
\newcommand{\ele}{\end{lemma}}
\newcommand{\bco}{\begin{corollary}}
\newcommand{\eco}{\end{corollary}}
\newcommand{\bex}{\begin{example}}
\newcommand{\eex}{\end{example}}
\newcommand{\bth}{\begin{theorem}}
\newcommand{\enth}{\end{theorem}}
\newcommand{\Rea}{\mathbb{R}}
\newcommand{\Com}{\mathbb{C}}
\newcommand{\supp}{\mathop{{\rm supp}}}
\newcommand{\eps}{\varepsilon}
\newcommand{\pdiff}[2]{\frac{\partial #1}{\partial #2}}
\newcommand{\dudn}{\pdiff{u}{n}}
\newcommand{\dudnw}{\partial u/\partial n}
\newcommand{\nrme}[1]{\Vert  #1 \Vert_{1,k}}
\newcommand{\Vell}{\cV_\ell}
\newcommand{\ksqeps}{\frac{k^2}{\vert\eps\vert}}
\newcommand{\gu}{\nabla u}
\newcommand{\gv}{\nabla v}
\newcommand{\LtG}{L^2(\GammaN)}
\newcommand{\LtO}{L^2(\Omega)}
\newcommand{\HoO}{H^1(\Omega)}
\newcommand{\tendo}{\rightarrow 0}
\newcommand{\matrixA}{A}
\newcommand{\matrixAeps}{{A}_\eps}
\newcommand{\matrixAepsinv}{\matrixAeps^{-1}}
\newcommand{\matrixAepsell}{A_{\eps, \ell}}
\newcommand{\matrixAzeroeps}{A_{\eps,0}}
\newcommand{\matrixAepszero}{A_{\eps,0}}
\newcommand{\bfzero}{\mathbf{0}}
\newcommand{\matrixR}{{ R}}
\newcommand{\matrixP}{{ P}}
\newcommand{\Qepsell}{Q_{\eps,\ell}}
\newcommand{\Qepsz}{Q_{\eps,0}}
\newcommand{\aeps}{a_\eps}
\def\XXint#1#2#3{{\setbox0=\hbox{$#1{#2#3}{\int}$}
     \vcenter{\hbox{$#2#3$}}\kern-.5\wd0}}
\newcommand{\bv}{\overline{v}}
\newcommand{\bgv}{\overline{\gv}}
\newcommand{\GammaN}{\Gamma}
\newcommand*{\N}[1]{\left\|#1\right\|}
\newcommand{\matrixS}{{ S}}
\newcommand{\matrixM}{{ M}}
\newcommand{\matrixB}{{ B}}
\newcommand{\matrixBepsinv}{{ B}_\eps^{-1}}
\newcommand{\matrixBepsAVE}{{ B}_{\eps,AVE}^{-1}}
\newcommand{\matrixBepsRAS}{{ B}_{\eps,RAS}^{-1}}
\newcommand{\matrixN}{{ N}}
\newcommand{\matrixC}{{C}}
\newcommand{\matrixI}{{ I}}
\definecolor{darkred}{RGB}{139,0,0}
\definecolor{darkgreen}{RGB}{0,100,0}
\definecolor{darkmagenta}{RGB}{139,0,139}
\definecolor{darkpurple}{RGB}{110,0,180}
\definecolor{darkblue}{RGB}{40,0,200}
\newcommand{\bfx}{\mathbf{x}}
\definecolor{myblue}{rgb}{0,0,0.6}
\providecommand{\red}{}
\newcommand{\ton}{\text{ on }}
\newcommand{\tin}{\text{ in }}
\newcommand{\tfa}{\text{ for all }}
\newcommand{\kseb}{\left(\frac{k^2}{\veps}\right)}
\newcommand{\eksb}{\left(\frac{\veps}{k^2}\right)}
\newcommand{\Hsub}{H_{\text{sub}}}
\begin{document}

\title[Domain Decomposition for high-frequency Helmholtz]{Domain Decomposition  preconditioning  for  \\  high-frequency 
Helmholtz problems \red{with} absorption }

\author{I.G. Graham}
\address{Dept of Mathematical Sciences, University of Bath,
Bath BA2 7AY, UK.}
\curraddr{}
\email{I.G.Graham@bath.ac.uk}
\thanks{}

\author{E.A. Spence}
\address{Dept of Mathematical Sciences, University of Bath,
Bath BA2 7AY, UK.}
\email{E.A.Spence@bath.ac.uk}
\thanks{} 

\author{E. Vainikko}
\address{Institute of Computer Science, University of Tartu, 50409, Estonia}
\email{eero.vainikko@ut.ee}
\thanks{}

\begin{abstract}
{In this paper we give new results on    
domain decomposition preconditioners  for GMRES when computing piecewise-linear  finite-element approximations} of the Helmholtz equation   
$-\Delta u  - (k^2+ \ri \eps)u = f$,  with absorption parameter $\eps \in \R$. Multigrid approximations of  this equation with $\eps \not= 0$ are  commonly used as preconditioners   for the 
 pure Helmholtz case ($\eps = 0$). 
However a  rigorous theory for such (so-called 
 ``shifted Laplace'') preconditioners, either for the pure Helmholtz equation,  or  
even the \red{absorptive} equation ($\eps \not=0$), is still missing. 
We present a new theory for the \red{absorptive} equation  
that provides 
rates of convergence for (left- or right-) preconditioned GMRES,   via estimates of  the norm and field of values of 
the preconditioned matrix. This theory  uses  a $k$- and $\eps$-explicit coercivity result for 
the underlying  sesquilinear form  and  shows, for example,  that if $|\eps|\sim k^2$,  then 
classical overlapping additive Schwarz  will perform  optimally 
for   the absorptive problem, provided  
the subdomain and coarse mesh diameters are   carefully chosen.   
\red{Extensive numerical experiments are given that  support the theoretical results.} 
While the theory applies to a  certain weighted variant of GMRES, the experiments for 
both weighted and classical GMRES give comparable  results.  
\red{The theory for the  absorptive case  gives insight into how  
its domain decomposition approximations 
perform as preconditioners for the pure Helmholtz case $\eps = 0$.}   
At the end of 
the paper we propose a (scalable)  multilevel preconditioner for the
pure Helmholtz  problem that has  
an empirical computation time complexity of about 
$\mathcal{O}(n^{4/3})$ for solving finite element systems of size  $n=\mathcal{O}(k^3)$, where we have chosen the mesh diameter $h \sim k^{-3/2}$ to avoid the pollution effect. Experiments on problems with $h\sim k^{-1}$, i.e. a fixed number of grid points per wavelength, are also given.
\end{abstract}

\keywords{Helmholtz equation, high frequency, absorption, iterative solvers, preconditioning, domain decomposition, GMRES}

\maketitle

\section{Introduction}\label{sec:intro}
 
\noindent  
\red{This paper is concerned with domain-decomposition preconditioning for finite-element discretisations of the boundary value problem
\begin{equation}
\left\{  \begin{array}{rl}
- \Delta u  -  (k^2 + {\ri} \eps) u &=  f \quad \mbox{ in } \Omega,\\
\dudnw -  \ri \eta u &= g \quad \mbox{ on } \GammaN,   
\end{array}\quad \quad \right.\label{2}
\end{equation}
with $k>0$ and  $\eta = \eta(k,\eps)$, where either (i) $\Omega$ is a bounded domain in $\mathbb{R}^d$ 
with boundary $\Gamma$ or (ii) $\Omega$ is the exterior of a bounded 
scatterer,  $\GammaN$ denotes  an approximate   far field boundary, and the 
problem is appended with a homogeneous  Dirichlet condition on the boundary of the 
scatterer. }
\red{Although the PDE in \eqref{2} is relevant in  applications, our main motivation for studying this problem is its recent use in preconditioning the corresponding BVP for the Helmholtz equation:
\begin{equation}
\left\{  \begin{array}{rl}
- \Delta u -  k^2 u &=  f \quad \mbox{ in } \Omega,\\
\dudnw -  \ri k u &= g \quad \mbox{ on } \GammaN ,
\end{array}\quad \quad \right.\label{1}
\end{equation}
Linear systems arising from finite element
approximations of \eqref{2} 
with high wavenumber  
$k$ are notoriously hard to solve. Because the system matrices are 
non-Hermitian and generally non-normal, general iterative methods like 
preconditioned (F)GMRES have to be employed. Analysing the convergence of these 
methods is hard, since an analysis of the spectrum of the system matrix 
alone is not sufficient for any rigorous convergence estimates.}
 
\red{The idea of preconditioning discretisations of \eqref{1} with approximate  discretisations of \eqref{2} is often  called ``shifted  Laplacian" preconditioning. From its origins in  \cite{ErVuOo:04}, this  idea has had a large   impact on the field of practical  fast Helmholtz solvers. The main aim of the present  paper is to provide theoretical underpinning for this idea, and to use this theoretical understanding to   
develop new preconditioners for \eqref{1}. }
  
We denote the system matrix arising from 
continuous piecewise linear ($P1$)  Galerkin finite element approximations of \eqref{2} by
$A_\eps$ (or simply  $A$ when $\eps = 0$). 
For the solution of ``pure Helmholtz'' systems    
$A\mathbf{u} = \bff$, 
the ``shifted Laplacian'' preconditioning strategy (written in
left-preconditioning mode),  involves iteratively solving  the equivalent
problem 
 \beq\label{eq:discreteB}
\matrixBepsinv \matrixA \bu = \matrixBepsinv\bff, 
\eeq
where $B_\eps^{-1}$ is   some readily
computable  approximation   of  
$A_\eps^{-1}$ (for example a multigrid V-cycle). 
The rigorous analysis of the performance of this preconditioner is 
complicated, partly  because it is  based on a  double approximation:  
$A^{-1} \approx A_\eps^{-1} \approx B_\eps^{-1}$,    and partly because the 
\red{convergence theory of}   GMRES 
for non-self-adjoint systems requires one to estimate either the field of values of the system matrix or the spectrum and its conditioning.
  
One natural approach is to write 
\beq\label{eq:key_intro}
I -\matrixBepsinv\matrixA \ = \ \matrixI - \matrixBepsinv \matrixA_{\eps}
\ + \ \matrixBepsinv \matrixA_{\eps} (\matrixI - \matrixA_{\eps}^{-1} 
\matrixA)\ ,
\eeq
and to  recall that a sufficient (but by no means necessary)  condition
for GMRES to converge quickly is that the 
field of values of the system matrix should be bounded away from the origin and the norm of the system matrix should be bounded above. It is therefore clear from \eqref{eq:key_intro} that sufficient conditions  for  
$B_\eps^{-1}$ to be a good preconditioner for $A$ are: 
\begin{quotation} 
(i) $\matrixAepsinv$ is  a good
preconditioner for $\matrixA$   
\end{quotation} 
and 
 \begin{quotation} 
(ii) $\matrixBepsinv$ is  a good
preconditioner for $\matrixAeps$. 
 \end{quotation} 
\red{Achieving both (i) and (ii) simultaneously imposes contradictory requirements on $\eps$.}
Indeed, it is natural to expect that (i) holds if $|\eps|$ is sufficiently
small, but that for (ii) to hold we need $|\eps|$ sufficiently large.    
Most analyses of the performance of $B_\eps^{-1}$ as 
a preconditioner for $A$ have focused on obtaining conditions under which property (i) holds and \red{have}   
concentrated on analysing spectra. 
While a  detailed
literature survey is given in
\cite[\S1.1]{GaGrSp:13}, an up-to-date   summary of this  is given at the end of this section.

In \cite{GaGrSp:13} we   gave the first rigorous theory that 
identified  conditions that ensure 
(i) above holds. There, under general conditions on the domain and mesh sequence,
we showed that when $|\eps|/k$ was bounded above by a  sufficiently
small constant then (i) holds. 

The main theoretical purpose of the current paper is to obtain sufficient conditions for  (ii) to hold in the case when $B_\eps^{-1}$ is chosen as 
a   classical Additive Schwarz  preconditioner for 
$A_\eps$. 
We use the rigorous
convergence theory  of \cite{EiElSc:83} (see also \cite{CaWi:92}, \cite[\S1.3.2]{OlTy:14}),   in which criteria
for convergence of GMRES are given in terms of an  upper bound on
the norm of the system matrix and a lower bound on the distance of its
field of values from the origin. In the  Additive Schwarz construction,   the domain is covered with  
overlapping subdomains with   diameter denoted  $\Hsub$ and also  triangulated with 
a coarse mesh with diameter denoted  $H$. (It is not necessary for $\Hsub$ and $H$ to be related.)
The overlap parameter is denoted  
$\delta$, and  $\delta \sim H$ corresponds   to ``generous overlap''.   
Further  technical requirements are given in \S \ref{sec:DD}.

\red{We highlight at this stage that the conditions on $|\eps|$ that we find for (ii) above to hold do not overlap with those described above for (i) to hold, and thus the combination of this paper with \cite{GaGrSp:13} does not provide a complete theory for preconditioning the Helmholtz equation with absorption.
Nevertheless
\bit
\item[(a)] \red{we believe that} the present paper combined with \cite{GaGrSp:13} constitute the only rigorous results in the literature addressing when either of the properties (i) or (ii) above hold,
\item[(b)] the investigation into the property (ii) in the present paper, combined with the knowledge from \cite{GaGrSp:13} about the property (i), gives insight into how to design a good preconditioner for $A$ (albeit one currently without a rigorous convergence theory); this is especially true when considering multilevel methods -- see the discussion around Experiments 2 and 3 in \S\ref{sec:Numerical}.
\eit
}

\subsection{Summary of main theoretical results}
Throughout the paper we assume  that $\red{0<}\vert \eps\vert  \lesssim k^2$, so the ratio $\vert \eps\vert /k^2$ is always 
bounded above,   but may approach zero as $k \rightarrow \infty$. 
 Our main theoretical results  are Theorems   \ref{thm:final1}, and 
\ref{thm:final3} and their corollaries, which are proved in \S \ref{sec:Matrices}. 
Theorem \ref{thm:final1} examines the left-preconditioned matrix:  $B_\eps^{-1}A_\eps$, and obtains an upper bound on its norm and a lower bound on 
its field of values.
The upper bound on the norm is   
  $\cO(k^2/\vert\eps\vert)$, while the distance of the field of values from the origin (in the case 
of generous overlap)   has a lower bound of order   $\cO((\vert \eps \vert /k^2)^2)$.  
 These bounds are obtained subject to  the
subdomain and \red{coarse} mesh diameters satisfying bounds:   $k\Hsub \lesssim \vert \eps\vert /k^2$ and   
$kH \lesssim (\vert \eps\vert /k^2)^3$.

An important special case is that of maximum absorption  
$\vert \eps\vert  \sim k^2$. Then  the results imply  that the  number of GMRES iterates will   be 
bounded  independently  of $k$,  provided $\Hsub$ and $H$ both decrease with  order $k^{-1}$.
Thus,   provided     there is enough absorption in the system, the GMRES method will perform 
analogously to the Laplacian case, provided the   coarse mesh  decreases proportional  
to the wavelength (i.e.~no further refinement  for  pollution is needed). Thus if the 
mesh diameter $h$ of the fine grid decreases as $\cO(k^{-3/2})$ (needed to remove pollution in the underlying discretization), 
then considerable coarsening can be carried out. We actually 
see in the numerical experiments in \S \ref{sec:Numerical}  that further coarsening beyond the $k^{-1}$ theoretical limit \red{may be   possible, depending on the choice of $\eps$}.)          
Analogous results for right preconditioning (obtained by a duality argument) are 
given in Theorem \ref{thm:final3}. Then  Corollaries \ref{cor:final2} and \ref{cor:final4} give the  corresponding  estimates for GMRES convergence for each of these preconditioners. 
     
\subsection{How the theoretical results were obtained}        
 As in classical Schwarz theory, 
the proofs of Theorems \ref{thm:final1} and \ref{thm:final3} are obtained from  a 
projection operator analysis (given in \S \ref{sec:Convergence}).  However,   in order to get good results 
for large $k$  we do not use the classical approach of treating the Helmholtz operator as 
a perturbation of the Laplacian, \red{as was done in} \cite{CaWi:92} 
\red{(see also \cite{GoPa:03}, where this approach was used  for the time-harmonic Maxwell equations)}.
Rather,   we exploit  the coercivity of the 
problem with absorption  (Lemma \ref{lem:coer}),   leading  
to a  projection analysis in the 
wavenumber-dependent inner product $( \cdot , \cdot )_{1,k}$.  
The norm of the projection operator corresponding to the 
two-level algorithm  
is estimated above in Theorem \ref{thm:boundQ}, while the distance of its field
of values from the origin is estimated below in Theorem \ref{thm:final}. 
The analysis depends on a technical estimate on the approximation power of the coarse space (Assumption \ref{ass:approx}). We prove this estimate for convex polygons (Theorem \ref{thm:polygon}), and we also outline how to prove it for more general 2- and 3-d domains (Remark \ref{rem:approx}).

The estimates for the projection operators in \S \ref{sec:Convergence} are converted to estimates for the norm and field of values of preconditioned Helmholtz matrices in \S \ref{sec:Matrices}. 
\red{Because} the analysis is performed in the ``energy'' inner product $\Vert \cdot \Vert_{1,k}$, the corresponding matrix estimates are obtained in the induced  weighted Euclidean inner product. (A similar situation arises in the classical analysis \cite{CaZo:02}.)  We performed  numerical experiments both for standard GMRES (with residual mininmization in the Euclidean norm) and for weighted GMRES (minimizing in the weighted norm), but in practice there was little difference in the results. 
    
\subsection{Overview of numerical results}    
A sequence of  numerical experiments is given in  \S\ref{sec:Numerical} for solving systems \red{with matrix} $A_\eps$ 
with $h\sim k^{-3/2}$ ($n \sim k^3$, where $n$ is the system dimension), yielding  (empirically) pollution-free finite element  solutions.   
\red{In these experiments $H\sim \Hsub$.}
First, we  consider the performance of the
preconditioner  $B_\eps^{-1}$ (defined by the  classical
Additive Schwarz method), when   applied to problems with 
coefficient  matrix $A_\eps$. 
As predicted by  the theory,   we see that 
$B_\eps^{-1} $ is an optimal preconditioner when  $|\eps| \sim
k^2$ (i.e.~the number of GMRES iterates is parameter independent), provided  the coarse grid
diameter $H$ and subdomain diameter $\Hsub$  are  sufficiently
small. Experimentally, good results are also obtained even with larger $H,
\Hsub$ when $\eps$ is large
enough, and even with smaller  $\eps$  when $H, \Hsub$ are small enough. 
We also test variants of the classical method, 
including Restricted Additive Schwarz (RAS) and the Hybrid
variant of this (HRAS) (where coarse and local parts of the preconditioner
are combined multiplicatively). Out of all the methods tested, HRAS
performs the best. 

\red{Based on this empirical insight gained about preconditioning $A_\eps$,}
we then investigate the performance of HRAS
(with absorption $\eps$) as a preconditioner for the pure Helmholtz
problem  with coefficient matrix  $A$. We
find that HRAS still works well,  provided $H$ and $\Hsub$ are small
enough. There is  surprisingly little variation in the performance with
respect to the choice of $\eps$. (In fact with $|\eps| = k^\beta$, the
performance is almost uniform in the range $\beta \in [0,1.2]$ but
there is some degradation as $\beta$ approaches $2$.   This is
surprising as the choice $\beta \sim  2$ is normally used in the multigrid context.
We also test a variant of
HRAS that uses impedance conditions on subdomain solves and this
works well,  especially for larger $H, \Hsub$. 

Finally,  to solve problems with
matrix $A$ in the case of large $k$,  we  recommend an inner-outer preconditioner for use
within FGMRES, where the outer solver is HRAS with $|\eps| = k$ and
$H\sim \Hsub \sim k^{-1}$.  The cost of the preconditioner is then dominated
by the coarse grid problem, and for this we apply an inner iteration
with preconditioner chosen as one-level HRAS with impedance boundary
condition on local problems. With the best choice of $\eps$ appearing
to be $|\eps| \sim k$, we find this solver has a compute time of about
$\mathcal{O}(k^4) \sim \mathcal{O}(n^{4/3})$ for the 2D problems
tested, up to $k = 100$.
This is a highly scalable preconditioner, whose action consists of inverting $\cO(k^2)$ (parallel) finite-element systems of size $\cO(k)$ and an additional $\cO(k)$ finite-element systems of size $\cO(k)$.
\red{Additional experiments, together with multilevel  variations suitable for the case $h \sim k^{-1}$ are given in \cite{GrSpVa:16}.}

\subsection{Literature review} 
We finish this section with a short literature survey on this topic, beginning with the literature on preconditioning with absorption, and then briefly discussing domain decomposition methods for wave problems.

The survey in \cite[\S1.1]{GaGrSp:13} focused on the spectral analyses in \cite{ErVuOo:04}, \cite{ErOoVu:06}, \cite{VaErVu:07}, \cite[\S5.1.2]{Er:08}, \cite{ErGa:12}, all of which concern the optimal choice of $\eps$ for $A_\eps$ to be a good preconditioner for $A$, \red{i.e. for property (i) above to hold}.
Several authors have considered the \red{question of when} multigrid methods \red{converge when applied} to the problem with absorption (i.e. $A_\eps$); \red{this is related to (but not the same as) the question of when property (ii) above holds.}
Cools and Vanroose \cite{CoVa:13} computed the ``minimal shift" (defined as the smallest value of $\eps$ for which every single eigenmode of the error is reduced through consecutive multigrid iterations) based on numerical evaluation of quantities arising from Fourier analysis, and found that (as a function of $k$) it is proportional to $k^2$.  Cocquet and Gander \cite{CoGa:14} (following on from \cite{ErGa:12}) 
showed that, for a particular standard variant of multigrid applied to the 1-d Helmholtz equation with Dirichlet boundary conditions, one
 needs $|\eps|\sim k^2$ to obtain convergence independent of $k$. They also analysed a less-standard variant of multigrid applied to general multi-dimensional Helmholtz problems with either Dirichlet or impedance boundary conditions, and showed that again \red{one} needs $\red{|\eps|}\sim k^2$ for the method to be practical. 
Note that these analyses are concerned with the {\em convergence} of 
multigrid as a solver for $A_\eps$, rather than using an approximation such as the V-cycle applied to the problem with absorption  as 
a preconditioner for either the absorbing or the non-absorbing problem ($A$ or $A_\eps$ respectively).

The study of non-overlapping domain-decomposition methods for wave problems has a long history, starting with the seminal
paper of Benamou and Despr\'es \cite{BeDe:97}. Following that, 
 optimized interface conditions were introduced \cite{GaMaNa:02},  the 
 success of which  sparked substantial interest, for
 example \cite{GaHaMa:07}, \cite{DoGaGe:09}, and more recently the ``source
 transfer'' and related methods \cite{ChXi:13}, \cite{ChXi:13a}, and \cite{St:13}; these latter methods can be viewed as putting the ``sweeping" method of \cite{EnYi:11b} in a continuous (as opposed to discrete) setting.
 All these non-overlapping domain decomposition methods focus on the choice
 of good interface conditions but so far do not provide a systematic method of combining these with coarse grid operators or a convergence analysis explicit in subdomain or coarse
 grid size.    There are  also a   few results on overlapping domain
 decomposition methods e.g.     \cite{To:98}, \cite{KiSa:07}, \cite{KiSa:13}, with the latter
 explicitly using absorption; these demonstrated the potential of the
 methods analysed in this paper.
 Finally, we note that \cite{ZeDe:14} introduces a new sweeping-style method for the Helmholtz equation, and also contains a good literature review of both domain-decomposition and sweeping-style methods.

\section{Variational formulation}

\label{sec:Var} 

For ease of exposition,
we restrict attention to the interior impedance problem (i.e.~\eqref{1} is posed for $\Omega$ a bounded domain in $\mathbb{R}^d$ 
with boundary $\Gamma$). The results of the paper also hold for the truncated sound-soft scattering problem, and we outline in Remark \ref{rem:trunc} how to adapt them to this case. 

Let $\Omega$ be a bounded, open, 
polygonal (Lipschitz polyhedral) 
domain in  
$\mathbb{R}^d$, $d = 2$ (or $3$),  with boundary $\Gamma$. 
We introduce the standard   
$k$-weighted inner product and norm on $H^1(\Omega)$:  
$$
(v,w)_{1,k} \ = \ (\nabla v, \nabla w)_{L^2(\Omega)} + k^2 (v,w)_{L^2(\Omega)} \quad  \text{and}  \quad 
\Vert v \Vert_{1,k} = (v,v)_{1,k}^{1/2} \ .  
$$
The standard variational formulation of \eqref{2} is:
Given $f \in L^2(\Omega)$, $g\in \LtG$, $\eps \in \mathbb{R}$ and $k>0$ find $u \in \HoO$ such that 
\beq\label{eq:vf_intro}
a_\eps(u,v) = F(v) \quad \text{ for all }\,  v \in \HoO,
\eeq
where 
\beq\label{eq:Helmholtzvf_intro}
a_\eps(u,v) := \int_\Omega \gu \cdot \bgv - (k^2 + \ri \eps) \int_\Omega u \bv - \ri \eta  \int_{\GammaN} u\bv, 
\eeq
and 
\beq\label{eq:F_intro}
F(v) := \int_\Omega f\bv + \int_{\GammaN} g \bv.
\eeq
In general $\eta$  can be complex, with a natural choice being a square root of $k^2 + \ri \eps$. (more details are in Lemma \ref{lem:coer}). 
When $\eps = 0$ and $\eta = k$ we are solving \eqref{1} and we simply write 
$a$ instead of $a_\eps$. 

We consider the discretisation of  problem \eqref{eq:vf_intro} with $P1$ 
 finite elements.   
Let 
 $\cT^h$ be  a family of
conforming  meshes  (triangles in 2D, tetrahedra in 3D), 
that are shape-regular as the  
mesh diameter  $h \rightarrow 0$.  
A typical element of $\cT^h$ is  $\tau \in \cT^h$ (a closed subset of
$\overline{\Omega}$). 
Then our approximation space $\cV^h$ is the space of all continuous 
functions on $\Omega$ that are piecewise affine with respect to $\cT^h$. (The impedance boundary condition in \eqref{1} is 
implemented as a natural boundary condition.) 
The freedoms for  $\cT^h$ are the nodes,   denoted 
$\cN^h = \{x_j:j \in \cI^h\}$, where $\cI^h$ is  a suitable index
set. 
The standard basis for $\cV^h$ is 
$\{ \phi_j : j \in \cI^h \}$ 
consisting of hat functions corresponding to the each of the nodes in $\cN^h$. 

The Galerkin approximation of \eqref{eq:vf_intro} in the space 
$\cV^h$ is equivalent to the  system 
 \beq\label{eq:discrete}
 \matrixA_\eps \bu  \ := \ (\matrixS - (k^2+ \ri \eps)  \matrixM - \ri \eta \matrixN) \bu \ =\  \bff , 
 \eeq
where   
\begin{equation}\label{eq:matrices}
\matrixS_{\ell,m} = \int_{\Omega} \nabla \phi_\ell \cdot \nabla
\phi_m , \quad   \matrixM_{\ell,m} = \int_{\Omega}
\phi_\ell  \phi_m , \quad     
\matrixN_{\ell,m} = \int_{\Gamma} \phi_\ell  \phi_m, \quad  
\ell, m \in \cI^h\ \end{equation}
are,  respectively,  the stiffness matrix, the domain mass matrix,
and the boundary
mass matrix.  Again we write the corresponding system matrix for \eqref{1} simply as $A$. Note that $\matrixA$ and $\matrixA_\eps$ are symmetric
but not Hermitian. 

In this section we briefly provide the key properties of the
sesquilinear form $a_\eps$ given in \eqref{eq:Helmholtzvf_intro}. This
form depends on all three  parameters $\eps, k$ and $\eta$, but only the first of these is reflected in the notation.  Normally $\eta$ will be chosen as a function of $\eps$ and $k$. We will assume throughout that
\begin{equation} \label{eq:est_epseta} 
\vert \eps\vert  \lesssim  k^2  \quad \text{and} \quad \vert \eta \vert \lesssim k . 
\end{equation} 
(Here the notation $A\lesssim B$ (equivalently $B \gtrsim A$) means that $A/B$ is bounded above by a constant independent of $k$, $\eps$, and mesh diameters $h, \Hsub, H$ (the latter two introduced below).  We write $A \sim B$ when  $A\lesssim B$ an  $B \lesssim  A$. 

The proof of the first result is  a simple application of the  
Cauchy-Schwarz  and  multiplicative trace 
inequalities - see,  e.g., \cite[Lemma 3.1(i)]{GaGrSp:13}. 
 \ble[Continuity]\label{lem:cont}
If $\vert \eta \vert \lesssim k$ then,  
given $k_0>0$, there exists a $C_c$ independent of $k$ and $\eps$ such that 
\beq
|a_{\eps}(v,w)| \leq C_c \N{v}_{1,k} \N{w}_{1,k}
\label{eq:conty} 
\eeq
for all $k\geq k_0$ and $v, w \in H^1(\Omega)$. 
\end{lemma}

We now give a result about the coercivity of $a_\eps$, which is a generalisation of \cite[Lemma 3.1(ii)]{GaGrSp:13}. To state this we need to define $\sqrt{k^2 + \ri \eps}$, taking care to cater for both positive and negative $\eps$. We need to consider both positive and negative $\eps$ since, whichever choice we make for the problem \eqref{2}, the other forms the adjoint problem, and we need estimates on the solutions and sesquilinear forms for both problems (in particular, this is essential for analysing both left- and right-preconditioning).

\begin{definition}\label{def:z}
$z(k,\eps):= \sqrt{k^2 + \ri \eps}$ is defined via the square root with the branch cut on the positive real axis. Note that this definition implies that, when $\eps\neq 0$,
\beq
\label{eq:propsqrt}
\Im(z) > 0, \quad \mathrm{sign}(\eps) \Re(z)>  0, \quad \text{ and }\quad z(k,-\eps) = - \overline{z(k,\eps)}.
\end{equation}
\end{definition}

\begin{proposition}\label{prop:elem}
With $z(k,\eps)$ defined above, for all $k>0$,
\begin{equation}\label{eq:ests4lemma}
\vert z \vert \sim k \quad \text{and} \quad  \frac{\Im(z)}{\vert z\vert} \sim \frac{\vert \eps\vert }{k^2}.
\end{equation}
\end{proposition}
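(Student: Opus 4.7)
The plan is to reduce both assertions to elementary algebra on the real and imaginary parts of $z$. Writing $z = a + \ri b$ and equating real and imaginary parts of $z^2 = k^2 + \ri \eps$, I will obtain the identities
\begin{equation*}
a^2 - b^2 \ = \ k^2, \qquad 2ab \ = \ \eps, \qquad a^2 + b^2 \ = \ |z|^2 \ = \ \sqrt{k^4 + \eps^2},
\end{equation*}
where the last equality is just $|z|^2 = |z^2| = |k^2 + \ri\eps|$. By Definition \ref{def:z} and \eqref{eq:propsqrt}, the chosen branch of the square root forces $b = \Im(z) \geq 0$, with strict inequality whenever $\eps \neq 0$; in particular I may identify the nonnegative square root of $\tfrac12(|z|^2-k^2)$ with $\Im(z)$, independently of the sign of $\eps$.

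For the first estimate I simply use the hypothesis $\veps \lesssim k^2$ to write $k^4 \leq k^4 + \eps^2 \lesssim k^4$, and then take a fourth root to obtain $|z| \sim k$.

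For the second estimate I solve the above linear system for $b^2$: adding and subtracting the first and third identities gives $b^2 = \tfrac12(|z|^2 - k^2)$, whence
\begin{equation*}
\left(\frac{\Im(z)}{|z|}\right)^{\!2} \ = \ \frac{1}{2}\left(1 \ -\ \frac{k^2}{\sqrt{k^4+\eps^2}}\right) \ =\ \frac{\sqrt{1+t}-1}{2\sqrt{1+t}}\ , \qquad t := \frac{\eps^2}{k^4}\ .
\end{equation*}
I then rationalise the numerator via $\sqrt{1+t}-1 = t/(\sqrt{1+t}+1)$ and exploit the fact that on the range $0 \leq t \lesssim 1$ (which is exactly where the assumption $\veps \lesssim k^2$ enters once more) both $\sqrt{1+t}$ and $\sqrt{1+t}+1$ are bounded above and below by positive constants. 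This gives $\bigl(\Im(z)/|z|\bigr)^2 \sim t = \eps^2/k^4$, and extracting a square root yields $\Im(z)/|z| \sim \veps/k^2$, as required.

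There is no genuine obstacle; the only point requiring a little care is the branch bookkeeping, ensuring that the nonnegative quantity $\sqrt{\tfrac12(|z|^2-k^2)}$ obtained from the algebra coincides with $\Im(z)$ both for $\eps>0$ and $\eps<0$. This is handled once and for all by the sign statements in \eqref{eq:propsqrt}.
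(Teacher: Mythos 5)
Your proof is correct and takes essentially the same approach as the paper's: both write out the real and imaginary parts of $z$ from $z^2 = k^2 + \ri\eps$ and use elementary algebra together with the standing hypothesis $|\eps|\lesssim k^2$. The only organizational difference is that you bound $|z|$ directly from $|z|^2 = \sqrt{k^4+\eps^2}$ and then compute the ratio $\bigl(\Im(z)/|z|\bigr)^2$ exactly, whereas the paper first observes $|\Re(z)|\sim k$ from the explicit formula for $\Re(z)^2$ and then uses $2\Re(z)\Im(z)=\eps$ to get $\Im(z)\sim|\eps|/k$; both routes are one or two lines of algebra apart and arrive at the same place.
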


\begin{proof} 
Writing  $z = p+ \ri q$, we see that the definition of $z$ implies that 
\beqs
p= \sqrt{p^2} \,\, \text{if }\,\,\eps>0, \quad p = - \sqrt{p^2} \,\,\text{if}\,\, \eps<0, \quad \text{and} \quad q = \sqrt{q^2} \, \tfa \eps\neq 0, 
\eeqs
where
\beq
p^2= \frac{\sqrt{k^4+\eps^2} + k^2}{2}, \   \quad\mbox{ and } \quad
q^2= \frac{\sqrt{k^4+\eps^2}- k^2 }{2}
\label{eq:formulae} 
\eeq
(and $\sqrt{\cdot}$ denotes the positive real square root).
Using \eqref{eq:est_epseta} we therefore see that $|p|\sim k$. Furthermore, the definition of $z$ implies that $2pq = \eps$, and thus $q =|q|\sim |\eps|/|p| \sim |\eps|/k$. Using  \eqref{eq:est_epseta} again,  
the estimates \eqref{eq:ests4lemma} follow.
\end{proof}

\begin{lemma}[Coercivity]\label{lem:coer}
Let $z= z(k,\eps)$ be as defined in Definition \ref{def:z}, and
choose     $\eta$ in \eqref{eq:Helmholtzvf_intro} to satisfy the inequality
\begin{equation} \Re (\overline{z} \eta) \geq 0 .  \label{eq:condition} \end{equation}
Then there is  
a constant $\rho>0$ independent of $k$ and $\eps$ such that 
\beq\label{eq:coercivity}
\vert a_{\eps}(v,v) \vert \ \geq \ \Im\left( \Theta a_\eps(v,v) \right)\ \geq \ \rho \, \frac{\vert \eps\vert }{k^2}\,  \nrme{v}^2
\eeq
for all $k>0$ and $v\in H^1(\Omega)$,  where $\Theta = -\overline{z}/\vert z \vert$.
\end{lemma}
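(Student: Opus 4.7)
The first inequality is immediate, since $|\Theta| = 1$ and hence $|a_\eps(v,v)| = |\Theta a_\eps(v,v)| \geq |\Im(\Theta a_\eps(v,v))| \geq \Im(\Theta a_\eps(v,v))$. So the substance of the proof lies in the second inequality, and the plan is simply to compute $\Im(\Theta a_\eps(v,v))$ term by term and bound it below.

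The crucial algebraic observation is that $k^2 + \ri \eps = z^2$ by Definition \ref{def:z}, so multiplying the $L^2(\Omega)$ term by $\Theta = -\overline{z}/|z|$ collapses nicely: $-\overline{z}(k^2 + \ri \eps)/|z| = -\overline{z}z^2/|z| = -z|z|$. Thus
\begin{equation*}
\Theta a_\eps(v,v) = -\frac{\overline{z}}{|z|}\N{\nabla v}_{L^2(\Omega)}^2 + z|z|\, \N{v}_{L^2(\Omega)}^2 + \frac{\ri \overline{z}\eta}{|z|} \N{v}_{\LtG}^2.
\end{equation*}
Taking imaginary parts, and using that $\Im(z)>0$ together with $\Im(\ri \overline{z}\eta) = \Re(\overline{z}\eta) \geq 0$ from the hypothesis \eqref{eq:condition}, I can drop the boundary contribution to obtain
\begin{equation*}
\Im(\Theta a_\eps(v,v)) \geq \frac{\Im(z)}{|z|}\N{\nabla v}_{L^2(\Omega)}^2 + \Im(z)\,|z|\, \N{v}_{L^2(\Omega)}^2.
\end{equation*}

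To finish, I invoke Proposition \ref{prop:elem}: we have $\Im(z)/|z| \sim |\eps|/k^2$, and multiplying the second coefficient by $k^2/k^2$ gives $\Im(z)|z| = (\Im(z)/|z|)\cdot|z|^2 \sim (|\eps|/k^2)\cdot k^2$, so the $L^2(\Omega)$ coefficient is equivalent to $(|\eps|/k^2) k^2$. Pulling out the common factor $|\eps|/k^2$ and recombining with the weighted norm $\N{v}_{1,k}^2 = \N{\nabla v}_{L^2(\Omega)}^2 + k^2\N{v}_{L^2(\Omega)}^2$ yields the claimed lower bound with $\rho$ equal to the minimum of the two implicit constants from Proposition \ref{prop:elem}.

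The whole argument is essentially a one-line computation once the correct multiplier $\Theta$ is identified; there is no real obstacle. The only point requiring care is the sign bookkeeping for $\eps<0$: the definition of the branch cut ensures $\Im(z)>0$ in both sign regimes, which is what makes a single multiplier $\Theta$ work uniformly. The role of the hypothesis $\Re(\overline{z}\eta)\geq 0$ is precisely to guarantee that the impedance term contributes with the correct (nonnegative) sign after multiplication by $\Theta$, which is why the natural choice $\eta$ equal to a square root of $k^2+\ri\eps$ (e.g.\ $\eta = z$ itself, giving $\Re(\overline{z}\eta) = |z|^2$) automatically satisfies it.
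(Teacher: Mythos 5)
Your proof is correct and follows essentially the same route as the paper: the paper also multiplies by $\Theta=-\overline{z}/|z|$, expands $a_\eps(v,v)$ using $k^2+\ri\eps=z^2$ (written there as $(p+\ri q)^2$), takes imaginary parts, drops the nonnegative boundary term via \eqref{eq:condition}, and concludes from the estimates \eqref{eq:ests4lemma}. The only cosmetic difference is that you keep $z$ and $\overline{z}$ throughout while the paper carries the real and imaginary parts $p,q$ explicitly; the algebra is identical.
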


\begin{proof}
Writing $z = p+iq$ and using 
the definition of $a_\eps$,  we have 
$$ a_{\eps}(v,v) \ = \ \Vert \gv \Vert_{L^2(\Omega)}^2 - (p+\ri q )^2 \Vert v \Vert_
{L^2(\Omega)}^2  - \ri \eta \Vert v \Vert_{L^2(\Gamma)}^2  \ .
$$
Therefore
\begin{equation*}
\Im \left[ -(p-\ri q)  a_{\eps}(v,v)\right] \  =\  q \Vert \gv \Vert_{L^2(\Omega)}^2 +  
q (p^2+ q^2) \Vert v \Vert_
{L^2(\Omega)}^2  + \Re\left[  (p - \ri q)\eta \right] \Vert v \Vert_{L^2(\Gamma)}^2  \ . 
\end{equation*}
Hence, dividing through by $\vert z \vert = \sqrt{p^2 + q^2} $, and setting 
$\Theta = - \overline{z} /\vert z \vert$,  we have 
\begin{equation*}
\Im \left[ \Theta   a_{\eps}(v,v)\right] \  =\  \frac{\Im(z)}{\vert z\vert} \left[ \Vert \gv \Vert_{L^2(\Omega)}^2 +  
 \vert z \vert^2  \Vert v \Vert_
{L^2(\Omega)}^2\right]  + \frac{\Re\left(\overline{z} \eta \right)}{|z|} \Vert v \Vert_{L^2(\Gamma)}^2  \ .  
\end{equation*}
The result then follows from condition \eqref{eq:condition} and
the second estimate in  \eqref{eq:ests4lemma}. 
\end{proof} 
\begin{remark}[Choices of $\eta$ satisfying \eqref{eq:condition}] \label{rem:obvious} 
An obvious choice of $\eta$ that satisfies the coercivity
  condition \eqref{eq:condition} is $\eta = z$, for then 
$ \Re(\overline{z}\eta) = \Re(\overline{z} z) = \vert z \vert ^2  > 0
$, Another possible choice is $\eta = \mathrm{sign}(\eps) k$, for
then, by \eqref{eq:propsqrt}, we have
$ \Re(\overline{z}\eta) = \mathrm{sign}(\eps) \Re(z) k  >0$.   
Note that both these choices satisfy the condition on $|\eta|$ in \eqref{eq:est_epseta}.

The fact that the choice of $\eta$ for coercivity to hold depends on the sign of $\eps$ is expected, since the sign of $\eps$ also dictates the properties of $\eta$ required for the problem \eqref{2} to be well posed. Indeed, repeating the usual argument involving Green's identity (given for $\eps=0$ in, e.g., \cite[Theorem 6.5]{Sp:15}) we see that if $\eps>0$ we need $\Re(\eta)\geq 0$ for uniqueness and if $\eps<0$ we need $\Re(\eta)\leq 0$.

The condition for coercivity \eqref{eq:condition} is more restrictive that the conditions for uniqueness. Indeed, since $\Re(\overline{z} \eta) = \Re(z) \Re(\eta) + \Im(z) \Im(\eta)$,  
when $\eps> 0$, a sufficient condition to ensure \eqref{eq:condition} is $\Re(\eta) > 0, \ \Im(\eta) \geq 0$. Similarly, when   $\eps<  0$  a 
sufficient condition for \eqref{eq:condition} is   $\Re(\eta) < 0, \ \Im(\eta) \geq 0$.      
\end{remark} 

This lemma immediately also gives us a result about the coercivity of
the  adjoint of $a_\eps$,  given by 
$$ a^*_\eps(u,v) \ = \int_\Omega \gu \cdot \bgv - (k^2 - \ri \eps) \int_\Omega u \bv +  \ri \overline{\eta}  \int_{\GammaN} u\bv. 
$$
\begin{corollary} \label{cor:adjoint}
Under assumption \eqref{eq:condition} we also have coercivity of the
adjoint form:  
\beq\label{eq:adjcoercivity}
\vert a_{\eps}^*(v,v) \vert \ \geq \ \Im\left( \Theta a_\eps(v,v) \right)\ \geq \ \rho \, \frac{\vert \eps\vert }{k^2}\,  \nrme{v}^2
\eeq
for all $k>0$ and $v\in H^1(\Omega)$,  where $\Theta = -\overline{z}/\vert z \vert$.
\end{corollary}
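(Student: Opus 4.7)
The plan is to reduce the corollary to Lemma \ref{lem:coer} via a one-line observation about the adjoint form. Write out the two sesquilinear forms side by side and compute $\overline{a_\eps(v,v)}$: conjugating an integrand replaces $(k^2+\ri\eps)$ by $(k^2-\ri\eps)$ and $-\ri\eta$ by $+\ri\overline{\eta}$, while the term $\int_\Omega |\nabla v|^2$ is real. Comparing with the definition of $a_\eps^*$ in the statement preceding the corollary, this gives the identity
\begin{equation*}
a_\eps^*(v,v) \ = \ \overline{a_\eps(v,v)} \quad \text{for all } v \in H^1(\Omega).
\end{equation*}
Consequently $|a_\eps^*(v,v)| = |a_\eps(v,v)|$.

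With this in hand, the chain of inequalities to be proved,
\begin{equation*}
|a_\eps^*(v,v)| \ \geq\ \Im\!\bigl(\Theta\, a_\eps(v,v)\bigr)\ \geq\ \rho\,\frac{|\eps|}{k^2}\,\|v\|_{1,k}^2,
\end{equation*}
follows at once: the first inequality is just $|a_\eps(v,v)| \geq \Im(\Theta\, a_\eps(v,v))$, which holds trivially (or is recorded as the left-hand estimate of \eqref{eq:coercivity}), and the second is precisely the coercivity estimate established in Lemma \ref{lem:coer}, since the hypothesis \eqref{eq:condition} on $\eta$ is unchanged.

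There is no real obstacle here; the corollary is essentially bookkeeping, reflecting the structural fact that $a_\eps^*$ has the same diagonal values (up to conjugation) as $a_\eps$ and hence inherits coercivity with the same constant $\rho$ and the same rotating factor $\Theta=-\overline{z}/|z|$. The only thing worth being careful about is verifying the identity $a_\eps^*(v,v)=\overline{a_\eps(v,v)}$ with the signs as defined in the paper's convention for the adjoint, which is a direct check.
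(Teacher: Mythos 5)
Your proof is correct and slightly cleaner than the paper's. You observe directly that $a_\eps^*(v,v) = \overline{a_\eps(v,v)}$, so $|a_\eps^*(v,v)| = |a_\eps(v,v)|$, and then invoke Lemma~\ref{lem:coer} verbatim: the middle term $\Im(\Theta\,a_\eps(v,v))$ is unchanged and the chain of inequalities is immediate. The paper instead treats $a_\eps^*$ as another instance of $a_\eps$ with parameters $\widetilde\eps=-\eps$, $\widetilde\eta=-\overline{\eta}$ (so $\widetilde z=-\overline{z}$) and re-checks that the coercivity hypothesis \eqref{eq:condition} is preserved under that substitution, thereby re-applying the lemma to the adjoint form. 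Both routes are one-liners, but yours has two small advantages: it does not require re-verifying the coercivity condition for the new parameters, and it produces the middle term of \eqref{eq:adjcoercivity} in exactly the stated form $\Im(\Theta\,a_\eps(v,v))$ without a further conjugation step (the paper's argument, strictly read, first gives $\Im(\widetilde\Theta\,a_\eps^*(v,v))$ with $\widetilde\Theta=z/|z|$, which equals $\Im(\Theta\,a_\eps(v,v))$ only after an additional short computation). In short: correct, complete, and a mildly more elementary argument than the paper's.
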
 
\begin{proof}
Note that the adjoint form   is
simply a copy of the original form $a_\eps$, but  with parameters $\eps$ and 
$\eta$ replaced by   $\widetilde{\eps} = -\eps$ and  $\widetilde{\eta} = -
\overline{\eta}$, and thus (by \eqref{eq:propsqrt}) $\widetilde{z} = -\overline{z}$.
The condition
for coercivity of the adjoint form is then
$\Re (\overline{\widetilde{z}}\widetilde{\eta} ) \geq0$, which is equivalent to
condition \eqref{eq:condition}.  
\end{proof} 

\begin{remark}\label{rem:Throughout}
Throughout the paper we will always assume that $\eps$ and $\eta$ are  chosen so
that conditions \eqref{eq:est_epseta} and  \eqref{eq:condition} hold, 
and so the forms $a_\eps$ and
$a_\eps^*$ always will satisfy the continuity and coercivity estimates 
\eqref{eq:conty}, \eqref{eq:coercivity} and \eqref{eq:adjcoercivity}.

\end{remark}

\section{Domain Decomposition}

\label{sec:DD}

To define appropriate subspaces of  $\cV^h$,  
we start with a collection of open subsets    $\{ \tOmega_\ell: \ell =
1, \ldots, N\}$ of $\R^d$  that form an overlapping cover of $\overline{\Omega}$, 
and we set $\Omega_\ell =
\tOmega_\ell \cap \overline{\Omega}$. Each $\overline{\Omega}_\ell$  is assumed to be non-empty 
and $\overline{\Omega}_\ell$ is assumed to  consist of a union of
elements of the mesh $\cT_h$. Then, for each $\ell = 1, \ldots, N$,  we set 
$$\cV_\ell = \{ v_h \in \cV^h: \supp(v_h) \subset
\overline{\Omega}_\ell\}. $$  Note that, since functions in $\cV^h$
are continuous, functions in  $\cV_\ell$ must vanish  on the
internal boundary $\partial \Omega_\ell \backslash \Gamma$,
but are unconstrained on the
external boundary $ \partial \Omega_\ell\cap \Gamma$.
The freedoms for  $\cV_\ell$ are denoted   
$\cN^h(\Omega_\ell) = \{x_j:j \in \cI^h(\Omega_\ell)\}$, where $\cI^h(\Omega_\ell)$ is a suitable
index set. The  
basis for
$\cV^h(\Omega_\ell)$ can then be  written   $\{ \phi_j : j \in \cI^h(\Omega_\ell)
\}$.

Thus a solve of the Helmholtz problem \eqref{eq:vf_intro}  in the space $\cV_\ell$ involves a
Dirichlet 
boundary condition at internal boundaries and natural boundary
condition at external boundaries (if any). The introduction of the absorption
$\eps \not = 0$ ensures such solves are always  well-defined.  Future work
will consider  the analysis of methods with other local boundary
conditions (such as impedance or PML).  Internal impedance
conditions are considered in the experiments in \S \ref{sec:Numerical}.  

For      
$j \in \cI^h({\Omega_\ell})$ and $j' \in \cI^h$, we define the
restriction matrix  
$(R_\ell)_{j,j'} := \delta_{j,j'}$. The matrix 
$A_{\eps,\ell} \ := \ R_\ell  A_\eps R_\ell ^T$
is then  just the minor  of 
$A_\eps$ corresponding to rows
and columns taken from $\cI^h({\Omega_\ell})$.  One-level domain
decomposition methods are constructed from the inverses $\matrixAepsell^{-1}$. 
More precisely, 
\begin{equation}\label{eq:minor} 
 B_{\eps, {AS}, local}^{-1} \ : = \  \sum_{\ell}R_\ell^T A_{\eps,\ell}^{-1} R_\ell
\end{equation}
 is the classical
one-level preconditioner for $A_\eps$ with the subscript  ``$local$''
indicating  that the solves are on local subdomains $\Omega_\ell$. 

For the theory, we need  assumptions on the shape of the subdomains and
the size of the overlap, and we require any point in $\overline{\Omega}$ to  belong to a
bounded number of overlapping subdomains.   
First, for simplicity we assume the subdomains are shape-regular
Lipschitz polyhedra (polygons in 2D) of diameter $H_\ell = \mathrm{diam}
  (\Omega_\ell)$, with the volume of order    $\sim H_\ell^d$ and surface
  area 
  $\sim H_\ell^{d-1}$ respectively.  
The coarse mesh diameter $\Hsub : = \max\{  H_\ell : \ell = 1, \ldots,
N\}$ is then a parameter in our estimates.
Each $\Omega_l$ is required to have a large enough interior boundary, i.e.~we require that
\beq\label{eq:int_bound}
|\partial \Omega_l \setminus \Gamma| \sim \Hsub^{d-1} \quad\text{ for each } \, l.
\eeq

Concerning the overlap, for each $\ell = 1, \ldots , N$, let $\mathring{\Omega}_\ell$ denote the
part of $\Omega_\ell$ that is not overlapped by any other subdomains,
and for $\mu>0$ let $\Omega_{\ell, \mu}$ denote the set of points in
$\Omega_\ell$ that are a distance no more than $\mu$ from the
boundary $\partial \Omega_\ell$. Then we assume that for some
$\delta>0$ and some $0<c<1$ fixed, 
  \begin{equation}\label{eq:unifoverlap}\Omega_{\ell, c \delta}\subset \Omega_\ell\backslash
  \mathring{\Omega}_\ell \subset \Omega_{\ell, 
\delta}.\end{equation}
Put more simply, the overlap is assumed to be uniformly of order
$\delta$; 
the case $\delta\sim H$ is called ``generous overlap".
Finally,  we  make the {\em finite
  overlap assumption} 
\begin{equation} 
\#
  \Lambda(\ell) \lesssim 1, \quad \text{where} \quad   
\Lambda(\ell) = \{ \ell' :  \Omega_\ell \cap \Omega_{\ell'} \not =
\emptyset \} \ .   \label{eq:finoverlap}\end{equation}

Two-level methods are obtained by adding a  global coarse solve.  
Let  $\{\cT^{H}\}$ be  a sequence of shape-regular, simplicial meshes on 
$\overline{\Omega}$, \red{with mesh diameter $H$}. We assume that each element of $\cT^H$ consists of the union of a set of 
fine grid elements. The set of 
 coarse mesh nodes is
denoted by $\cI^H$.
The coarse space basis
functions  $\Phi_p$ are taken to be the continuous $P1$ hat functions
 on   $\cT^H$. 

From these functions we define 
the coarse space
$
\cV_0 \ := \ \mathrm{span} \{ \Phi_p : p \in \cI^{H}
\}\ , 
$
which is  a subspace
of $\cV^h$. 
Now, if we introduce the restriction matrix 
\begin{equation}\label{eq:restriction}(R_0)_{pj} \ := \ \Phi_p(x_j^h)\ ,  \quad j \in \cI^h , \quad
p \in \cI^{H},\end{equation}
then the matrix 
\begin{equation}\label{eq:coarsegrid}
\matrixAzeroeps := R_0 \matrixAeps R_0^T\ \end{equation}
is the stiffness matrix for problem (\ref{1})
discretised in $\cV_0$ using the basis $\{\Phi_p: p \in
\cI^{H}\}$.  Note that, due to the coercivity 
result Lemma \ref{lem:coer}, both $\matrixAzeroeps$ and $\matrixA_{\eps,\ell}$ are invertible for all mesh 
sizes $h$ and all choices of  $\epsilon \not = 0$. 
This is easily seen, since,  for example,  if  
$\matrixAepszero \bf v
= \mathbf{0}$, where $\mathbf{v}$ is a vector defined on the freedoms
$\cI^H$,   then
$0 \ = \ \bfv^*\matrixAepszero \bfv \ = \ a_\eps(v_H, v_H) $, 
where $v_H = \sum_{p \in \cI^H} v_p \Phi_p$ and so
$$0 \ = \ \vert a_\eps(v_H, v_H) \vert  \ \geq \ \rho \frac{\vert \eps\vert }{k^2} \Vert v_H\Vert_{1,k}^2, $$
which immediately implies $v_H = 0$, and thus $\bfv =
\bfzero$. Similar arguments apply to $\matrixAepsell$ and to the
adjoints $A_{\eps,\ell}^*$, $\ell = 0, \ldots, N$.  

The classical Additive
Schwarz method is
\begin{equation}\label{eq:defAS}
\matrixB_{\eps,AS}^{-1} \ : = \  \matrixR_0^T \matrixA_{\eps,0}^{-1} \matrixR_0 \ + \  \matrixB_{\eps,AS,local}^{-1} , 
\end{equation}
(i.e.~the sum of  coarse solve and local solves) with
$B_{\eps,AS,local}^{-1}$ defined in \eqref{eq:minor}.  
 
\section{Theory  of Additive Schwarz Methods}
\label{sec:Convergence}
The following theory establishes rigorously the powerful properties of the
preconditioner \eqref{eq:defAS} applied to $A_\eps$ if $\vert \eps\vert $ is   sufficiently  large and $\Hsub$,
$H$ are sufficiently small. 

This theory was inspired by reading again the results in \cite{CaWi:92} where non-self-adjoint problems that were ``close to" self-adjoint coercive problems were considered. Although our problem here is not close to a self-adjoint coercive one, and our technical tools are very different, \cite{CaWi:92} provided a framework that we were able to adapt into the following results.

The first lemma is an extension of the familiar  
``stable splitting'' property of domain decomposition spaces. This is well-known for the $H^1$ norm (see, e.g., \cite{ToWi:05})  but here we extend it 
to  the case of the $k$-weighted energy norm.

\begin{lemma}\label{lem:stable_splitting}
For all $v_h \in \cV^h$ , there exist $v_{\ell} \in \cV_\ell$ 
for each $\ell = 0, \cdots, N$ such that 
\begin{equation}
\label{eq:stable_splitting}
v_h = \sum_{\ell=0}^N v_{\ell} \quad \text{and } \quad 
\sum_{\ell=0}^N \Vert v_{\ell}\Vert_{1,k}^2 \ \lesssim\ \left( 1 + \frac{H}{\delta} \right) \Vert v_h \Vert_{1,k}^2\ .
\end{equation}  
\end{lemma}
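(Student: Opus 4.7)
The plan is to adapt the classical two-level stable splitting argument (as in \cite[Lemma~3.10]{ToWi:05}) from the $H^1$ inner product to the $k$-weighted inner product $(\cdot,\cdot)_{1,k}$. The construction has two stages: first extract a coarse component $v_0 \in \cV_0$, and then split the fine-scale residual $w := v_h - v_0$ among the overlapping local spaces $\{\cV_\ell\}_{\ell=1}^N$ via a partition-of-unity trick.

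First I would take $v_0 \in \cV_0$ to be a Scott--Zhang-type quasi-interpolant of $v_h$. This operator is simultaneously $L^2$- and $H^1$-stable on each coarse element and satisfies the local approximation estimate $\|v_h - v_0\|_{L^2(\tau)} + H\|\nabla(v_h-v_0)\|_{L^2(\tau)} \lesssim H\|\nabla v_h\|_{L^2(\omega_\tau)}$ for every $\tau \in \cT^{H}$, with $\omega_\tau$ the usual coarse patch. Summing over $\tau$, using coarse-mesh shape regularity and the fact that each coarse element is a union of fine elements, one obtains both $\|v_0\|_{1,k} \lesssim \|v_h\|_{1,k}$ and $\|w\|_{1,k} \lesssim \|v_h\|_{1,k}$: the first is immediate from the two stabilities; the second follows by splitting $\|w\|_{1,k}^2 = \|\nabla w\|_{L^2}^2 + k^2\|w\|_{L^2}^2$, treating the gradient piece with the $H^1$-stability and the $L^2$ piece via $k\|w\|_{L^2}\le k\|v_h\|_{L^2}+k\|v_0\|_{L^2}\lesssim k\|v_h\|_{L^2}$, so crucially no condition on $kH$ is required at this step.

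Next I would fix a piecewise-linear partition of unity $\{\chi_\ell\}_{\ell=1}^N$ subordinate to $\{\Omega_\ell\}$ with $0\le \chi_\ell \le 1$, $\sum_\ell \chi_\ell \equiv 1$ on $\overline{\Omega}$, $\mathrm{supp}(\chi_\ell)\subset\overline{\Omega}_\ell$, and $\|\nabla\chi_\ell\|_{L^\infty}\lesssim\delta^{-1}$; such a family exists thanks to the overlap assumption \eqref{eq:unifoverlap}. Setting $v_\ell := I_h(\chi_\ell w) \in \cV_\ell$ with $I_h$ the fine-grid nodal interpolant, linearity and $\sum_\ell \chi_\ell = 1$ at the nodes give $\sum_{\ell=1}^N v_\ell = I_h(w) = w$, yielding $v_h = \sum_{\ell=0}^N v_\ell$. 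Applying the product rule and elementwise stability of $I_h$ (together with the shape-regularity of the fine mesh) produces
\begin{equation*}
\sum_{\ell=1}^N \|v_\ell\|_{1,k}^2 \;\lesssim\; \sum_{\ell=1}^N \Bigl(\|\nabla w\|_{L^2(\Omega_\ell)}^2 + \delta^{-2}\|w\|_{L^2(\Omega_{\ell,\delta})}^2 + k^2 \|w\|_{L^2(\Omega_\ell)}^2\Bigr),
\end{equation*}
at which point the finite-overlap property \eqref{eq:finoverlap} collapses the outer sum, leaving $\|\nabla w\|_{L^2}^2 + k^2\|w\|_{L^2}^2 + \delta^{-2}\sum_\ell\|w\|_{L^2(\Omega_{\ell,\delta})}^2$. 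The first two pieces are $\|w\|_{1,k}^2 \lesssim \|v_h\|_{1,k}^2$.

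The main obstacle is extracting the sharp $(1+H/\delta)$ factor from the residual strip term $\delta^{-2}\sum_\ell \|w\|^2_{L^2(\Omega_{\ell,\delta})}$: the naive bound $\|w\|^2_{L^2}\lesssim H^2\|\nabla v_h\|^2$ only delivers $(1+H^2/\delta^2)$. The classical remedy is a strip estimate of the form $\|w\|^2_{L^2(\Omega_{\ell,\delta})} \lesssim \delta H \|\nabla v_h\|^2_{L^2(\Omega_\ell)}$, which exploits the thinness of $\Omega_{\ell,\delta}$ (measure $\sim \delta H^{d-1}$) together with a trace-and-Poincar\'e-type argument or, equivalently, a discrete Sobolev inequality applied to the piecewise-linear $w$ on the layer of fine elements abutting $\partial\Omega_\ell$. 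This step is the most delicate because it must be carried out so that no hidden factor of $k$ creeps in through the fine- or coarse-mesh constants; fortunately the argument is purely geometric and the $k^2$ weighting only enters through the already-controlled $k^2\|w\|^2$ piece. Combining these ingredients yields $\sum_{\ell=0}^N\|v_\ell\|_{1,k}^2 \lesssim (1+H/\delta)\|v_h\|_{1,k}^2$, as claimed.
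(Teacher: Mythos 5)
Your construction — a coarse quasi-interpolant $v_0$, a partition of unity $\{\chi_\ell\}$ subordinate to the overlapping cover, and $v_\ell = I^h(\chi_\ell(v_h-v_0))$ — is exactly the one the paper uses, and your observation that the $k^2$-weighted $L^2$ piece is controlled without any $(1+H/\delta)$ factor (it is simply $k^2\|w\|_{L^2}^2 \lesssim k^2\|v_h\|_{L^2}^2 \leq \|v_h\|_{1,k}^2$) is precisely the extra ingredient needed to pass from the self-adjoint $H^1$ case to the $k$-weighted norm. The only organizational difference is that the paper avoids re-deriving the strip estimate you sketch in your final paragraph: it cites the known $H^1$-seminorm bound $\sum_{\ell}|v_\ell|_{H^1}^2 \lesssim (1+H/\delta)|v_h|_{H^1}^2$ wholesale from \cite[Theorem 3.8]{GrLeSc:07} and supplements it with a separate, purely local $L^2$ bound $\sum_\ell\|v_\ell\|_{L^2}^2 \lesssim \|v_h\|_{L^2}^2$, then multiplies the latter by $k^2$ and adds; this is shorter but otherwise equivalent to your argument.
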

\begin{proof}
This is adapted from  the proof of analogous results for 
Laplace problems; see, e.g., \cite{ToWi:05}.  The proof starts by approximating $v_h$ by the quasiinterpolant from the coarse space:
$$v_0 : = \sum_{p\in \cI^H} \widehat{v}_p \Phi_p^H $$
where $$\widehat{v}_p = \vert \omega_p\vert^{-1} \int_{\omega_p} v_h \quad \text{and} \quad \omega_p = \mathrm{supp} (\Phi_p^H)\ . $$ 
Then, using the shape regularity of $\cT^H$ it is straightforward to show that 
\begin{equation}
\label{eq:stable5} 
\Vert v_0 \Vert_{L^2(\Omega)} \lesssim \Vert v_h \Vert_{L^2(\Omega)}. 
\end{equation}
Next, we
take  a partition of unity $\{\chi_\ell: \ell = 1, \ldots , N\}$ 
subordinate to the covering 
$\Omega_\ell$ and  set 
\begin{equation}\label{eq:sp2level}
v_\ell = I^h(\chi_\ell(v_h - v_0))\ ,\end{equation} 
where $I^h$ denotes nodal interpolation onto $\cV^h$. 
The first equality in   
\eqref{eq:stable_splitting} follows easily after summation. 
Moreover the estimate
\begin{equation}\sum_{\ell=0}^N \vert v_{\ell}\vert_{H^1(\Omega)}^2 \ \lesssim\ \left( 1 + \frac{H}{\delta} \right) \vert v_h \vert_{H^1(\Omega)}^2\  \label{eq:stable3} 
\end{equation}
is familiar from results on self-adjoint coercive problems; see, e.g.,
 \cite[Theorem 3.8]{GrLeSc:07}.

 To obtain the second inequality in \eqref{eq:stable_splitting},  
 we note that by definition of $I^h$, we have, for any
$\tau \in \cT^h$ with $\tau \subset 
\overline{\Omega_\ell}$, and any  
$x \in \tau$, we have 
\begin{align*}
\vert v_\ell(x) \vert \ = \ & \left\vert \sum_{j \in \cI^h(\tau)}
(\chi_\ell (v_h- v_0))(x_j^h) \phi_j^h(x)\right\vert \ \leq \  \sum_{j \in \cI^h(\tau)} \vert (v_h- v_0)(x_j^h)\vert \\
 \ \lesssim\  & 
\left\{\sum_{j \in \cI^h(\tau)} \vert (v_h- v_0)(x_j^h)\vert^2\right\}^{1/2}
\ \sim \ \vert\tau\vert^{-1/2} \Vert v_h - v_0 \Vert_{L^2(\tau)}, 
\end{align*}
where $\{x_j : j \in \cI^h(\tau)\}$ denotes the nodes on $\tau$.
Hence 
\begin{align}
\Vert v_\ell \Vert_{L^2(\Omega)}^2  \  = \ & \ \sum_{\tau \subset
  \overline{\Omega_\ell}} \int_{\tau} \vert v_\ell\vert^2 \ \leq
\ \sum_{\tau \subset{\overline{\Omega_\ell}}}\vert \tau \vert \Vert v_\ell
\Vert_{L^\infty(\tau)}^2 \nonumber \\ \   \lesssim\  & \sum_{\tau \subset \overline{\Omega_\ell}} \vert \tau\vert \vert \tau\vert^{-1}  \Vert v_h- v_0 \Vert_{L^2(\tau)}^2      \  = \  \Vert v_h- v_0 \Vert_{L^2(\Omega_\ell)}^2 . \label{eq:compare}
\end{align}
Thus,  because of the finite overlap property \eqref{eq:finoverlap}, 
we have 
\begin{equation}
\sum_{\ell = 1}^N \Vert v_\ell \Vert_{L^2(\Omega)}^2  \ 
     \  \lesssim  \  \Vert v_h - v_0 \Vert_{L^2(\Omega)}^2 \ \lesssim \  
\Vert v_h \Vert_{L^2(\Omega)}^2 + \Vert v_0 \Vert_{L^2(\Omega)}^2 \ .  
\label{eq:stable2} \end{equation}
Combination of this with \eqref{eq:stable5} yields 
$\sum_{\ell=0}^N \Vert v_\ell \Vert_{L^2(\Omega)}^2 \lesssim \Vert
v_h \Vert_{L^2(\Omega)}^2 \ .  $  Then multiplication by $k^2$ and
combination with  \eqref{eq:stable3} gives  the required result.  
\end{proof}

The next  lemma is a kind of converse to Lemma \ref{lem:stable_splitting}. 
Here the energy of a sum of components is estimated above by the sum of the energies. 

\begin{lemma}\label{lem:norm_of_sum}
For all choices of $v_{\ell} \in \cV_{\ell}$ ,  
 $\ell = 0, \cdots, N$, we have  
\begin{equation}
\label{eq:norm_of_sum}
\quad \left\Vert \, \sum_{\ell=0}^N  v_{\ell} \, \right\Vert_{1,k}^2 \ \lesssim\ 
\sum_{\ell=0}^N \Vert v_{\ell} \Vert_{1,k}^2\ .
\end{equation}  
\end{lemma}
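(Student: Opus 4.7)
The plan is to exploit the finite overlap property \eqref{eq:finoverlap} together with Cauchy–Schwarz, treating the coarse component $v_0$ separately since it has global support while each $v_\ell$ ($\ell\ge 1$) is supported in $\Omega_\ell$. I would first split
\begin{equation*}
\left\Vert \sum_{\ell=0}^N v_\ell \right\Vert_{1,k}^2 \;\le\; 2\,\Vert v_0 \Vert_{1,k}^2 \;+\; 2\left\Vert \sum_{\ell=1}^N v_\ell \right\Vert_{1,k}^2,
\end{equation*}
so it suffices to bound the second term by $\sum_{\ell=1}^N \Vert v_\ell\Vert_{1,k}^2$.

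For the second term, I would argue pointwise. For $x\in\overline{\Omega}$, the set $S(x):=\{\ell\ge 1 : v_\ell(x)\neq 0\}$ is contained in $\{\ell : x \in \overline{\Omega_\ell}\}$; by \eqref{eq:finoverlap} its cardinality is bounded by a constant $C$ independent of $k$, $h$, $\Hsub$, $H$. Applying Cauchy–Schwarz to the (at most $C$) nonzero terms gives
\begin{equation*}
\left| \sum_{\ell=1}^N v_\ell(x) \right|^2 \;\le\; |S(x)|\sum_{\ell\in S(x)} |v_\ell(x)|^2 \;\lesssim\; \sum_{\ell=1}^N |v_\ell(x)|^2.
\end{equation*}
The same argument applies componentwise to $\nabla \sum_{\ell=1}^N v_\ell = \sum_{\ell=1}^N \nabla v_\ell$ (using that $\nabla v_\ell$ is supported in $\overline{\Omega_\ell}$). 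Integrating both estimates over $\Omega$ and combining yields
\begin{equation*}
\left\Vert \sum_{\ell=1}^N v_\ell \right\Vert_{1,k}^2 \;\lesssim\; \sum_{\ell=1}^N \Vert v_\ell \Vert_{1,k}^2.
\end{equation*}

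Inserting this into the initial splitting produces
\begin{equation*}
\left\Vert \sum_{\ell=0}^N v_\ell \right\Vert_{1,k}^2 \;\lesssim\; \Vert v_0 \Vert_{1,k}^2 + \sum_{\ell=1}^N \Vert v_\ell \Vert_{1,k}^2 \;=\; \sum_{\ell=0}^N \Vert v_\ell \Vert_{1,k}^2,
\end{equation*}
which is exactly \eqref{eq:norm_of_sum}. There is no real obstacle here; the only subtlety worth flagging in the write-up is that the argument is identical for the $L^2$ and $\nabla$ parts (so the $k$-weighting passes through trivially), and that the coarse term must be peeled off first because $v_0$ does not sit inside the finite-overlap structure of the subdomain decomposition.
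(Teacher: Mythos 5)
Your proof is correct, and the route is genuinely (if mildly) different from the paper's. The paper works at the level of the inner product: it expands $\|\sum_{\ell\ge1}v_\ell\|_{1,k}^2$ into a double sum $\sum_\ell\sum_{\ell'}(v_\ell,v_{\ell'})_{1,k}$, notes that the inner product vanishes unless $\ell'\in\Lambda(\ell)$, and then applies Cauchy--Schwarz twice together with $\#\Lambda(\ell)\lesssim 1$; the $v_0$ term is then peeled off at the end via the identity $\|v_0 + w\|_{1,k}^2 = \|v_0\|_{1,k}^2 + 2\Re(v_0,w)_{1,k}+\|w\|_{1,k}^2$ plus Cauchy--Schwarz and AM--GM. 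You instead peel off $v_0$ at the start with $\|a+b\|^2\le 2\|a\|^2+2\|b\|^2$ and then run a \emph{pointwise} Cauchy--Schwarz (the standard ``colouring'' argument): at a.e.\ $x$ at most $\lesssim 1$ of the $v_\ell(x)$ (and $\nabla v_\ell(x)$) are nonzero, so $\left|\sum_\ell v_\ell(x)\right|^2\lesssim\sum_\ell|v_\ell(x)|^2$, and integrating gives the estimate with the $k$-weight passing through trivially. The two are equivalent in substance; yours has the small advantage of being visibly uniform across the $L^2$ and gradient contributions, while the paper's is phrased entirely in terms of the $(\cdot,\cdot)_{1,k}$ inner product, matching the style of the surrounding projection-operator analysis. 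The one point worth being slightly careful about in a write-up of your version is the passage from the finite-overlap hypothesis \eqref{eq:finoverlap}, which is stated for the (relatively open) sets $\Omega_\ell$, to a pointwise count on $\overline{\Omega_\ell}$; since the $v_\ell$ are continuous and vanish on the interior boundaries $\partial\Omega_\ell\setminus\Gamma$, if $v_\ell(x)\ne 0$ and $v_{\ell'}(x)\ne 0$ then both are nonzero on a common open neighbourhood, forcing $\Omega_\ell\cap\Omega_{\ell'}\ne\emptyset$, so the count $\#S(x)\lesssim 1$ does follow. This is a one-line remark, not a gap.
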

\begin{proof}
Let  $\displaystyle{\sum_\ell}$ denote  the sum from $\ell = 1$ to $N$ and  
and recall the notation   $\Lambda(\ell)$ introduced in
\eqref{eq:finoverlap}.  
 Then, using  several applications of the Cauchy-Schwarz inequality,
\begin{align}
\left\Vert \sum_{\ell} v_\ell \right\Vert_{1,k}^2  \ = \ & 
\left(\sum_\ell v_\ell ,  \sum_{\ell'} v_{\ell'}\right)_{1,k}  
\  = \  \sum_\ell \sum_{\ell' \in \Lambda(\ell)}  (v_{\ell} ,
v_{\ell'})_{1,k}   
\nonumber \\
 \  \leq \  & \sum_\ell \Vert v_\ell \Vert_{1,k} \left(\sum_{\ell' \in \Lambda(\ell)}  \Vert v_{\ell'}\Vert_{1,k}\right)\nonumber \\
\  \leq \  & \left(\sum_\ell \Vert v_\ell \Vert_{1,k}^2\right)^{1/2}  
\left(\sum_\ell \left(\sum_{\ell' \in \Lambda(\ell)}  
\Vert v_{\ell'}\Vert_{1,k}
\right)^2 \right)^{1/2} \nonumber \\
\  \leq \  & 
\left(\sum_\ell \Vert v_\ell \Vert_{1,k}^2\right)^{1/2}  
\left(\sum_\ell  \# \Lambda(\ell) \sum_{\ell' \in \Lambda(\ell)}  
\Vert v_{\ell'}\Vert_{1,k}^2 
 \right)^{1/2} 
\  \lesssim\     
\sum_\ell \Vert v_\ell \Vert_{1,k}^2 \ , \label{eq:sum_from_1} 
\end{align}
where we used the finite overlap assumption \eqref{eq:finoverlap}. 
To obtain   \eqref{eq:norm_of_sum}, we
write    
\begin{align}
\left\Vert \sum_{\ell=0}^N v_\ell \right\Vert_{1,k}^2  \  = \ & 
\left(\sum_{\ell=0}^N v_\ell ,  \sum_{\ell=0}^N v_\ell\right)_{1,k} \label{eq:rheq}\\
\ = \ & \Vert v_0\Vert_{1,k}^2 \ + \ 2 \left(v_0, \sum_{\ell} v_\ell\right)_{1,k}
\ + \ \left(\sum_\ell v_\ell ,  \sum_\ell v_\ell\right)_{1,k}. \nonumber
\end{align}
Using  the  Cauchy-Schwarz and the  arithmetic-geometric mean
inequalities on the middle term  we can   estimate \eqref{eq:rheq}
from   above in the form  
$$ \lesssim \ \Vert v_0\Vert_{1,k}^2 + \left\Vert \sum_\ell v_\ell
\right\Vert_{1,k}^2 ,  $$
and the result follows from \eqref{eq:sum_from_1}. 
 \end{proof}

\vspace{0.3cm}

Now for each $\ell = 0, \ldots, N$,  we define  linear operators $Q_{\eps, \ell} : 
H^1(\Omega) \rightarrow \cV_{\ell}$ as follows. 
For each $v_h \in \red{H^1(\Omega)}$, $Q_{\eps,\ell} \red{v}$ is defined to be the unique solution of 
the equation 
\beq 
\label{eq:defQi} 
\aeps (Q_{\eps,\ell} \red{v} , w_{h,l}) \ =\  \aeps (\red{v} , w_{h,\ell}),  \quad w_{h, \ell } \in \Vell. 
\eeq 
and  we then define 
$$
Q_{\eps} = \sum_{\ell=0}^N Q_{\eps,\ell}\ . $$ 
The   matrix
representation of $Q_\eps$ corresponds to the action of the preconditioner \eqref{eq:defAS} on the matrix    $A_\eps$ 
(this will be shown in  Theorem
\ref{thm:repQ} below).
In Theorems \ref{thm:boundQ} and \ref{thm:final} below we estimate  
the norm and field of values of 
$Q_{\eps}$, and this yields corresponding estimates for
the norm and field of values of the preconditioned matrix in
Theorems \ref{thm:final1}.
Such projection analysis
is commonplace in domain decomposition; however, as far as we are aware,
this is the first place where the projection operators are defined
using the $a_\eps$ sesquilinear form and analysed in the wavenumber-dependent  $\Vert \cdot
\Vert_{1,k} $ energy norm. 

\begin{theorem} {{(Upper bound on $Q_{\eps}$)}}
\label{thm:boundQ}
\beqs
\nrme{Q_{\eps} v_h} \ \lesssim \ \left(\ksqeps\right) \nrme{v_h}
\quad \text{for all}\quad  v_h \in \cV^h.
\eeqs
\end{theorem}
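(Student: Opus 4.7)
The plan is to bound $Q_\eps v_h$ by bounding each local component $W_\ell := Q_{\eps,\ell}v_h \in \cV_\ell$ (so that $W := Q_\eps v_h = \sum_{\ell=0}^N W_\ell$), and then recombining via Lemma~\ref{lem:norm_of_sum}. The subtlety is to avoid accumulating a factor of $\sqrt{N+1}$ when summing the local bounds.

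First I would obtain a pointwise-in-$\ell$ estimate. Testing the defining relation \eqref{eq:defQi} with $w_{h,\ell} = W_\ell$ gives $a_\eps(W_\ell, W_\ell) = a_\eps(v_h, W_\ell)$, and then coercivity (Lemma~\ref{lem:coer}) combined with continuity (Lemma~\ref{lem:cont}) yields
\[
\rho\,\frac{|\eps|}{k^2}\,\|W_\ell\|_{1,k}^2 \;\leq\; |a_\eps(W_\ell,W_\ell)| \;=\; |a_\eps(v_h,W_\ell)| \;\leq\; C_c\,\|v_h\|_{1,k}\,\|W_\ell\|_{1,k}.
\]
Next, summing over $\ell$ only the left- and middle-most terms gives
\[
\rho\,\frac{|\eps|}{k^2}\sum_{\ell=0}^N \|W_\ell\|_{1,k}^2 \;\leq\; \sum_{\ell=0}^N |a_\eps(v_h,W_\ell)|.
\]
The key step is handling the right-hand side without a factor of $\sqrt{N+1}$. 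I would introduce unimodular complex scalars $s_\ell \in \mathbb{C}$ with $|s_\ell|=1$ chosen so that $s_\ell\,a_\eps(v_h,W_\ell) = |a_\eps(v_h,W_\ell)|$. Then, by sesquilinearity,
\[
\sum_{\ell=0}^N |a_\eps(v_h,W_\ell)| \;=\; a_\eps\!\Big(v_h,\ \sum_{\ell=0}^N s_\ell W_\ell\Big) \;=\; a_\eps(v_h, Z), \qquad Z := \sum_{\ell=0}^N s_\ell W_\ell.
\]
Since $s_\ell W_\ell \in \cV_\ell$, Lemma~\ref{lem:norm_of_sum} applies to $Z$, giving $\|Z\|_{1,k}^2 \lesssim \sum_\ell \|W_\ell\|_{1,k}^2$, and continuity then yields
\[
|a_\eps(v_h, Z)| \;\leq\; C_c\,\|v_h\|_{1,k}\,\|Z\|_{1,k} \;\lesssim\; \|v_h\|_{1,k}\,\Big(\sum_{\ell=0}^N \|W_\ell\|_{1,k}^2\Big)^{1/2}.
\]
Combining the last two displays and dividing through by the square root yields $\bigl(\sum_\ell \|W_\ell\|_{1,k}^2\bigr)^{1/2} \lesssim (k^2/|\eps|)\,\|v_h\|_{1,k}$.

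Finally, I apply Lemma~\ref{lem:norm_of_sum} to $W = \sum_\ell W_\ell$ itself to get $\|W\|_{1,k}^2 \lesssim \sum_\ell \|W_\ell\|_{1,k}^2$, which combined with the previous display produces the claimed bound. The main obstacle—and the one genuinely new idea—is Step 5, the introduction of the phases $s_\ell$ to convert a sum of $N+1$ absolute values into a single sesquilinear-form evaluation; a naive Cauchy--Schwarz at that step would cost a factor $\sqrt{N+1}$ and degrade the final estimate by a power of $k^2/|\eps|$, losing the sharp dependence asserted in the theorem.
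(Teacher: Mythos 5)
Your proof is correct and takes a genuinely different route from the paper's. The paper proves the bound by invoking the \emph{sharp} form of coercivity, $\Im(\Theta\, a_\eps(v,v)) \gtrsim (|\eps|/k^2)\|v\|_{1,k}^2$, summing the $\R$-linear quantity $\Im(\Theta\, a_\eps(v_h, Q_{\eps,\ell}v_h))$ over $\ell$, and passing the sum inside $a_\eps(v_h,\cdot)$ before estimating; indeed a remark immediately following the theorem asserts that the ``cruder'' estimate $|a_\eps(v,v)| \gtrsim (|\eps|/k^2)\|v\|_{1,k}^2$ ``could not be used to prove Theorem~\ref{thm:boundQ}.'' Your argument shows that this assertion is too strong: by introducing unimodular phases to convert $\sum_\ell |a_\eps(v_h, W_\ell)|$ into a single evaluation $|a_\eps(v_h, Z)|$ with $Z=\sum_\ell s_\ell W_\ell$, you recover the same bound from the cruder inequality alone. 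The only blemish is a conjugation slip: $a_\eps$ is antilinear in its second slot, so $a_\eps(v_h, s_\ell W_\ell)=\overline{s_\ell}\,a_\eps(v_h,W_\ell)$, and you should therefore choose $s_\ell = a_\eps(v_h,W_\ell)/|a_\eps(v_h,W_\ell)|$ (or replace $Z$ by $\sum_\ell \overline{s_\ell}W_\ell$) so that the phases actually align to give $\sum_\ell |a_\eps(v_h,W_\ell)|=a_\eps(v_h,Z)$. With that repair the argument is complete. What your approach buys is modularity: it uses only $|a_\eps(v,v)|\gtrsim (|\eps|/k^2)\|v\|_{1,k}^2$, which is a consequence of coercivity with respect to \emph{any} rotation, rather than the specific structural fact that $v\mapsto \Im(\Theta\,a_\eps(v_h,v))$ is $\R$-linear in the second argument; the paper's approach is slightly shorter because it never needs to define the phases, but both are equally sharp.
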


\begin{proof} 
By the definition of $Q_\eps$ and Lemma \ref{lem:norm_of_sum}, we have 
\begin{equation}\nrme{Q_{\eps}  v_h}^2 \ = \ \left\Vert\sum_{\ell=0}^N
    Q_{\eps,\ell}  v_h\right\Vert_{1,k}^2 \ \lesssim\ 
  \sum_{\ell=0}^N \nrme{Q_{\eps,\ell} v_h}^2\ . \label{eq:641}\end{equation}
Furthermore, by applying Lemma \ref{lem:coer} and the definition 
\eqref{eq:defQi}, we have 
\begin{align*}
\sum_{\ell=0}^N \nrme{\Qepsell v_h}^2 \ \lesssim \ &
\left(\ksqeps\right) \sum_{\ell=0}^N \Im \left(\Theta a_\eps(\Qepsell
v_h,\Qepsell v_h)\right)
\ =  \ 
\left(\ksqeps\right)  \Im \left(\Theta \sum_{\ell=0}^N
  a_{\eps}(v_h,\Qepsell v_h)\right)\ \\
 \ = \ & 
\left(\ksqeps\right)  \Im \left(\Theta  a_{\eps}\left(v_h,\sum_{\ell=0}^N \Qepsell v_h\right)\right)
\ \leq  \ 
\left(\ksqeps\right)  \left\vert  a_{\eps}\left(v_h,\sum_{\ell=0}^N \Qepsell v_h\right)\right\vert\nonumber
\end{align*}
(recalling that  $|\Theta|=1$).
Then, using  Lemma \ref{lem:cont}, and then 
Lemma \ref{lem:norm_of_sum},  
we have 
\begin{align}
\sum_{\ell=0}^N \nrme{\Qepsell v_h}^2 \ \lesssim \ &
\left(\ksqeps\right)  \nrme{v_h} 
\left\Vert \sum_{\ell=0}^N \Qepsell v_h\right\Vert_{1,k}\nonumber \\
\ \lesssim \ &
\left(\ksqeps\right)  \nrme{v_h} \left(\sum_{\ell=0}^N \nrme{\Qepsell v_h}^2\right)^{1/2} \ . \label{eq:642}
 \end{align}
The result follows on combining \eqref{eq:641} with \eqref{eq:642}.
\end{proof}

\begin{remark}  
The  use of the estimate  
$$
 \Im\left( \Theta a_\eps(v,v) \right) \ \gtrsim\  \frac{\vert
   \eps \vert }{k^2}\,  \nrme{v}^2,
$$
which follows from \eqref{eq:coercivity}, is crucial in the proof of Theorem \ref{thm:boundQ}. Indeed, the above proof uses the linearity of the imaginary part of $a(\cdot,\cdot)$ with respect to the 
second argument. The cruder estimate 
$$
\vert a_{\eps}(v,v) \vert \ \gtrsim\ \frac{\vert \eps
  \vert }{k^2}\,  \nrme{v}^2,
$$
which also follows from \eqref{eq:coercivity}, could not be used to
prove Theorem \ref{thm:boundQ}.
\end{remark}

\begin{lemma}\label{lem:lowrestQi}
\begin{align*}
 & \quad \displaystyle{\left( 1 + \frac{H}{\delta} \right)^{1/2}  \left(
  \sum_{\ell=0}^N \Vert Q_{\eps,\ell} v_h \Vert_{1,k}^2 \right)^{1/2} \ \gtrsim \
\frac{\vert \eps \vert }{k^2} \Vert v_h \Vert_{1,k} \ }\quad \tfa v_h \in \cV^h.
\end{align*}
\end{lemma}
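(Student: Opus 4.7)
The plan is to combine the stable splitting of Lemma \ref{lem:stable_splitting} with coercivity, using the Galerkin orthogonality built into the definition of $Q_{\eps,\ell}$ to introduce the projections on the left slot of $a_\eps$.

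First, I would apply Lemma \ref{lem:stable_splitting} to fix a decomposition $v_h = \sum_{\ell=0}^N v_\ell$ with $v_\ell \in \cV_\ell$ satisfying
\beqs
\sum_{\ell=0}^N \Vert v_\ell \Vert_{1,k}^2 \ \lesssim\ \left(1 + \frac{H}{\delta}\right) \Vert v_h \Vert_{1,k}^2 .
\eeqs
Next, by coercivity (Lemma \ref{lem:coer}) applied to $v_h$,
\beqs
\frac{\vert \eps\vert}{k^2}\, \Vert v_h \Vert_{1,k}^2 \ \lesssim\ \vert a_\eps(v_h, v_h)\vert .
\eeqs

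The key step is to rewrite $a_\eps(v_h,v_h)$ using the splitting and the defining property \eqref{eq:defQi} of each $Q_{\eps,\ell}$. Since $v_\ell \in \cV_\ell$, we may take $w_{h,\ell}=v_\ell$ in \eqref{eq:defQi} to obtain $a_\eps(v_h,v_\ell) = a_\eps(Q_{\eps,\ell} v_h, v_\ell)$, and so
\beqs
a_\eps(v_h,v_h) \ = \ \sum_{\ell=0}^N a_\eps(v_h, v_\ell) \ =\ \sum_{\ell=0}^N a_\eps(Q_{\eps,\ell} v_h,\, v_\ell) .
\eeqs
Then continuity of $a_\eps$ (Lemma \ref{lem:cont}) together with two applications of the Cauchy--Schwarz inequality yields
\beqs
\vert a_\eps(v_h,v_h)\vert \ \lesssim\ \sum_{\ell=0}^N \Vert Q_{\eps,\ell} v_h \Vert_{1,k}\, \Vert v_\ell \Vert_{1,k} \ \leq\ \left(\sum_{\ell=0}^N \Vert Q_{\eps,\ell} v_h \Vert_{1,k}^2\right)^{1/2} \left(\sum_{\ell=0}^N \Vert v_\ell \Vert_{1,k}^2\right)^{1/2} .
\eeqs

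Combining these three displays gives
\beqs
\frac{\vert \eps\vert}{k^2}\, \Vert v_h \Vert_{1,k}^2 \ \lesssim\ \left(1+\frac{H}{\delta}\right)^{1/2} \Vert v_h \Vert_{1,k}\, \left(\sum_{\ell=0}^N \Vert Q_{\eps,\ell} v_h \Vert_{1,k}^2\right)^{1/2} ,
\eeqs
and dividing through by $\Vert v_h \Vert_{1,k}$ gives the claim. I do not expect any serious obstacle here: the only subtle point is to verify that $Q_{\eps,\ell}$ appears on the correct (first) slot of $a_\eps$, which is exactly how \eqref{eq:defQi} is set up, so no adjoint manoeuvre is needed.
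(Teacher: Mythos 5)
Your proof is correct and follows essentially the same path as the paper's: stable splitting of $v_h$ (Lemma \ref{lem:stable_splitting}), coercivity, the Galerkin-orthogonality identity $a_\eps(v_h,v_\ell)=a_\eps(Q_{\eps,\ell}v_h,v_\ell)$, continuity, and Cauchy--Schwarz. The only cosmetic difference is that you bound $|a_\eps(v_h,v_h)|$ directly whereas the paper routes through $\Im\!\left[\Theta\, a_\eps(v_h,v_h)\right]$; both are lower-bounded by $\rho\,(|\eps|/k^2)\,\|v_h\|_{1,k}^2$ via Lemma \ref{lem:coer}, and the triangle inequality on the sum $\sum_\ell a_\eps(Q_{\eps,\ell}v_h,v_\ell)$ makes your variant equally valid here (unlike in the proof of Theorem \ref{thm:boundQ}, where the $\Im$ trick is genuinely needed).
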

\begin{proof}
We first recall  the decomposition of $v_h$ as given in Lemma
\ref{lem:stable_splitting}. 
Then, using Lemma \ref{lem:coer}, the definition of $Q_{\eps,\ell}$ and
Lemma \ref{lem:cont}, 
we obtain:
\begin{align*}
\frac{\vert \eps \vert }{k^2} \Vert v_h \Vert_{1,k}^2 \ \lesssim& \ \Im \left[\Theta a_\eps (v_h, v_h)\right] \ = \ \sum_{l=0}^N \Im \left[\Theta a_\eps (v_h, v_l)\right] \\
 =  & \ \sum_{l=0}^N \Im \left[\Theta a_\eps (\Qepsell v_h, v_l)\right] \ \lesssim\  \sum_{l=0}^N \Vert \Qepsell v_h \Vert_{1,k} \Vert v_l\Vert_{1,k} \ . 
\end{align*}
Then applying the Cauchy-Schwarz inequality and Lemma \ref{lem:stable_splitting} yields
\begin{align*}
\frac{\vert \eps\vert }{k^2} \Vert v_h \Vert_{1,k}^2 \ \lesssim& \ 
\left( \sum_{\ell=0}^N \Vert Q_{\eps,\ell} v_h \Vert_{1,k}^2\right)^{1/2}  \left( \sum_{\ell=0}^N 
\Vert v_\ell\Vert_{1,k}^2 \right)^{1/2} 
 \lesssim     \ \left( 1 + \frac{H}{\delta} \right)^{1/2} \,  \left(
  \sum_{\ell=0}^N \Vert Q_{\eps,\ell} v_h \Vert_{1,k}^2\right)^{1/2}  
\Vert v_h\Vert_{1,k}   . 
\end{align*}
\end{proof}


Our next key result (Lemma \ref{lem:estQepsz} below) is an estimate for the $L^2$-error in the coarse space
projection operator $Q_{\eps,0}$; this is crucially needed to get good estimates for the two-grid preconditioner represented by $Q_\eps$. 
In order to prove this result we need to make an assumption about the approximability on the coarse grid of the solution of the adjoint problem. 

\begin{assumption}[Coarse-grid approximability of the adjoint problem]\label{ass:approx}
If $\phi$ is the solution of the adjoint problem 
\begin{subequations}\label{eq:Esub1}
\begin{align}
-\Delta \phi - (k^2 - \ri \eps ) \phi = f \quad \text{on} \quad \Omega, \label{eq:Helm_with_eps}\\
\pdiff{\phi}{n} -  \ri \overline{\eta} \phi=0\quad \text{on} \quad \Gamma\ \label{eq:homogBC},
\end{align}
with $f\in \LtO$, then
\beq\label{eq:ass1}
\inf_{\phi_0\in \cV_0} \N{\phi-\phi_0}_{1,k}\ \lesssim \ kH \left(\frac{k}{|\eps|}\right) \N{f}_{\LtO}.
\eeq
\end{subequations}
\end{assumption}

\begin{theorem}\label{thm:polygon}
Assumption \ref{ass:approx} holds when $\Omega$ is a 2-d convex polygon, $\eta$ satisfies \eqref{eq:condition}, and the coarse grid is as described in \S\ref{sec:DD} (with, in particular, $H$ denoting the mesh diameter).
\end{theorem}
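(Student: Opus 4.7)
The plan is to establish the approximability bound by a familiar three-step recipe: (i) an a priori $H^1$-bound on $\phi$ via coercivity, (ii) a bootstrap to $H^2$-regularity using convexity of the polygon, and (iii) nodal interpolation on $\cT^H$.

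For step (i), I would test the equation for the adjoint problem against $\phi$ itself. Since $a_\eps^*(\phi,v) = (f,v)_{L^2(\Omega)}$ for all $v\in H^1(\Omega)$, Corollary~\ref{cor:adjoint} gives
$$\rho\,\frac{|\eps|}{k^2}\|\phi\|_{1,k}^2 \;\leq\; |a_\eps^*(\phi,\phi)| \;=\; |(f,\phi)_{L^2(\Omega)}| \;\leq\; k^{-1}\|f\|_{L^2(\Omega)}\|\phi\|_{1,k},$$
so $\|\phi\|_{1,k} \lesssim (k/|\eps|)\|f\|_{L^2(\Omega)}$. Note that the target bound $kH(k/|\eps|)\|f\|_{L^2(\Omega)}$ then already follows trivially by taking $\phi_0=0$ whenever $kH\gtrsim 1$, so from here on I assume $kH\lesssim 1$.

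For step (ii), I would rewrite the PDE as $-\Delta\phi = f + (k^2-\ri\eps)\phi$ to obtain
$$\|\Delta\phi\|_{L^2(\Omega)} \;\leq\; \|f\|_{L^2(\Omega)} + (k^2+|\eps|)\|\phi\|_{L^2(\Omega)} \;\lesssim\; \|f\|_{L^2(\Omega)} + k\|\phi\|_{1,k},$$
using $|\eps|\lesssim k^2$ and $\|\phi\|_{L^2(\Omega)}\leq k^{-1}\|\phi\|_{1,k}$. Since $\Omega$ is a convex polygon and $\phi$ satisfies the Robin condition \eqref{eq:homogBC} with $|\overline{\eta}|\lesssim k$, Grisvard's elliptic shift theorem (adapted to the Helmholtz setting as in the analysis accompanying \cite{GaGrSp:13}) yields
$$|\phi|_{H^2(\Omega)} \;\lesssim\; \|\Delta\phi\|_{L^2(\Omega)} + k\|\phi\|_{1,k}.$$
Combined with the step (i) bound, this gives $|\phi|_{H^2(\Omega)}\lesssim (k^2/|\eps|)\|f\|_{L^2(\Omega)}$.

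For step (iii), I would take $\phi_0 = I^H\phi$, the $P1$ nodal interpolant of $\phi$ on $\cT^H$ (well-defined since $\phi\in H^2\hookrightarrow C(\overline{\Omega})$ in 2-d). Standard Lagrange interpolation estimates give
$$|\phi - I^H\phi|_{H^1(\Omega)} \lesssim H|\phi|_{H^2(\Omega)}, \qquad \|\phi - I^H\phi\|_{L^2(\Omega)} \lesssim H^2|\phi|_{H^2(\Omega)},$$
so, invoking $kH\lesssim 1$,
$$\|\phi - I^H\phi\|_{1,k}^2 \lesssim (1+k^2H^2)H^2|\phi|_{H^2(\Omega)}^2 \lesssim H^2|\phi|_{H^2(\Omega)}^2.$$
Inserting the $H^2$-estimate from step (ii) yields
$$\inf_{\phi_0\in\cV_0}\|\phi-\phi_0\|_{1,k} \leq \|\phi - I^H\phi\|_{1,k} \lesssim H\cdot\frac{k^2}{|\eps|}\|f\|_{L^2(\Omega)} = kH\left(\frac{k}{|\eps|}\right)\|f\|_{L^2(\Omega)},$$
which is \eqref{eq:ass1}.

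The main obstacle is step (ii): establishing the shift estimate with \emph{$k$-explicit} constants when the Robin coefficient $\overline{\eta}$ itself scales like $k$, and when the polygonal corners require Grisvard's weighted-Sobolev machinery rather than the smooth $C^{1,1}$ theory. Convexity is essential both to rule out singular corner contributions in the $H^2$-expansion and to ensure that the boundary integration-by-parts terms arising from the curvature/jumps of $\partial\Omega$ have the right sign and can be absorbed; tracking the explicit $k$-dependence through this standard-in-form but delicate-in-constants argument is the technical core of the proof.
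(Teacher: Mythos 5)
Your proposal follows essentially the same route as the paper: coercivity gives the $H^1$-type a priori bound $\|\phi\|_{1,k}\lesssim (k/|\eps|)\|f\|_{L^2(\Omega)}$, convexity plus Grisvard gives $\|\phi\|_{H^2(\Omega)}\lesssim (k^2/|\eps|)\|f\|_{L^2(\Omega)}$ (the paper cites \cite[Lemma~2.12]{GaGrSp:13} for this step), and an interpolation estimate on $\cT^H$ closes the argument. The only minor departure is that you use the nodal Lagrange interpolant, whose $L^2$-estimate scales as $H^2|\phi|_{H^2}$ and therefore forces you to split into the cases $kH\lesssim 1$ and $kH\gtrsim 1$, whereas the paper uses the Scott--Zhang quasi-interpolant with the estimate $\inf_{\phi_0}\|\phi-\phi_0\|_{1,k}\lesssim H\|\phi\|_{H^2(\Omega)}+kH\|\phi\|_{H^1(\Omega)}$, which delivers the target bound directly without a case split; both are correct.
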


\bpf
If $\phi$ satisfies \eqref{eq:Esub1} and $\eta$ satisfies \eqref{eq:condition}, then the coercivity estimate \eqref{eq:adjcoercivity} combined with the Lax--Milgram theorem implies that
\beq\label{eq:E1}
\N{\phi}_{1,k} \ \lesssim \ \frac{k}{\vert \eps \vert } \N{f}_{\LtO}.
\eeq
If $\Omega$ is a convex polygon, the regularity results in \cite{Gr:85} can then be used to show that 
\beq\label{eq:E2}
\N{\phi}_{H^2(\Omega)}\  \lesssim \ \frac{k^2}{\vert \eps\vert } \N{f}_{\LtO};
\eeq
see \cite[Lemma 2.12]{GaGrSp:13}. Now, with $\phi_0$ denoting the Scott-Zhang quasi-interpolant on 
the coarse grid, 
we have 
\beq\label{eq:E3i}
\inf_{\phi_0\in \cV_0} \N{\phi-\phi_0}_{1,k}\ \lesssim\ H \N{\phi}_{H^2(\Omega)} + kH \N{\phi}_{H^1(\Omega)}
\eeq
\cite[Theorem 4.1]{ScZh:90}, and the result \eqref{eq:ass1} follows from combining \eqref{eq:E1}, \eqref{eq:E2}, and \eqref{eq:E3i}.
\epf

\bre[Bounds on the adjoint problem]
(i) In the proof of Theorem \ref{thm:polygon}, we obtained the bound \eqref{eq:E1} from coercivity and the Lax--Milgram theorem. This bound can also be obtained from an argument 
involving Green's identity (with the latter giving better estimates in the case of an inhomogeneous boundary condition); see \cite[Remark 2.5]{GaGrSp:13} (but note that the $\eta$ in (2.3b) of that paper should be $\overline{\eta}$).

(ii) The bounds \eqref{eq:E1} and \eqref{eq:E2} are the best currently-available bounds 
on the solution of \eqref{eq:Esub1} for $\eps\gtrsim k$, but they are not 
optimal when $\eps \ll k$ -- see \cite[Theorem 2.9]{GaGrSp:13}. 
\ere

\bre[Establishing Assumption \ref{ass:approx} for more general domains]\label{rem:approx}
(i) $H^2$-regularity of the Laplacian on convex polyhedra with homogeneous Dirichlet boundary conditions is proved in \cite[Corollary 18.18]{Da:88a}. The analogous result for inhomogeneous Neumann boundary conditions could then be used, following \cite[Lemma 2.12]{GaGrSp:13}, to prove that \eqref{eq:E2} (and thus also Assumption \ref{ass:approx}) holds for the solution of \eqref{eq:Esub1} on convex polyhedra with quasi-uniform meshes.

(ii) When $\Omega$ is a bounded, non-convex Lipschitz polyhedron in $\Rea^d$, $d=2,3$, it is natural to use a sequence of locally-refined meshes. In this case we expect Assumption \ref{ass:approx} to hold where $H$ is replaced by $(1/N)^{1/d}$, where $N$ is the dimension of the subspace (so $(1/N)^{1/d}$ is the largest element diameter). The steps to prove this are outlined in \cite[Assumption 3.7, Remark 3.8]{GaGrSp:13}.
\ere

We now use Assumption \ref{ass:approx} to prove the key lemma on 
the approximation power of $\Qepsz$ measured in the $L^2$-norm on the domain.
 
\begin{lemma}\label{lem:estQepsz} (Estimate for $\Qepsz$)
For all $v \in H^1(\Omega)$, 
\begin{align} 
\Vert (I - \Qepsz) v \Vert_{L^2(\Omega)}  &\lesssim 
 kH \left(\frac{k}{|\eps|}\right)    \nrme{(I - \Qepsz) v}\  .
\label{eq:estQ0Omega}
\end{align}
\end{lemma}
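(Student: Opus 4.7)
The plan is to use a standard Aubin--Nitsche duality argument, adapted to the $k$-weighted norm $\|\cdot\|_{1,k}$ and using the coercivity of the adjoint form together with the approximation Assumption~\ref{ass:approx}.

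Set $w := (I-Q_{\eps,0})v$. The defining relation \eqref{eq:defQi} for $\ell = 0$ gives the Galerkin orthogonality
\[
a_\eps(w,\phi_0) = 0 \quad \text{for all } \phi_0 \in \cV_0.
\]
To produce the $L^2$-norm of $w$ on the left-hand side of a variational equation, I would introduce the dual (adjoint) problem: find $\phi \in H^1(\Omega)$ such that
\[
a_\eps(\psi,\phi) = (\psi,w)_{L^2(\Omega)} \quad \text{for all } \psi \in H^1(\Omega).
\]
Equivalently, $a_\eps^*(\phi,\psi) = (w,\psi)_{L^2(\Omega)}$ for all $\psi$, so $\phi$ is precisely the weak solution of the adjoint boundary value problem \eqref{eq:Esub1} with right-hand side $w \in L^2(\Omega)$. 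By Corollary~\ref{cor:adjoint} and the Lax--Milgram theorem this dual problem is well posed under the standing assumptions of Remark~\ref{rem:Throughout}.

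Next, I would set $\psi = w$ in the dual identity to obtain
\[
\|w\|_{L^2(\Omega)}^2 = a_\eps(w,\phi),
\]
and then invoke Galerkin orthogonality to subtract any coarse-space function:
\[
\|w\|_{L^2(\Omega)}^2 = a_\eps(w,\phi-\phi_0) \quad \text{for every } \phi_0 \in \cV_0.
\]
Applying continuity of $a_\eps$ (Lemma~\ref{lem:cont}) and then Assumption~\ref{ass:approx} (which is precisely the approximability statement for the adjoint problem driven by the $L^2$-source $w$) gives
\[
\|w\|_{L^2(\Omega)}^2 \ \lesssim\ \|w\|_{1,k}\, \inf_{\phi_0 \in \cV_0}\|\phi-\phi_0\|_{1,k} \ \lesssim\ \|w\|_{1,k}\cdot kH\bigl(k/|\eps|\bigr)\,\|w\|_{L^2(\Omega)}.
\]
Dividing through by $\|w\|_{L^2(\Omega)}$ yields \eqref{eq:estQ0Omega}.

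The only subtlety, and the main thing to verify with care, is that the dual problem I write down really is the one to which Assumption~\ref{ass:approx} applies: the form $a_\eps^*$ is of type $a_\eps$ with the replacements $(\eps,\eta)\mapsto(-\eps,-\overline{\eta})$ (as already noted in the proof of Corollary~\ref{cor:adjoint}), and the conditions \eqref{eq:est_epseta}, \eqref{eq:condition} are preserved under this change, so Assumption~\ref{ass:approx} indeed furnishes the required bound $\lesssim kH(k/|\eps|)\|w\|_{L^2(\Omega)}$. Apart from this bookkeeping about signs and the appearance of the factor $k/|\eps|$ coming from the weak $\eps$-dependent coercivity of the adjoint, the argument is the classical duality trick for the $L^2$-error of a Galerkin projection.
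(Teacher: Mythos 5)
Your argument is the same Aubin--Nitsche duality proof that the paper gives: Galerkin orthogonality for $Q_{\eps,0}$, test the adjoint problem (with $L^2$-source $w=(I-Q_{\eps,0})v$) against $w$, subtract an arbitrary coarse-space element, and close with continuity (Lemma~\ref{lem:cont}) plus Assumption~\ref{ass:approx}. Your closing remark about verifying that the adjoint form is of $a_\eps$-type with $(\eps,\eta)\mapsto(-\eps,-\overline{\eta})$ and that the conditions \eqref{eq:est_epseta}, \eqref{eq:condition} are preserved is exactly the bookkeeping the paper relies on (via Corollary~\ref{cor:adjoint}), so the proposal is correct and essentially identical to the paper's proof.
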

\begin{proof}
In the proof, for simplicity,  we write $Q_0$ instead of $\Qepsz$. 
Recall that $Q_0$ is defined by the variational problem
$
a_\eps(Q_0 v,w)= a_\eps(v,w),  \,  \tfa w \in \cV_0,
$
and thus $e_0:= (I-Q_0 )v$ satifies
\beq\label{eq:EGO}
a_\eps(e_0,w)=0 \quad \tfa w\in \cV_0,
\eeq
Let $\phi$ be the 
solution of the adjoint problem 
\begin{align*}
-\Delta \phi - (k^2 - \ri \eps ) \phi &=e_0 \quad \text{on} \quad \Omega, \\
\pdiff{\phi}{n} + \ri \overline{\eta}\phi &= 0 
\quad \text{on} \quad \Gamma.
\end{align*}
Then, for all $w \in H^1(\Omega)$, we have $a_{\eps}(w, \phi) =  
(w, e_0)_{L^2(\Omega)}$. Hence, using \eqref{eq:EGO}, 
we can write
\begin{equation}\label{eq:Q0_1}
\Vert e_0\Vert_{L^2(\Omega)}^2 \ = \  \vert a_{\eps} (e_0,\phi)\vert  \ = \  \vert a_{\eps} (e_0, \phi - \phi_0)\vert 
\end{equation}
for any $\phi_0 \in \cV_0$.
Now, by Assumption \ref{ass:approx}, there exists a $\phi_0 \in \cV_0$ such that 
\beqs
\N{\phi-\phi_0}_{1,k} \lesssim kH \left(\frac{k}{|\eps|}\right)\N{f}_{\LtO}.
\eeqs
Therefore, using this last bound and continuity, we have
\beq\label{eq:third_eps}
\vert a_\eps(e_0,\phi- \phi_0) \vert \lesssim  \Vert e_0\Vert_{1,k} \Vert \phi - \phi_0\Vert_{1,k}
 \lesssim  \Vert e_0\Vert_{1,k}(kH) \left(\frac{k}{\vert \eps\vert }\right)\N{e_0}_{L^2(\Omega)},
\eeq
and combining \eqref{eq:third_eps} and 
\eqref{eq:Q0_1} we obtain \eqref{eq:estQ0Omega}.
\end{proof}

In what follows, we need both the Poincar\'{e}--Friedrichs 
inequality and the trace inequality on  domains $D$ of {\em characteristic 
length scale} $L$. By this we mean that $D$ is assumed to have  diameter $\sim L$, 
surface area $\sim L^{d-1}$ and volume $\sim L^d$.   
The estimates in the next two results are then explicit in $L$ (with the hidden constants independent of $L$).    

\begin{theorem}\label{thm:Poin}
If $D$ is a Lipschitz domain with characteristic length scale $L$,  
then the Poincar\'e-Friedrichs inequality is 
\begin{equation}\label{eq:Poin} 
\Vert v \Vert_{L^2(D)} \ \lesssim \ L \vert v \vert_{H^1(D)},   
\end{equation} 
for all $v\in H^1(D)$ that vanish on a subset of 
$\partial D $ with measure $\sim L^{d-1}$,
and the multiplicative trace inequality is
\begin{equation}\label{eq:Trace} 
\Vert {v} \Vert _{L^2({\partial D })}^2 \ \lesssim \
\left( L^{-1} \Vert {v} \Vert_{L^2(D)} + \vert {v} \vert_{H^1({D})} 
\right) \Vert {v} \Vert_{L^2(D)} \ , \quad \text{for all} \quad  v\in H^1(D) \ .
\end{equation} 
\end{theorem}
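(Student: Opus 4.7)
The natural plan is to reduce both inequalities to their standard versions on a reference-scale domain via a dilation argument, and then track the powers of $L$ that appear upon scaling back.

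More precisely, I would introduce the rescaled domain $\hat{D}=L^{-1}D$, which has diameter, surface area, and volume all of order $1$ by the characteristic-length-scale hypothesis, and is Lipschitz with shape parameters controlled by those of $D$. For any $v\in H^1(D)$, set $\hat{v}(\hat{x})=v(L\hat{x})$, so that $\hat{v}\in H^1(\hat{D})$ and one has the scaling identities
\begin{align*}
\|v\|_{L^2(D)}^2 &= L^d\|\hat{v}\|_{L^2(\hat{D})}^2, \qquad |v|_{H^1(D)}^2 = L^{d-2}|\hat{v}|_{H^1(\hat{D})}^2,\\
\|v\|_{L^2(\partial D)}^2 &= L^{d-1}\|\hat{v}\|_{L^2(\partial \hat{D})}^2 .
\end{align*}
On $\hat{D}$, which is a fixed Lipschitz domain of unit size, the classical Poincar\'e--Friedrichs inequality applies to functions vanishing on a subset of $\partial\hat{D}$ of positive surface measure (which is precisely the image of the vanishing set under rescaling, with measure $\sim 1$), giving $\|\hat{v}\|_{L^2(\hat{D})}\lesssim |\hat{v}|_{H^1(\hat{D})}$. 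Substituting the scaling identities and solving for $\|v\|_{L^2(D)}$ yields \eqref{eq:Poin}.

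For the multiplicative trace inequality I would likewise invoke the standard form
\begin{equation*}
\|\hat v\|_{L^2(\partial \hat D)}^2 \lesssim \bigl(\|\hat v\|_{L^2(\hat D)}+|\hat v|_{H^1(\hat D)}\bigr)\|\hat v\|_{L^2(\hat D)}
\end{equation*}
on $\hat{D}$ (this is a standard consequence of the divergence theorem applied to $|\hat v|^2$ paired with a smooth vector field whose normal component is bounded below on $\partial\hat D$). Substituting the scaling identities above and multiplying through by $L^{d-1}$ produces exactly \eqref{eq:Trace}.

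The only subtle point is to verify that the hidden constants from the unit-scale Poincar\'e--Friedrichs and trace inequalities on $\hat D$ are indeed independent of $L$. This is where the strength of the hypothesis ``characteristic length scale $L$'' is used: by assuming the three geometric quantities (diameter, surface area, volume) simultaneously scale like $L$, $L^{d-1}$, $L^d$, the family of rescaled domains $\hat D$ is uniformly Lipschitz and shape-regular, so a single constant from the reference geometry suffices. The vanishing set in the Poincar\'e--Friedrichs statement needs the same care: the assumption that it has measure $\sim L^{d-1}$ scales to measure $\sim 1$ on $\partial\hat D$, which is exactly what the standard Poincar\'e--Friedrichs inequality on $\hat D$ requires.
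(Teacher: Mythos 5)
Your argument is essentially the paper's own: the paper simply cites the Ne\v{c}as and Grisvard references for the unit-scale Poincar\'e--Friedrichs and multiplicative trace inequalities and then says ``a scaling argument then yields'' the stated estimates, which is precisely the dilation-and-power-counting you carry out explicitly. You have filled in the details (the scaling identities, the uniform shape-regularity of the rescaled family, the scaling of the vanishing set) correctly, so the proposal is sound and matches the paper's proof.
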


\begin{proof}
For domains of size $\cO(1)$, \eqref{eq:Poin}  is proved in, e.g., \cite[Theorem 1.9]{Ne:67}, and \eqref{eq:Trace} is proved in 
\cite[Last equation on p. 41]{Gr:85}. A scaling argument then yields \eqref{eq:Poin} and \eqref{eq:Trace}.
\end{proof}

Combining \eqref{eq:Poin} and \eqref{eq:Trace} we obtain the following corollary.

\begin{corollary}
If $D$ is a Lipschitz domain with characteristic length scale $L$ and $v$ vanishes on a subset of $\partial D $  of measure $\sim L^{d-1}$, then 
\begin{equation}\label{eq:Poin_and_Trace} 
\Vert {v} \Vert _{L^2(\partial D)}\ \lesssim \ L^{1/2} |u|_{H^1(D)}\ . 
\eeq
\end{corollary}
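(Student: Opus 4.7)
The plan is to simply chain the two estimates from Theorem \ref{thm:Poin} together. Start from the multiplicative trace inequality \eqref{eq:Trace}, which, for $v \in H^1(D)$, gives
\beqs
\Vert v \Vert_{L^2(\partial D)}^2 \ \lesssim \ \left(L^{-1} \Vert v \Vert_{L^2(D)} + |v|_{H^1(D)}\right)\Vert v \Vert_{L^2(D)}.
\eeqs
Since $v$ vanishes on a subset of $\partial D$ of measure $\sim L^{d-1}$, the Poincar\'e--Friedrichs inequality \eqref{eq:Poin} applies and yields $\Vert v \Vert_{L^2(D)} \lesssim L |v|_{H^1(D)}$, and consequently $L^{-1}\Vert v \Vert_{L^2(D)} \lesssim |v|_{H^1(D)}$. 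Substituting both of these bounds into the right-hand side above gives
\beqs
\Vert v \Vert_{L^2(\partial D)}^2 \ \lesssim \ |v|_{H^1(D)} \cdot L\, |v|_{H^1(D)} \ = \ L\, |v|_{H^1(D)}^2,
\eeqs
and the claim follows after taking square roots.

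There is no real obstacle here; the only points requiring (minor) care are (a) the hypothesis that $v$ vanishes on a sufficiently large subset of $\partial D$ is needed precisely to invoke the scaled Poincar\'e--Friedrichs inequality from Theorem \ref{thm:Poin}, and (b) one should verify that the implicit constants depend only on the shape-regularity parameters of $D$ and not on the length scale $L$, which is already guaranteed by the $L$-explicit formulation of \eqref{eq:Poin} and \eqref{eq:Trace}.
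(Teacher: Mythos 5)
Your proof is correct and follows the same route the paper intends: the paper's own "proof" consists of the single sentence "Combining \eqref{eq:Poin} and \eqref{eq:Trace} we obtain the following corollary," and your argument fills in precisely that combination — applying the trace inequality, then substituting the Poincar\'e--Friedrichs bound for $\Vert v\Vert_{L^2(D)}$, and taking square roots. (One small remark: the statement as printed has a typo $|u|_{H^1(D)}$ where $|v|_{H^1(D)}$ is meant, and your write-up correctly uses $v$ throughout.)
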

At various places in this paper we make use of the simple ``Cauchy inequality'': 
\beq\label{eq:Cauchy}
2ab \leq \delta a^2 + \frac{b^2}{\delta}, \quad a,b,\delta>0.
\eeq  
In particular, using this (with $\delta = 1$)  and   
the multiplicative trace inequality \eqref{eq:Trace}, we obtain another corollary to Theorem \ref{thm:Poin}.
\begin{corollary}
\label{cor:tracek} If $D$ is a Lipschitz domain (with characteristic length scale $\cO(1)$) then
\beq\label{eq:mult_trace}
k^{1/2}\Vert {v} \Vert_{L^2(\partial D)}\ \lesssim \ \N{v}_{1,k}\ , \quad \text{for all}  \quad v\in H^1(D) \quad \text{and}  \quad  k \geq 1 .
\eeq   
\end{corollary}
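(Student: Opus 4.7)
The plan is a direct calculation starting from the multiplicative trace inequality \eqref{eq:Trace} applied with $L\sim 1$, which gives
\begin{equation*}
\Vert v \Vert_{L^2(\partial D)}^2 \ \lesssim \ \Vert v \Vert_{L^2(D)}^2 \ + \ |v|_{H^1(D)}\,\Vert v \Vert_{L^2(D)}.
\end{equation*}
Multiplying both sides by $k$, I would treat the two resulting terms separately.

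For the first term, I use the assumption $k \geq 1$ to write $k\Vert v \Vert_{L^2(D)}^2 \leq k^2 \Vert v \Vert_{L^2(D)}^2 \leq \Vert v \Vert_{1,k}^2$. For the second term (the mixed product), I would apply the Cauchy inequality \eqref{eq:Cauchy} with $a = |v|_{H^1(D)}$, $b = k\Vert v\Vert_{L^2(D)}$ and $\delta = 1$, giving
\begin{equation*}
k\,|v|_{H^1(D)}\,\Vert v\Vert_{L^2(D)} \ \leq \ \tfrac{1}{2}\bigl(|v|_{H^1(D)}^2 + k^2\Vert v\Vert_{L^2(D)}^2\bigr) \ = \ \tfrac{1}{2}\Vert v\Vert_{1,k}^2.
\end{equation*}
Summing these two bounds gives $k \Vert v\Vert_{L^2(\partial D)}^2 \lesssim \Vert v\Vert_{1,k}^2$, and taking square roots yields the claim.

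Since every ingredient is already stated as a lemma/corollary in the excerpt, there is no real obstacle; the only thing to be careful about is using $k\geq 1$ to dominate the lower-order $L^2(D)$ term by the $k$-weighted norm, which is precisely why that hypothesis is required in the statement.
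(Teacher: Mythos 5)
Your proof is correct and matches the paper's approach exactly: the paper likewise derives this corollary by combining the multiplicative trace inequality \eqref{eq:Trace} (with $L\sim 1$) with the Cauchy inequality \eqref{eq:Cauchy} using $\delta=1$, together with $k\geq1$ to absorb the lower-order term. The paper simply does not write out the intermediate steps you spelled out.
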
   
  
Our goal for the rest of the section is to bound the field of values 
$ (v_h, Q_\eps v_h)_{1,k}/ \|v_h\|_{1,k}^2$ away from the origin in the complex plane. 
(Note that the field of values is computed with respect 
to the $(\cdot, \cdot)_{1,k} $ inner product.)  We do this by 
 estimating  $\vert(v_h, Q_\eps v_h)_{1,k}\vert$ below by $\sum_{l=0}^N\|Q_{\eps,l}v_h\|^2_{1,k}$ plus ``remainder" terms (which turn out to be higher order, i.e.~bounded by a positive power of $H$ or $\Hsub$), and then use Lemma \ref{lem:lowrestQi} to bound the sum below by $\N{v_h}_{1,k}^2$. Lemma \ref{lem:combined} 
sets up the ``remainder" terms, $R_{\eps,\ell}(v_h)$, Lemmas \ref{lem:coarse} and \ref{lem:local} estimate these, and the final result is then given in Theorem \ref{thm:final}. 

\begin{lemma}\label{lem:combined}  For $\ell = 0, \ldots, N$, set
\beq\label{eq:E3}
R_{\eps,\ell} (v_h) \ : = \ \left((I - Q_{\eps, \ell})v_h, Q_{\eps,\ell} v_h \right)_{1,k}.
\eeq
Then
\begin{align}
 (v_h, Q_\eps v_h )_{1,k} 
=  \sum_{\ell = 0}^N
\left\{ \Vert Q_{\eps, \ell} v_h \Vert_{1,k}^2 + R_{\eps,\ell} (v_h) \right\}. \label{eq:E4ii}
\end{align}
Furthermore, $R_{\eps,\ell}$ satisfies 
\beq\label{eq:boundE}
\vert R_{\eps,\ell}(v_h) \vert   \ \lesssim \ D_{\eps,\ell}(v_h)  +  B_{\eps,\ell}(v_h), 
\eeq
where  the ``domain'' and ``boundary'' contributions to the bound are given by  
\begin{align}\label{eq:Ddef}
D_{\eps,\ell}(v_h) &=  k^2 \Vert (I- Q_{\eps, \ell})v_h\Vert_{L^2(\Omega_\ell)} \Vert Q_{\eps, \ell}v_h\Vert_{L^2(\Omega_\ell)},\\
 B_{\eps,\ell}(v_h) & =  k \Vert (I- Q_{\eps, \ell})v_h\Vert_{L^2(\Gamma_\ell)} \Vert Q_{\eps, \ell}v_h\Vert_{L^2(\Gamma _\ell)}.\label{eq:Bdef}
\end{align}
and $\Omega_0 = \Omega$, $\Gamma_0 = \Gamma$, and $\Gamma_\ell = \Gamma\cap \partial \Omega_\ell$, for $\ell = 1, \ldots, N$.
\end{lemma}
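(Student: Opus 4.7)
The identity \eqref{eq:E4ii} is essentially algebraic. Since $Q_\eps = \sum_{\ell=0}^N Q_{\eps,\ell}$, sesquilinearity of $(\cdot,\cdot)_{1,k}$ gives
\[
(v_h,Q_\eps v_h)_{1,k} \;=\; \sum_{\ell=0}^N (v_h, Q_{\eps,\ell}v_h)_{1,k},
\]
and then splitting $v_h = Q_{\eps,\ell}v_h + (I-Q_{\eps,\ell})v_h$ inside each summand produces $\|Q_{\eps,\ell}v_h\|_{1,k}^2 + R_{\eps,\ell}(v_h)$; summation yields \eqref{eq:E4ii}. So the only real work is proving the bound \eqref{eq:boundE} for $R_{\eps,\ell}(v_h)$.

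The key observation is that the "natural" orthogonality associated with $Q_{\eps,\ell}$ is not with respect to $(\cdot,\cdot)_{1,k}$ but with respect to the sesquilinear form $\aeps$. Indeed, the defining relation \eqref{eq:defQi} gives the Galerkin orthogonality
\[
\aeps\bigl((I-Q_{\eps,\ell})v_h, w_{h,\ell}\bigr) \;=\; 0 \quad \text{for all } w_{h,\ell}\in \cV_\ell,
\]
and since $Q_{\eps,\ell}v_h \in \cV_\ell$ we may choose $w_{h,\ell} = Q_{\eps,\ell}v_h$. The plan is to subtract this zero from $R_{\eps,\ell}(v_h)$. A direct comparison of the definitions of $(\cdot,\cdot)_{1,k}$ and $a_\eps$ shows that their Dirichlet (gradient) contributions are identical, so those terms cancel; what remains is
\[
R_{\eps,\ell}(v_h) \;=\; (2k^2 + \ri \eps)\int_{\Omega_\ell}(I-Q_{\eps,\ell})v_h\,\overline{Q_{\eps,\ell}v_h} \;+\; \ri\eta \int_{\Gamma_\ell}(I-Q_{\eps,\ell})v_h\,\overline{Q_{\eps,\ell}v_h}.
\]
Here, for $\ell \geq 1$, the $L^2$-integrals over $\Omega$ and $\Gamma$ reduce to integrals over $\Omega_\ell$ and $\Gamma_\ell$ respectively because $Q_{\eps,\ell}v_h$ vanishes off $\overline{\Omega}_\ell$; for $\ell = 0$, the full domain and boundary are automatically consistent with the convention $\Omega_0=\Omega,\,\Gamma_0=\Gamma$.

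Finally, applying Cauchy--Schwarz to each of the two integrals, and using the assumption \eqref{eq:est_epseta} (whence $|2k^2+\ri\eps| \le 2k^2 + |\eps| \lesssim k^2$ and $|\eta| \lesssim k$), gives
\[
|R_{\eps,\ell}(v_h)| \;\lesssim\; k^2 \|(I-Q_{\eps,\ell})v_h\|_{L^2(\Omega_\ell)}\|Q_{\eps,\ell}v_h\|_{L^2(\Omega_\ell)} + k \|(I-Q_{\eps,\ell})v_h\|_{L^2(\Gamma_\ell)}\|Q_{\eps,\ell}v_h\|_{L^2(\Gamma_\ell)},
\]
which is exactly \eqref{eq:boundE} with $D_{\eps,\ell}$ and $B_{\eps,\ell}$ as in \eqref{eq:Ddef}--\eqref{eq:Bdef}. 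There is no significant obstacle; the only mildly delicate bookkeeping is verifying that the support properties of $Q_{\eps,\ell}v_h$ correctly collapse the integrals onto $\Omega_\ell$ and $\Gamma_\ell$ for $\ell \geq 1$, and that the choice $w_{h,\ell} = Q_{\eps,\ell}v_h$ is indeed an admissible test function in \eqref{eq:defQi}.
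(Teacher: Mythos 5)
Your proof is correct and follows essentially the same route as the paper: establish \eqref{eq:E4ii} by sesquilinearity and the splitting $v_h = Q_{\eps,\ell}v_h + (I-Q_{\eps,\ell})v_h$, then rewrite $R_{\eps,\ell}$ using the identity $(u,w)_{1,k} = a_\eps(u,w) + (2k^2+\ri\eps)(u,w)_{L^2(\Omega)} + \ri\eta(u,w)_{L^2(\Gamma)}$, kill the $a_\eps$-term via Galerkin orthogonality from \eqref{eq:defQi}, localise to $\Omega_\ell,\Gamma_\ell$ by the support of $Q_{\eps,\ell}v_h$, and finish with Cauchy--Schwarz and \eqref{eq:est_epseta}. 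No gaps.
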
 

\begin{proof}
By the definition of $Q_\eps$,
\beqs
(v_h, Q_\eps v_h )_{1,k} = \sum_{\ell = 0}^N 
\left(v_h, Q_{\eps, \ell} v_h \right)_{1,k}= \sum_{\ell = 0}^N
\left\{ \Vert Q_{\eps, \ell} v_h \Vert_{1,k}^2 + \left((I - Q_{\eps, \ell})v_h, Q_{\eps,\ell} v_h \right)_{1,k} \right\} \ ,  
\eeqs
yielding  \eqref{eq:E4ii}. To obtain \eqref{eq:boundE}, we recall that.
\beqs
(u,v)_{1,k} = a_\eps(u,v) + (2k^2 + \ri \eps) (u,v)_{L^2(\Omega)} + \ri \eta (u,v)_{\LtG}.
\eeqs
Then, since
\beqs
a_\eps((I - Q_{\eps, \ell})v_h, Q_{\eps,\ell} v_h)=0
\eeqs
(from the definition of $Q_{\eps,\ell}$ \eqref{eq:defQi}),
we have 
\beqs
R_{\eps,\ell} (v_h) =(2k^2 + \ri \eps)  ((I - Q_{\eps, \ell})v_h, Q_{\eps,\ell} v_h)_{L^2(\Omega_\ell)}
+ \ri \eta  ((I - Q_{\eps, \ell})v_h, Q_{\eps,\ell} v_h)_{L^2(\Gamma_\ell)},
\eeqs
where we have also used the fact that $Q_{\eps, \ell}v_h$ has support only on $\Omega_l$. 
The desired ``domain" and ``boundary" estimates \eqref{eq:Ddef} and \eqref{eq:Bdef} then follow after using \eqref{eq:est_epseta}. 
\end{proof}

We now bound $D_{\eps,\ell}(v_h)$ and $B_{\eps,\ell}(v_h)$, using the following strategy. 
 First Lemma \ref{lem:coarse}  
bounds  $D_{\eps,0}(v_h)$ in terms of  a positive power of $H$, which  
is obtained by using Lemma \ref{lem:estQepsz} to  estimate the $\|(I-Q_{\eps,0})v_h\|_{L^2(\Omega)}$ component of $D_{\eps,0}(v_h)$.
Then,        
in Lemma \ref{lem:local}  we 
bound  $\sum_{\ell =1}^N D_{\eps,\ell}(v_h)$ in terms of  a positive power of $\Hsub$,  by 
applying  the Poincar\'e--Friedrichs inequality \eqref{eq:Poin}
to each of  the  $\|Q_{\eps,\ell}v_h\|_{L^2(\Omega_\ell)}$ terms in this sum. 
These two lemmas also provide  bounds on $B_{\eps,0}(v_h)$ and $\sum_{\ell =1}^N B_{\eps,\ell}(v_h)$, respectively, where similar  ideas  are used, except this time in conjunction with trace inequalities.  Recalling  that $H$ is the coarse mesh diameter and  $\Hsub$ the subdomain diameter, we are then able  to control the error terms by making $H$ and $ \Hsub$ sufficiently small    (Theorem \ref{thm:final}); it turns out that the required condition on $H$ is more stringent than that on $\Hsub$.

\begin{lemma}[Bounds on $D_{\eps,0}$ and $B_{\eps,0}$]\label{lem:coarse}
For any $\alpha,\alpha'\geq 0$ and any $v_h\in \cV^h$,
\beq\label{eq:E5}
D_{\eps,0}(v_h) \lesssim k H \kseb \left[ \kseb^\alpha \N{Q_{\eps,0}v_h}^2_{1,k} + \kseb^{-\alpha}\N{v_h}^2_{1,k}\right],
\eeq
\beq\label{eq:E5a}
B_{\eps,0}(v_h) \lesssim (kH)^{1/2} \kseb^{1/2} \left[ \kseb^{\alpha'} \N{Q_{\eps,0}v_h}^2_{1,k} + \kseb^{-\alpha'}\N{v_h}^2_{1,k}\right],
\eeq
and thus (taking $\alpha'=\alpha$) 
\begin{align}\nonumber
&D_{\eps,0}(v_h)+B_{\eps,0}(v_h) \\
&\lesssim  (kH)^{1/2} \kseb^{1/2}\left[1+  (kH)^{1/2} \kseb^{1/2}\right] \left[ \kseb^{\alpha} \N{Q_{\eps,0}v_h}^2_{1,k} + \kseb^{-\alpha}\N{v_h}^2_{1,k}\right].\label{eq:E10}
\end{align}
\end{lemma}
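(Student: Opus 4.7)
\textbf{Proof plan for Lemma~\ref{lem:coarse}.}
The strategy is to treat the two estimates \eqref{eq:E5} and \eqref{eq:E5a} in parallel, in each case producing a bound of the form ``prefactor $\times \N{e}_{1,k}\N{Q_{\eps,0}v_h}_{1,k}$'' (with $e=(I-Q_{\eps,0})v_h$), and then finishing with the triangle inequality $\N{e}_{1,k}\le \N{v_h}_{1,k}+\N{Q_{\eps,0}v_h}_{1,k}$ and the weighted Cauchy inequality \eqref{eq:Cauchy} with $\delta=\kseb^{\alpha}$ (respectively $\kseb^{\alpha'}$). The estimate \eqref{eq:E10} will then follow by simply adding \eqref{eq:E5} and \eqref{eq:E5a} (with $\alpha'=\alpha$) and factoring $(kH)^{1/2}\kseb^{1/2}$ out of the sum $kH\kseb+(kH)^{1/2}\kseb^{1/2}$.

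For the domain term $D_{\eps,0}$, I split the factor $k^2$ as $k\cdot k$. Using the elementary inequality $k\N{w}_{L^2(\Omega)}\le\N{w}_{1,k}$ for the $Q_{\eps,0}v_h$ factor, and Lemma~\ref{lem:estQepsz} for the error factor,
\[
  D_{\eps,0}(v_h) \ \le\ \bigl(k\N{e}_{L^2(\Omega)}\bigr)\cdot\bigl(k\N{Q_{\eps,0}v_h}_{L^2(\Omega)}\bigr)
  \ \lesssim\ kH\kseb\,\N{e}_{1,k}\N{Q_{\eps,0}v_h}_{1,k}.
\]
The triangle and Cauchy steps then deliver \eqref{eq:E5}.

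For the boundary term $B_{\eps,0}$, the factor $\N{Q_{\eps,0}v_h}_{L^2(\Gamma)}$ is handled directly via Corollary~\ref{cor:tracek}, giving $k^{1/2}\N{Q_{\eps,0}v_h}_{L^2(\Gamma)}\lesssim\N{Q_{\eps,0}v_h}_{1,k}$. The real work is bounding $\N{e}_{L^2(\Gamma)}$: I apply the multiplicative trace inequality \eqref{eq:Trace} (with $L\sim 1$) to get
\[
  \N{e}_{L^2(\Gamma)}^2 \ \lesssim\ \N{e}_{L^2(\Omega)}\bigl(\N{e}_{L^2(\Omega)}+\vert e\vert_{H^1(\Omega)}\bigr).
\]
The key observation (which is what gives the correct square-root prefactor rather than a worse one) is that for $k\ge 1$ one has $\N{e}_{L^2(\Omega)}+\vert e\vert_{H^1(\Omega)}\lesssim \N{e}_{1,k}$, while Lemma~\ref{lem:estQepsz} gives $\N{e}_{L^2(\Omega)}\lesssim H\kseb\,\N{e}_{1,k}$. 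Therefore $\N{e}_{L^2(\Gamma)}^2\lesssim H\kseb\,\N{e}_{1,k}^2$, and combining yields
\[
  B_{\eps,0}(v_h)\ \lesssim\ k^{1/2}\,(H\kseb)^{1/2}\,\N{e}_{1,k}\,\N{Q_{\eps,0}v_h}_{1,k}
  \ =\ (kH)^{1/2}\kseb^{1/2}\,\N{e}_{1,k}\N{Q_{\eps,0}v_h}_{1,k}.
\]
Applying the triangle inequality followed by Cauchy with $\delta=\kseb^{\alpha'}$ gives \eqref{eq:E5a}, and \eqref{eq:E10} follows as described above.

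The main obstacle is avoiding a spurious $(1+H\kseb)^{1/2}$ factor when applying the multiplicative trace inequality; the trick is to not bound $\N{e}_{L^2(\Omega)}$ inside both factors of that inequality, but only inside the ``outer'' factor, absorbing the ``inner'' $\N{e}_{L^2(\Omega)}+|e|_{H^1(\Omega)}$ directly into $\N{e}_{1,k}$ via $k\ge 1$. Everything else is a routine combination of Cauchy--Schwarz with the weighted inequality \eqref{eq:Cauchy}, and one must only ensure that $\alpha,\alpha'\ge 0$ so that $\kseb^{\alpha}\ge 1$ (recall $\kseb\gtrsim 1$ by \eqref{eq:est_epseta}) can be used to absorb the ``pure $\N{Q_{\eps,0}v_h}_{1,k}^2$'' term left over from the triangle inequality.
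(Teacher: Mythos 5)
Your proposal is correct and follows essentially the same approach as the paper's proof: both estimates are reduced to bounds of the form $\text{prefactor}\times\N{e}_{1,k}\N{Q_{\eps,0}v_h}_{1,k}$ by applying Lemma~\ref{lem:estQepsz} to (only) one factor, using $k\N{\cdot}_{L^2}\le\N{\cdot}_{1,k}$ and, for the boundary term, the multiplicative trace inequality \eqref{eq:Trace} and Corollary~\ref{cor:tracek}, then finishing with the triangle and weighted Cauchy inequalities and the observation $\kseb\gtrsim 1$. The only difference is purely cosmetic (you symmetrise the splitting of $k^2$ and state the trace step in a squared form rather than a square-rooted form); the substance and all constants agree with the paper.
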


\bpf
For the bound on $D_{\eps,0}(v_h)$, we use \eqref{eq:estQ0Omega}, the triangle inequality, and the Cauchy inequality \eqref{eq:Cauchy} to obtain
\begin{align}\nonumber
D_{\eps,0}(v_h) &\ \lesssim \  kH \kseb\N{(I-Q_{\eps,0})v_h}_{1,k} \N{Q_{\eps,0}v_h}_{1,k},\\
&\lesssim \ kH \kseb \left[
\N{Q_{\eps,0}v_h}^2_{1,k} + \N{Q_{\eps,0}v_h}_{1,k}\N{v_h}_{1,k}\right]\label{eq:E6},\\
&\lesssim \ kH \kseb \left[ \N{Q_{\eps,0}v_h}^2_{1,k} + \kseb^{\alpha} \N{Q_{\eps,0}v_h}_{1,k}^2 + \kseb^{-\alpha}\N{v_h}^2_{1,k}\right]\ , 
\end{align}
for any $\alpha \geq 0 $. Since  $\veps\lesssim k^2$,  \eqref{eq:E5} follows.  

For the bound on $B_{\eps,0}(v_h)$, we apply  the multiplicative trace inequality \eqref{eq:Trace} on $\Omega$  (so $L\sim 1$),  to obtain 
\beqs
B_{\eps,0}(v_h) \lesssim k \N{(I-Q_{\eps,0})v_h}_{\LtO}^{1/2} \N{(I-Q_{\eps,0})v_h}_{\HoO}^{1/2} \N{Q_{\eps,0}v_h}_{\LtG}.
\eeqs
Using \eqref{eq:estQ0Omega}, we then have 
\beqs
B_{\eps,0}(v_h) \lesssim k\, (kH)^{1/2} \left(\frac{k}{\veps}\right)^{1/2}\N{(I-Q_{\eps,0})v_h}_{1,k}^{1/2} \N{(I-Q_{\eps,0})v_h}_{\HoO}^{1/2} \N{Q_{\eps,0}v_h}_{\LtG}
\eeqs
and then using \eqref{eq:mult_trace} and the triangle inequality we obtain
\begin{align*}
B_{\eps,0}(v_h)& \lesssim (kH)^{1/2} \kseb^{1/2}\N{(I-Q_{\eps,0})v_h}_{1,k}\N{Q_{\eps,0}v_h}_{1,k}\\
 &\lesssim (kH)^{1/2} \kseb^{1/2}\left[
 \N{Q_{\eps,0}v_h}^2_{1,k} +  \N{v_h}_{1,k}\N{Q_{\eps,0}v_h}_{1,k}
\right].
\end{align*}
This last inequality is the analogue of \eqref{eq:E6}, and proceeding as before we obtain \eqref{eq:E5a}. 
\epf

\begin{lemma}[Bounds on $\sum D_{\eps,\ell}$, $\sum B_{\eps,\ell}$]\label{lem:local}
For any $\alpha, \alpha'\geq 0$ and any $v_h\in \cV^h$,
\beq\label{eq:E7}
\sum_{\ell= 1}^N D_{\eps,\ell} (v_h)\ \lesssim \ k \Hsub  \left[
\kseb^\alpha \sum_{\ell=1}^N \N{Q_{\eps,l}v_h}^2_{1,k} + \kseb^{-\alpha} \N{v_h}^2_{1,k}\right]
\eeq
and
\beq\label{eq:E8}
\sum_{\ell= 1}^N B_{\eps,\ell} (v_h)\ \lesssim \ k \Hsub  \left[
\kseb^{\alpha'} \sum_{\ell=1}^N \N{Q_{\eps,l}v_h}^2_{1,k} + \kseb^{-\alpha'} \N{v_h}^2_{1,k}\right].
\eeq
Therefore (letting $\alpha'=\alpha$),
\beq\label{eq:E11}
\sum_{\ell= 1}^N \big[ D_{\eps,\ell} (v_h)+ B_{\eps,\ell} (v_h)\big] 
\ \lesssim \ k \Hsub 
\left[
\kseb^{\alpha} \sum_{\ell=1}^N \N{Q_{\eps,l}v_h}^2_{1,k} + \kseb^{-\alpha} \N{v_h}^2_{1,k}\right].
\eeq
\end{lemma}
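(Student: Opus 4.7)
The plan is to prove both \eqref{eq:E7} and \eqref{eq:E8} pointwise first (i.e.\ bound each $D_{\eps,\ell}(v_h)$ and $B_{\eps,\ell}(v_h)$ individually), then sum over $\ell$ using the finite-overlap property \eqref{eq:finoverlap}. The decisive structural fact is that $Q_{\eps,\ell}v_h$ is supported in $\overline{\Omega}_\ell$ and vanishes on the internal boundary $\partial\Omega_\ell\setminus\Gamma$, which by \eqref{eq:int_bound} has measure $\sim \Hsub^{d-1}$; this is exactly the hypothesis needed to apply the Poincar\'e--Friedrichs inequality \eqref{eq:Poin} on $\Omega_\ell$ with characteristic length scale $L\sim\Hsub$. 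Throughout, I denote by $\N{\cdot}_{1,k,\Omega_\ell}$ the obvious localisation of $\N{\cdot}_{1,k}$ to $\Omega_\ell$, so that $\sum_\ell\N{v_h}_{1,k,\Omega_\ell}^2\lesssim\N{v_h}_{1,k}^2$ by \eqref{eq:finoverlap}.

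For \eqref{eq:E7}, I would apply \eqref{eq:Poin} to $Q_{\eps,\ell}v_h$ to get $k\N{Q_{\eps,\ell}v_h}_{L^2(\Omega_\ell)}\lesssim k\Hsub\,|Q_{\eps,\ell}v_h|_{H^1(\Omega_\ell)}\leq k\Hsub\N{Q_{\eps,\ell}v_h}_{1,k}$. For the other factor in $D_{\eps,\ell}(v_h)$ I use the trivial bound $k\N{(I-Q_{\eps,\ell})v_h}_{L^2(\Omega_\ell)}\leq\N{(I-Q_{\eps,\ell})v_h}_{1,k,\Omega_\ell}$ followed by the triangle inequality, which gives $\N{(I-Q_{\eps,\ell})v_h}_{1,k,\Omega_\ell}\leq\N{v_h}_{1,k,\Omega_\ell}+\N{Q_{\eps,\ell}v_h}_{1,k}$. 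Multiplying, this yields
\[
D_{\eps,\ell}(v_h)\ \lesssim\ k\Hsub\bigl[\N{v_h}_{1,k,\Omega_\ell}\N{Q_{\eps,\ell}v_h}_{1,k}+\N{Q_{\eps,\ell}v_h}^2_{1,k}\bigr].
\]
The Cauchy inequality \eqref{eq:Cauchy} with weight $\delta=\kseb^{\alpha}$ splits the cross term in the desired $\alpha$--weighted form, after which summation over $\ell$ and \eqref{eq:finoverlap} produce the right-hand side of \eqref{eq:E7} (the un-weighted $\N{Q_{\eps,\ell}v_h}^2_{1,k}$ contribution is absorbed using $\kseb^\alpha\gtrsim 1$, which follows from \eqref{eq:est_epseta}).

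The proof of \eqref{eq:E8} follows the same template, with the multiplicative trace inequality \eqref{eq:Trace} on $\Omega_\ell$ replacing the $L^2$ control. Applied to $Q_{\eps,\ell}v_h$ and combined with the Poincar\'e bound above, \eqref{eq:Trace} gives $\N{Q_{\eps,\ell}v_h}_{L^2(\Gamma_\ell)}\lesssim\Hsub^{1/2}|Q_{\eps,\ell}v_h|_{H^1(\Omega_\ell)}\leq\Hsub^{1/2}\N{Q_{\eps,\ell}v_h}_{1,k}$. A second application of \eqref{eq:Trace} on $\Omega_\ell$ to $(I-Q_{\eps,\ell})v_h$ (now \emph{without} Poincar\'e, since this function need not vanish anywhere on $\partial\Omega_\ell$), combined with the trivial energy control of the $L^2(\Omega_\ell)$-part, controls $k\N{(I-Q_{\eps,\ell})v_h}_{L^2(\Gamma_\ell)}$ by a scalar multiple of $\N{(I-Q_{\eps,\ell})v_h}_{1,k,\Omega_\ell}$. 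The resulting pointwise bound has the same structural form as the one for $D_{\eps,\ell}$, so triangle inequality, \eqref{eq:Cauchy} with weight $\kseb^{\alpha'}$, and summation via \eqref{eq:finoverlap} deliver \eqref{eq:E8}. The combined estimate \eqref{eq:E11} is then immediate by adding \eqref{eq:E7} and \eqref{eq:E8} and taking $\alpha=\alpha'$.

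The main technical obstacle is the boundary term: one must balance the two trace applications on $\Omega_\ell$ carefully so that the $\Hsub^{1/2}$ gained from Poincar\'e on $Q_{\eps,\ell}v_h$ combines with the contribution from the trace of $(I-Q_{\eps,\ell})v_h$ to yield exactly the clean factor $k\Hsub$ that appears on the right-hand side of \eqref{eq:E8}, rather than an excess power of $\Hsub^{-1}$ or an un-wanted power of $k$. By contrast the volume estimate \eqref{eq:E7} is essentially routine once the Poincar\'e inequality on $Q_{\eps,\ell}v_h$ is in hand. An important point is that the approach deliberately avoids any local analogue of Lemma~\ref{lem:estQepsz}: no sharp $L^2$ bound on $(I-Q_{\eps,\ell})v_h$ is used, only the crude $k\|u\|_{L^2}\leq\N{u}_{1,k}$, which is why the coefficients $kH\kseb$ of Lemma~\ref{lem:coarse} are replaced here by $k\Hsub$ (no $\kseb$ factor).
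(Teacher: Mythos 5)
Your treatment of the domain terms $D_{\eps,\ell}$ is sound and matches the paper's argument (Poincar\'e--Friedrichs on $\Omega_\ell$ applied to $Q_{\eps,\ell}v_h$, triangle inequality, Cauchy with weight $\kseb^\alpha$, then summation with finite overlap). The problem is your boundary estimate. You assert that a local application of \eqref{eq:Trace} on $\Omega_\ell$, ``combined with the trivial energy control of the $L^2(\Omega_\ell)$-part, controls $k\N{(I-Q_{\eps,\ell})v_h}_{L^2(\Gamma_\ell)}$ by a scalar multiple of $\N{(I-Q_{\eps,\ell})v_h}_{1,k,\Omega_\ell}$.'' That inequality is false: writing $w:=(I-Q_{\eps,\ell})v_h$ (which need not vanish anywhere on $\partial\Omega_\ell$), the multiplicative trace inequality \eqref{eq:Trace} on $\Omega_\ell$ (length scale $L\sim\Hsub$) gives
\[
\N{w}_{L^2(\Gamma_\ell)}^2\ \lesssim\ \Hsub^{-1}\N{w}_{L^2(\Omega_\ell)}^2 + \N{w}_{L^2(\Omega_\ell)}\,|w|_{H^1(\Omega_\ell)},
\]
and after multiplying by $k^2$ and using $k\N{w}_{L^2(\Omega_\ell)}\leq\N{w}_{1,k,\Omega_\ell}$ and $|w|_{H^1(\Omega_\ell)}\leq\N{w}_{1,k,\Omega_\ell}$ you obtain $k\N{w}_{L^2(\Gamma_\ell)}\lesssim(\Hsub^{-1}+k)^{1/2}\N{w}_{1,k,\Omega_\ell}$, not $\lesssim\N{w}_{1,k,\Omega_\ell}$. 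The factor $(\Hsub^{-1}+k)^{1/2}\gtrsim\Hsub^{-1/2}$ exactly cancels the $\Hsub^{1/2}$ you gained from \eqref{eq:Poin_and_Trace} applied to $Q_{\eps,\ell}v_h$, leaving \emph{no} positive power of $\Hsub$ in the product, so your argument cannot produce the factor $k\Hsub$ in \eqref{eq:E8}. You correctly identify this balancing as ``the main technical obstacle,'' but the route you sketch does not overcome it.

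The paper avoids ever taking a local trace of $(I-Q_{\eps,\ell})v_h$. Instead it applies the triangle inequality while both factors are still in $L^2(\Gamma_\ell)$, giving $B_{\eps,\ell}(v_h)\lesssim k\big[\N{Q_{\eps,\ell}v_h}_{L^2(\Gamma_\ell)}^2+\N{v_h}_{L^2(\Gamma_\ell)}\N{Q_{\eps,\ell}v_h}_{L^2(\Gamma_\ell)}\big]$; it then uses \eqref{eq:Poin_and_Trace} \emph{only} on the $Q_{\eps,\ell}v_h$ factors (which legitimately vanish on $\partial\Omega_\ell\setminus\Gamma$), gaining $\Hsub$ on the square term and $\Hsub^{1/2}$ on the cross term, and treats the remaining $\N{v_h}_{L^2(\Gamma_\ell)}$ factor only after summing over $\ell$ and applying Cauchy--Schwarz, invoking a trace bound at the global level. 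If you want to salvage your purely local strategy, you must not run a trace estimate on $(I-Q_{\eps,\ell})v_h$; keep that factor as an $L^2(\Gamma_\ell)$ quantity, apply the triangle inequality in $L^2(\Gamma_\ell)$ as the paper does, and only localise where you can invoke the Friedrichs hypothesis, i.e.\ for $Q_{\eps,\ell}v_h$.
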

\begin{proof} Let $\ell = 1, \ldots, N$. 
Recalling both that $Q_{\eps,\ell} v_h $ vanishes on 
$\partial \Omega_\ell \backslash \Gamma$ and the assumption \eqref{eq:int_bound}, we can use the Poincar\'{e} inequality \eqref{eq:Poin} on $\Omega_\ell$, and then use the triangle inequality to obtain
\begin{align*}
D_{\eps,\ell}(v_h)&\ \lesssim \  k^2\Hsub  \N{(I-Q_{\eps,\ell})v_h}_{L^2(\Omega_\ell)} |Q_{\eps,\ell}v_h|_{H^1(\Omega_l)},\\
&\ \lesssim\  k \Hsub  \left[ \N{Q_{\eps,\ell}v_h}_{1,k}^2  + k\N{v_h}_{L^2(\Omega_\ell)} \N{Q_{\eps, \ell}v_h}_{1,k}\right]\ 
\end{align*}
(where the $1,k$-norm is over  the support of 
$Q_{\eps,\ell}v_h$, which is $\Omega_\ell$).
Using  \eqref{eq:Cauchy}
we obtain
\beqs
D_{\eps,\ell}(v_h)\lesssim k \Hsub  \left[ \kseb^{\alpha} \N{Q_{\eps,\ell}v_h}^2_{1,k} + k^2 \kseb^{-\alpha} \N{v_h}^2_{L^2(\Omega_\ell)}\right], 
\eeqs
with $\alpha\geq0$. Summing from $\ell=1$ to $N$, 
and using the finite-overlap property \eqref{eq:finoverlap},  gives \eqref{eq:E7}.

From 
\eqref{eq:Bdef} we have 
\beqs
B_{\eps,\ell}(v_h) \ \lesssim\  k \left[\N{Q_{\eps,\ell}v_h}_{L^2(\Gamma_\ell)}^2+ \N{v_h}_{L^2(\Gamma_\ell)} \N{ Q_{\eps,\ell} v_h}_{L^2(\Gamma_\ell)}\right]
\eeqs
and then using  \eqref{eq:Poin_and_Trace} we have
\beqs
B_{\eps,\ell}(v_h)\ \lesssim \ k \left[ \Hsub |Q_{\eps,\ell}v_h|^2_{H^1(\Omega_\ell)} + \Hsub^{1/2}\N{v}_{L^2(\Gamma_\ell)}|Q_{\eps,\ell}v_h|_{H^1(\Omega_\ell)}\right].
\eeqs
Summing from $\ell=1$ to $N$ we then obtain
\beq\label{eq:E9}
\sum_{\ell=1}^N B_{\eps,\ell}(v_h)\ \lesssim\  k\Hsub \sum_{\ell=1}^N |Q_{\eps,\ell}v_h|^2_{H^1(\Omega_\ell)} + k \Hsub^{1/2}\sum_{\ell=1}^N \N{v_h}_{L^2(\Gamma_\ell)}|Q_{\eps,\ell}v_h|_{H^1(\Omega_\ell)}.
\eeq
(Note that the sums in the last inequality  could  be restricted to those  $\ell$ with 
$\Gamma \cap \partial \Omega_\ell \not = \emptyset$, but this is not used in the following.) 
Using the Cauchy-Schwarz inequality, then  \eqref{eq:Poin_and_Trace} and finally 
 \eqref{eq:Cauchy}, we have 
\begin{align}
k \Hsub^{1/2} \sum_{\ell=1}^N \N{v_h}_{L^2(\Gamma_l)} |Q_{\eps,\ell}v_h|_{H^1(\Omega_\ell)}
&\ \lesssim \ k \Hsub^{1/2} \left( 
\sum_{\ell=1}^N \N{v_h}^2_{L^2(\Gamma_\ell)}\right)^{1/2} \left( 
\sum_{\ell=1}^N |Q_{\eps,\ell}v_h|^2_{H^1(\Omega_\ell)}\right)^{1/2}, \nonumber \\
&\ \lesssim \ k \Hsub \N{v_h}_{1,k}\left( 
\sum_{\ell=1}^N \N{Q_{\eps,\ell}v_h}^2_{1,k}\right)^{1/2}, \nonumber \\
&\ \lesssim \ k \Hsub \left[
\kseb^{\alpha'} \sum_{\ell=1}^N \N{Q_{\eps,\ell}v_h}_{1,k}^2 + \kseb^{-\alpha'} \N{v_h}^2_{1,k}\right].
\label{eq:E9a}
\end{align}
Inserting \eqref{eq:E9a} into \eqref{eq:E9}, we  obtain the result \eqref{eq:E8}.
\end{proof}

\

Our main result in the rest of this section is the following estimate from below on the field of values of $Q_\eps$. 

\begin{theorem}[Bound below on the field of values] \label{thm:final} 
There exists a  constant $\cC_1>0$ 
such that
\beq\label{eq:E12}
\vert (v_h, Q_\eps v_h )_{1,k} \vert \ \gtrsim \ \left(1+ \frac{H}{\delta}\right)^{-1}\left(\frac{\vert \eps\vert }{k^2} \right)^{2}\ \Vert v_h \Vert_{1,k}^2 \ , \quad \text{for all} \quad  v_h \in \cV^h,
\eeq
when 
\beq\label{eq:E20}
\max\left\{ k\Hsub,\  kH \left(1+ \frac{H}{\delta}\right)  \kseb^2 \right\}  \ \leq \  
\cC_1 \left(1 + \frac{H}{\delta}\right)^{-1}  \left(\frac{\vert \eps \vert }{k^2}\right). 
\eeq
\end{theorem}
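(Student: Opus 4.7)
The plan is to combine the identity in Lemma~\ref{lem:combined} with the perturbation estimates in Lemmas~\ref{lem:coarse} and~\ref{lem:local}, and finally pull out a lower bound via Lemma~\ref{lem:lowrestQi}. Specifically, I start from
\[
(v_h, Q_\eps v_h)_{1,k} \;=\; \sum_{\ell=0}^N \N{Q_{\eps,\ell}v_h}^2_{1,k} \;+\; \sum_{\ell=0}^N R_{\eps,\ell}(v_h),
\]
in which the first sum is real and non-negative, while the second is a ``remainder'' to be controlled. Since $|(v_h,Q_\eps v_h)_{1,k}| \geq \Re(v_h,Q_\eps v_h)_{1,k} \geq \sum_\ell \N{Q_{\eps,\ell}v_h}^2_{1,k} - \sum_\ell |R_{\eps,\ell}(v_h)|$, the task reduces to showing that, under the smallness hypothesis~\eqref{eq:E20}, the remainder can be absorbed into a fixed fraction of $\sum_\ell\N{Q_{\eps,\ell}v_h}^2_{1,k}$ plus a term that is strictly dominated by the lower bound from Lemma~\ref{lem:lowrestQi}.

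The crux is choosing the free parameter $\alpha$ appearing in \eqref{eq:E10} and \eqref{eq:E11}. I would take $\alpha = \alpha' = 1$, which symmetrizes the roles of the two factors $\kseb^{\pm\alpha}$ in the $L^2$-Cauchy inequality and produces, after summation,
\[
\sum_{\ell=0}^N |R_{\eps,\ell}(v_h)| \;\lesssim\; \Phi_1 \sum_{\ell=0}^N \N{Q_{\eps,\ell}v_h}_{1,k}^2 \;+\; \Phi_2 \,\N{v_h}_{1,k}^2,
\]
where $\Phi_1 = (kH)^{1/2}\kseb^{3/2} + kH\,\kseb^2 + k\Hsub\,\kseb$ and $\Phi_2 = (kH)^{1/2}\kseb^{-1/2} + kH + k\Hsub\,\kseb^{-1}$. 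The bound~\eqref{eq:E20} on $k\Hsub$ and $kH(1{+}H/\delta)\kseb^2$ is tailored precisely so that (i)~each of the three summands inside $\Phi_1$ is bounded by $C\cC_1^{1/2}$, so $\Phi_1$ can be made $\leq 1/2$ by choosing $\cC_1$ small, and (ii)~each of the three summands inside $\Phi_2$ is bounded by $C\cC_1^{1/2}(1+H/\delta)^{-1}(\eksb)^2$. (Verifying (ii) uses $kH \leq \cC_1 (1+H/\delta)^{-2}(\eksb)^3$, which follows on rearranging the second inequality in~\eqref{eq:E20}.)

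Combining these estimates gives
\[
|(v_h, Q_\eps v_h)_{1,k}| \;\geq\; (1 - \Phi_1)\sum_{\ell=0}^N \N{Q_{\eps,\ell}v_h}_{1,k}^2 \;-\; C\cC_1^{1/2}\left(1+\tfrac{H}{\delta}\right)^{-1}\kseb^{-2}\N{v_h}_{1,k}^2.
\]
Invoking Lemma~\ref{lem:lowrestQi} in the form $\sum_\ell \N{Q_{\eps,\ell}v_h}^2_{1,k} \gtrsim (1+H/\delta)^{-1}(\eksb)^2 \N{v_h}^2_{1,k}$, both terms scale with the same quantity $(1+H/\delta)^{-1}(\eksb)^2 \N{v_h}^2_{1,k}$, and choosing $\cC_1$ small enough that the positive contribution dominates the negative one yields~\eqref{eq:E12}.

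The main obstacle is the tuning of $\alpha$: one must pick it so that the $\kseb^{\alpha}$ factors accompanying $\sum_\ell\N{Q_{\eps,\ell}v_h}^2$ remain $\mathcal{O}(1)$ under~\eqref{eq:E20}, while simultaneously the $\kseb^{-\alpha}$ factors accompanying $\N{v_h}^2$ do not blow up faster than $(\eksb)^2$ (so as to be absorbable by Lemma~\ref{lem:lowrestQi}). The value $\alpha=1$ is the unique choice for which the geometric-mean structure of~\eqref{eq:E20} produces these matching powers of $\eksb$ on both sides; this is why the bound in~\eqref{eq:E20} involves $\kseb^2$ on $kH(1+H/\delta)$ rather than some other power.
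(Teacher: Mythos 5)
Your proof is correct and takes essentially the same route as the paper: it starts from the identity in Lemma~\ref{lem:combined}, controls the remainder terms via Lemmas~\ref{lem:coarse} and~\ref{lem:local}, and closes the argument with Lemma~\ref{lem:lowrestQi}. The only stylistic difference is that you commit to $\alpha=\alpha'=1$ at the outset and justify this a posteriori, whereas the paper keeps $\alpha,\alpha'$ free, derives four separate conditions (one pair for absorption into $\sum_\ell\N{Q_{\eps,\ell}v_h}^2_{1,k}$ and one pair for controlling the $\N{v_h}^2_{1,k}$ term), and only then optimizes to $\alpha=\alpha'=1$ to balance the exponents and arrive at~\eqref{eq:E20}.
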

Note that the condition on the coarse mesh diameter $\Hsub$ is more stringent than the condition on the subdomain diameter $H$; one finds similar criteria in domain-decomposition theory for coercive elliptic PDEs; see, e.g., \cite{GrLeSc:07}.

The following corollary 
restricts attention to 
 a commonly encountered situation. 
\begin{corollary} Suppose $\delta \sim \Hsub \sim H$. There exists a  constant $\cC_1>0$ 
such that
\beq\label{eq:E121}
\vert (v_h, Q_\eps v_h )_{1,k} \vert \ \gtrsim \ \left(\frac{\vert \eps\vert }{k^2} \right)^{2}\ \Vert v_h \Vert_{1,k}^2 \ , \quad \text{for all} \quad  v_h \in \cV^h, 
\eeq
when
\beqs
  kH     \ \leq \   \cC_1 \left(\frac{\vert \eps \vert }{k^2}\right)^3.
\eeqs
\end{corollary}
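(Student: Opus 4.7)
The plan is to deduce this as a direct specialization of Theorem \ref{thm:final} to the regime $\delta \sim \Hsub \sim H$. First I would observe that this assumption implies $H/\delta \sim 1$, so the factor $(1 + H/\delta)$ is bounded above and below by positive constants independent of $k$ and $\eps$. Consequently $(1 + H/\delta)^{-1} \sim 1$ can be absorbed into the implicit constant in the lower bound \eqref{eq:E12}, immediately yielding the desired estimate \eqref{eq:E121}.

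Next I would analyze how the smallness condition \eqref{eq:E20} simplifies. Substituting $\Hsub \sim H$ and $1 + H/\delta \sim 1$, the condition becomes
\beqs
\max\bigl\{kH,\ kH (k^2/\veps)^2\bigr\} \ \lesssim \ \veps/k^2.
\eeqs
The standing assumption $\veps \lesssim k^2$ from \eqref{eq:est_epseta} implies $k^2/\veps \gtrsim 1$, so $(k^2/\veps)^2 \geq k^2/\veps \geq 1$ and hence the second term inside the maximum dominates the first. The condition therefore reduces to
\beqs
kH \kseb^2 \ \lesssim \ \eksb,
\eeqs
which, after multiplying both sides by $(\veps/k^2)^2$, is equivalent to $kH \lesssim (\veps/k^2)^3$, exactly as stated in the corollary (possibly with a smaller constant $\cC_1$).

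There is essentially no obstacle here; the result is a routine specialization of Theorem \ref{thm:final}. The only point requiring even a moment of care is verifying that the second term in the maximum dominates the first, which follows at once from the standing hypothesis $\veps \lesssim k^2$.
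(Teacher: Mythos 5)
Your proposal is correct and takes the only sensible route: the paper states the corollary immediately after Theorem \ref{thm:final} without separate proof, evidently expecting exactly the specialization $\delta \sim \Hsub \sim H$ (so $1+H/\delta \sim 1$) followed by the observation that, since $\veps \lesssim k^2$ forces $(k^2/\veps)^2 \gtrsim 1$, the second term inside the maximum in \eqref{eq:E20} dominates, reducing the condition to $kH \lesssim (\veps/k^2)^3$. Your write-up matches this argument precisely.
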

 
\bpf[Proof of Theorem \ref{thm:final}]
By Lemma \ref{lem:combined},
\beqs
|(v_h, Q_\eps v_h)_{1,k}| \ \gtrsim\  \sum_{\ell=0}^N \N{Q_{\eps,\ell}v_h}_{1,k}^2 - \sum_{\ell=0}^N \left( D_{\eps,\ell}(v_h) + B_{\eps,\ell}(v_h)\right) .
\eeqs
Then, using the bounds \eqref{eq:E10} and \eqref{eq:E11} 
we have
\begin{align*}
&|(v_h, Q_\eps v_h)_{1,k}|\  \gtrsim \ \sum_{\ell=0}^N \N{Q_{\eps,\ell}v_h}_{1,k}^2 \\
&\qquad- (kH)^{1/2} \kseb^{1/2} \left( 1 + (kH)^{1/2} \kseb^{1/2}\right) \left[ \kseb^{\alpha} \N{Q_{\eps,0}v_h}_{1,k}^2 + \kseb^{-\alpha} \N{v_h}^2_{1,k}\right] \\
&\qquad-(k\Hsub)  \left[ \kseb^{\alpha'} \sum_{\ell=1}^N\N{Q_{\eps,\ell}v_h}_{1,k}^2 + \kseb^{-\alpha'} \N{v_h}^2_{1,k}\right] 
\end{align*}
for $\alpha,\alpha'\geq 0$.
Therefore, there exist $C_1, C_2>0$ (sufficiently small) such that
\beq\label{eq:N1}
(kH)^{1/2} \kseb^{(1+2\alpha)/2} \left( 1 + (kH)^{1/2}\kseb^{1/2}\right) \ \leq \ C_1,
\eeq
and
\beq\label{eq:N2}
(k\Hsub) \kseb^{\alpha'}\ \leq \ C_2
\eeq
ensure that
\begin{align}
\nonumber
|(v_h, Q_\eps v_h)_{1,k}| \ \gtrsim \ & \sum_{\ell=0}^N \N{Q_{\eps,\ell}v_h}_{1,k}^2
- (k\Hsub) \kseb^{-\alpha'} \N{v_h}^2_{1,k}\\
&-(kH)^{1/2} \kseb^{1/2} \left( 1 + (kH)^{1/2} \kseb^{1/2}\right)\kseb^{-\alpha} \N{v_h}^2_{1,k}.
\label{eq:N3}
\end{align}
Since $\alpha\geq0$, there exists a $\widetilde{C}_1>0$ such that
\beq\label{eq:N4}
(kH)^{1/2} \kseb^{(1+2\alpha)/2}\ \leq \ \widetilde{C}_1
\eeq
ensures that \eqref{eq:N1} holds; i.e.~\eqref{eq:N3} holds under \eqref{eq:N4} and \eqref{eq:N2}.

Using in \eqref{eq:N3} the bound in Lemma \ref{lem:lowrestQi}, we obtain 
\begin{align*}|(v_h, Q_\eps v_h)_{1,k}| \ \gtrsim \ &\left(1+ \frac{H}{\delta}\right)^{-1} \eksb^2 \N{v_h}^2_{1,k} -(k\Hsub) \kseb^{-\alpha'} \N{v_h}^2_{1,k}\\
&-(kH)^{1/2} \kseb^{1/2} \left( 1 + (kH)^{1/2} \kseb^{1/2}\right)\kseb^{-\alpha} \N{v_h}^2_{1,k}.
\end{align*}
Therefore, there exist $C_3, C_4>0$ (sufficiently small) so that the conditions
\beq\label{eq:E13}
(kH)^{1/2} \kseb^{(1-2\alpha)/2} \left( 1 + (kH)^{1/2} \kseb^{1/2}\right) \ \leq \ C_3 \left(1+ \frac{H}{\delta}\right)^{-1} \eksb^2
\eeq
and
\beq\label{eq:E13a}
(k\Hsub) \kseb^{-\alpha'}\ \leq \ C_4  \left(1+ \frac{H}{\delta}\right)^{-1} \eksb^2,
\eeq
together with \eqref{eq:N3} and \eqref{eq:N4}, ensure that the result \eqref{eq:E12} holds.

Now, condition \eqref{eq:E13} can be rewritten as 
\beqs
(kH)^{1/2} \kseb^{1/2} \left( 1 + (kH)^{1/2} \kseb^{1/2}\right) \ \leq \ C_3 \left(1+ \frac{H}{\delta}\right)^{-1} \eksb^{2-\alpha}.
\eeqs
Now, since $\eps\lesssim k^2$, 
\beqs
 \left(1+ \frac{H}{\delta}\right)^{-1} \eksb^\beta\lesssim 1
\eeqs
for any  $\beta\geq 0$. Therefore, if $\alpha\leq 2$, then there exists a $\widetilde{C_3}>0$ such that the condition \eqref{eq:E13} is ensured by the condition 
\beqs
(kH)^{1/2} \kseb^{1/2}\leq \widetilde{C}_3 \left(1+ \frac{H}{\delta}\right)^{-1} \eksb^{2-\alpha}.
\eeqs
i.e.
\beq\label{eq:E15}
(kH)^{1/2} \kseb^{(5-2\alpha)/2} \leq \widetilde{C_3} \left(1+ \frac{H}{\delta}\right)^{-1} \ .
\eeq

In summary (from \eqref{eq:N4}, \eqref{eq:N2}, \eqref{eq:E15}, and \eqref{eq:E13a}) we have shown that there exist $\widetilde{C}_1, C_2, \widetilde{C}_3, C_4>0$ such that the required 
result \eqref{eq:E12}, holds if the following four conditions hold:
\beq\label{eq:E16}
(kH)^{1/2} \kseb^{(1+2\alpha)/2} \ \leq \ \widetilde{C}_1,
\eeq
\beq\label{eq:E17}
(k\Hsub) \kseb^{\alpha'}\ \leq \ C_2,
\eeq
\beq\label{eq:E18}
(kH)^{1/2} \kseb^{(5-2\alpha)/2} \left(1 + \frac{H}{\delta}\right) \leq\widetilde{C}_3,
\eeq
and
\beq\label{eq:E19}
(k\Hsub) \kseb^{2-\alpha'}\left(1 + \frac{H}{\delta}\right)\leq C_4,
\eeq
where $0\leq \alpha\leq 2$ and $\alpha'\geq 0$. 

The optimal choice of $\alpha$ to balance the exponents in \eqref{eq:E16} and \eqref{eq:E18} (ignoring the factor $(1+ H/\delta)$) is $\alpha=1$, and the optimal choice of $\alpha'$ to balance the exponents in \eqref{eq:E17} and \eqref{eq:E19} (again ignoring $(1+ H/\delta)$) is $\alpha'=1$. With these values of $\alpha$ and $\alpha'$, the four conditions above are ensured by the  condition \eqref{eq:E20}. 
\epf  
 
\bre[One-level methods]
Inspecting the proof of Theorem \ref{thm:final}, we see that the bound from below on the field of values relies on the bound in Lemma \ref{lem:lowrestQi}, which in turn relies on the second bound in Lemma \ref{lem:stable_splitting}. In the case of the one-level method (i.e.~$A_\eps$ is preconditioned with \eqref{eq:minor}), the constant on the right-hand side of the analogue of the second bound in \eqref{eq:stable_splitting} does not $\sim 1$ when $\delta \sim H$; instead it blows up as $H\tendo$. This is why we do not currently have a result analogous to Theorem \ref{thm:final} for the one-level method.
\ere

\section{Matrices and convergence of GMRES}
\label{sec:Matrices}

In this section we interpret the results of Theorems \ref{thm:boundQ} and  \ref{thm:final} in 
terms of  matrices and explain their   implications  for the convergence of GMRES for the Helmholtz equation. 
Let us begin by recalling the  convergence theory for GMRES due originally to Elman \cite{El:82}  
and Eisenstadt, Elman and Schultz \cite{EiElSc:83}, and used in the context of domain decomposition methods in   \cite{CaWi:92}.  
The most convenient statement for our purposes is   
\cite{BeGoTy:06}. 
We consider any abstract  linear system 
 \begin{equation} \label{eq:abstract}
\matrixC \bfx = \bfd \end{equation}
in $\mathbb{C}^n$, where $\matrixC$ is an  $n\times n$ nonsingular complex matrix.   
Choose an initial guess $\bfx^0$ , introduce the residual $\bfr^0 = \bfd- C \bfx^0$ and 
the usual Krylov spaces:  
$$  \mathcal{K}^m(C, \bfr^0) := \mathrm{span}\{\matrixC^j \bfr^0 : j = 0, \ldots, m-1\} \ .$$

Let $\langle \cdot , \cdot \rangle_D$ denote the inner product on $\C^n$ 
induced by some Hermitian positive definite  matrix $D$, i.e.  
\begin{equation}\label{eq:wip}
\langle \bV, \bW\rangle_D := \bW^*D\bV
\end{equation} 
with induced norm $\Vert \cdot \Vert_D$, where $^*$ denotes Hermitian transpose. For $m \geq 1$, define   $\bfx^m$  to be  the unique element of $\cK^m$ satisfying  the   
 minimal residual  property: 
$$ \ \Vert \bfr^m \Vert_D := \Vert \bfd - \matrixC \bfx^m \Vert_D \ = \ \min_{\bfx \in \mathcal{K}^m(C, \br^0)} \Vert {\bfd} - {\matrixC} {\bfx} \Vert_D  , $$
When $D = I$ this is just the usual GMRES algorithm, and we write $\Vert \cdot \Vert = \Vert \cdot \Vert_I$, but for  more general  $D$ it 
is the weighted GMRES method \cite{Es:98} in which case  
its implementation requires the application of the weighted Arnoldi process \cite{GuPe:14}. In  \S \ref{sec:Numerical} we give results for standard  GMRES which corresponds to the case $D= I$ and also for a weighted variant with respect to a certain matrix $D$ defined by \eqref{eq:weight} below.  
The following theorem  is a simple generalisation of the classical convergence result 
stated in  \cite{BeGoTy:06}.

\begin{theorem} \label{Thm:A1}  Suppose    $0 \not \in W_D(\matrixC)$.  \  Then
\begin{equation}\label{eq:GMRESest}
\frac{\Vert \bfr^m \Vert_D} { \Vert \bfr^0 \Vert_D} \ \leq  \ \sin^m (\beta)   \ , 
\quad \text{where} \quad  \cos(\beta) : =
\frac{\mathrm{dist}(0,W_D(\matrixC))}{\Vert \matrixC\Vert_D }\ ,  \end{equation}
where $W_D(C)$ denotes the  {\em field of values} (also called the {\em numerical range} of $C$) with respect to the inner product induced by $D$, i.e.  
$$W_D(C) \ = \ \{\langle \bfx,\matrixC \bfx\rangle_D: \bfx \in \mathbb{C}^n, 
\Vert \bfx \Vert_D = 1\}. $$ 
\end{theorem}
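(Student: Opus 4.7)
My plan is to adapt the classical Elman--Eisenstadt--Schultz argument to the $D$-weighted inner product. The whole theorem will reduce to the one-step contraction
\[\|\bfr^{m+1}\|_D \ \leq\ \sin(\beta)\,\|\bfr^m\|_D \qquad (m \geq 0),\]
from which the stated estimate follows immediately by induction on $m$.

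To establish this, I would first use the minimal-residual property of weighted GMRES. Since $\bfr^m \in \bfr^0 + \matrixC\cK^m(\matrixC,\bfr^0)$, the vector $\matrixC\bfr^m$ lies in $\matrixC\cK^{m+1}(\matrixC,\bfr^0)$, and hence $\bfr^m - \alpha\matrixC\bfr^m$ is an admissible residual at step $m+1$ for every $\alpha \in \mathbb{C}$. Thus
\[\|\bfr^{m+1}\|_D^2 \ \leq\ \min_{\alpha \in \mathbb{C}} \|\bfr^m - \alpha\matrixC\bfr^m\|_D^2.\]
Expanding the norm in the $D$-inner product and optimising over complex $\alpha$ gives the closed form
\[\min_{\alpha \in \mathbb{C}}\|\bfr^m - \alpha\matrixC\bfr^m\|_D^2 \ =\ \|\bfr^m\|_D^2 \ -\ \frac{|\langle\matrixC\bfr^m,\bfr^m\rangle_D|^2}{\|\matrixC\bfr^m\|_D^2}.\]

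Next I would control the subtracted quotient using the hypothesis $0 \notin W_D(\matrixC)$. The scalar $\langle\bfr^m,\matrixC\bfr^m\rangle_D/\|\bfr^m\|_D^2$ lies in $W_D(\matrixC)$ and is the complex conjugate of $\langle\matrixC\bfr^m,\bfr^m\rangle_D/\|\bfr^m\|_D^2$, so
\[|\langle\matrixC\bfr^m,\bfr^m\rangle_D| \ \geq\ \mathrm{dist}\bigl(0,W_D(\matrixC)\bigr)\,\|\bfr^m\|_D^2.\]
Combining this with $\|\matrixC\bfr^m\|_D \leq \|\matrixC\|_D\,\|\bfr^m\|_D$ and the definition $\cos\beta = \mathrm{dist}(0,W_D(\matrixC))/\|\matrixC\|_D$ delivers
\[\|\bfr^{m+1}\|_D^2 \ \leq\ \bigl(1 - \cos^2\beta\bigr)\|\bfr^m\|_D^2 \ =\ \sin^2(\beta)\,\|\bfr^m\|_D^2,\]
which is the desired one-step contraction.

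I do not anticipate any real obstacle: this is essentially bookkeeping once the convention in \eqref{eq:wip}, that $\langle\cdot,\cdot\rangle_D$ is conjugate-linear in its \emph{second} argument, is tracked carefully. The only mild subtlety is noting that $W_D(\matrixC)$ and the set $\{\langle\matrixC\bfx,\bfx\rangle_D:\|\bfx\|_D=1\}$ are complex conjugates of one another and therefore share the same distance to the origin, which is what legitimises the lower bound used above.
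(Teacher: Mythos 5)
Your proof is correct, but it takes a genuinely different route from the paper's. The paper proves the general-$D$ case by conjugation: it sets $\widetilde{\matrixC}=D^{1/2}\matrixC D^{-1/2}$, $\widetilde{\bfr}^m=D^{1/2}\bfr^m$, etc., observes that the $D$-weighted minimal-residual iterate for the original system becomes the ordinary (Euclidean) GMRES iterate for the transformed system, then simply cites the Euclidean Elman bound from \cite{BeGoTy:06} and maps it back. Your argument instead reproves the Elman bound from scratch directly in the $D$-inner product: you observe that $\bfr^m-\alpha\matrixC\bfr^m$ is an admissible residual at step $m+1$, compute the optimal $\alpha\in\Com$, and bound the resulting quotient using $\mathrm{dist}(0,W_D(\matrixC))$ and $\N{\matrixC}_D$, giving the one-step contraction $\N{\bfr^{m+1}}_D\le\sin(\beta)\,\N{\bfr^m}_D$ and the theorem by induction. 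Each step checks out: the admissibility claim holds because $\bfr^m\in\cK^{m+1}(\matrixC,\bfr^0)$; the formula for the optimal complex least-squares parameter is correct with the convention \eqref{eq:wip}; and you are right that $W_D(\matrixC)$ and $\{\langle\matrixC\bfx,\bfx\rangle_D:\N{\bfx}_D=1\}$ are complex conjugates and hence equidistant from the origin. The paper's reduction is shorter and offloads the core estimate to the literature, whereas yours is self-contained, makes the mechanism visible, and does not require the reader to verify that $\widetilde{\matrixC}$, $\widetilde{\bfr}^m$, and the weighted Krylov space all transform compatibly; it is also the more instructive version if one wants to see why $0\notin W_D(\matrixC)$ is exactly the hypothesis needed.
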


\begin{proof}
For the ``standard'' case $D=I$ the result is stated in \cite{BeGoTy:06}. 
For general  $D$,  
write $\widetilde{C} = D^{1/2}CD^{-1/2}$, 
$\widetilde{\bfd} = D^{1/2}\bfd$, $\widetilde{\bfx} = D^{1/2}\bfx$, $\widetilde{\bfx}^m = D^{1/2}\bfx^m$, and $\widetilde{\bfr^0}= D^{1/2}\bfr_0$. Then it is 
easy to see that $\widetilde{x}^m \in \cK(\widetilde{C}, \widetilde{r}^0)$ and it  satisfies the  ``standard'' 
GMRES criterion for the transformed system but in the Euclidean norm: 
$$ \Vert \widetilde{\bfr}^m \Vert : =  \Vert \widetilde{\bfd} - \widetilde{\matrixC} \widetilde{\bfx}^m \Vert \ = \ \min_{\widetilde{\bfx} \in \mathcal{K}^m(\widetilde{C}, \widetilde{\bfr^0})} \Vert \widetilde{\bfd} - \widetilde{\matrixC} \widetilde{\bfx}\Vert  . $$ 
Then we know that the result 
\eqref{eq:GMRESest}  holds with $D = I$, $\matrixC= \widetilde{\matrixC}$ and  $\bfr^m = \widetilde{\bfr}^m$. 
It is then simple to transform this back to obtain  \eqref{eq:GMRESest} in the case of general $D$. 
\end{proof} 

\begin{remark}
Note that for all $\bfx \in \C^n$ with $\Vert \bfx \Vert_D = 1$, 
we have 
$$ 0 \ \leq \ \mathrm{dist}(0,W_D(C))\  \leq \ \vert \langle   \bfx, \matrixC \bfx\rangle_D\vert \leq \Vert C \Vert_D$$
and so the second formula in \eqref{eq:GMRESest}  necessarily defines an angle $\beta$ 
in the range $[0, \pi/2]$. 
Thus, for good GMRES convergence we aim to ensure that   
$\mathrm{dist}(0,W_D(\matrixC))$ is bounded well away from zero and that  
$\Vert \matrixC\Vert_D $ is as small as possible. 
Theorem \ref{Thm:A1} could therefore be viewed as a generalisation  to the  case of GMRES of the familiar 
condition number criterion for the convergence of the conjugate gradient  
method for positive definite  systems.
 The result of Theorem \ref{Thm:A1} is stated 
without proof in \cite{CaWi:92},  with a reference to \cite{El:82}; however \cite{El:82} is concerned only with standard GMRES in the Euclidean inner product.
\end{remark}

\begin{remark}\label{rem:energy}
As we see in Theorems  \ref{thm:final1} and \ref{thm:final3} below,   
the analysis of \S \ref{sec:Convergence}  provides us with estimates for the norm and field of values of the preconditioned matrix in the { weighted norm} induced by the real symmetric positive matrix $D_k$ defined in \eqref{eq:weight} below. 
Other analyses of domain decomposition methods for 
non self-adjoint or  non-positive definite PDEs (e.g. \cite{CaZo:02}, \cite{SaSz:07}) have 
arrived at analogous estimates in weighted norms, 
although the weights appearing  in these  
previous analyses are different, being  
associated with either the standard $H^1$ norm or semi-norm,  
and not the $k$- weighted energy norm, appropriate for Helmholtz problems, used here.     
\end{remark}

We now use the theory in \S \ref{sec:Convergence}  to obtain results about  
the iterative solution of the linear systems arising from the  
Helmholtz equation. 
We start by  interpreting   the operators $Q_{\eps,\ell}$ defined in \eqref{eq:defQi} 
in terms of matrices. 

\begin{theorem} \label{thm:repQ}
Let $v_h = \sum_{j\in \cI^h} V_j \phi_j \ \in \ \cV^h$. Then 
\begin{align*}
(i)  \quad Q_{\eps,\ell} v_h \ & = \sum_{j \in \cI^h({\Omega_\ell})} \left(R_\ell^TA_{\eps,\ell}^{-1} R_\ell A_\eps \mathbf{V}\right)_j \phi_j \ ,   \quad \ell = 1, \ldots, N\ , \\
(ii)  \quad Q_{\eps,0} v_h \ & = \sum_{p \in \cI^H} \left(R_0^TA_{\eps,0}^{-1} R_0 A_\eps 
\mathbf{V}\right)_p \Phi_p   \ ,  
\end{align*} 
with $A_{\eps,\ell}\ , \ell = 0, \ldots, N$ defined in \eqref{eq:minor} and \eqref{eq:coarsegrid}. 
\end{theorem}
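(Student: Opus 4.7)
The plan is to translate the variational definition \eqref{eq:defQi} directly into a linear system for the coefficient vector of $Q_{\eps,\ell}v_h$ expanded in the natural basis of its target subspace, and then to identify the resulting system matrix as a principal submatrix of $A_\eps$ in the local case, or as the coarse-grid matrix $A_{\eps,0}$ in the coarse case. Throughout, I would exploit the identity $(A_\eps)_{m,j} = a_\eps(\phi_j,\phi_m)$, which is immediate from \eqref{eq:matrices}.

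For the local case $\ell = 1,\ldots,N$ (part (i)), I would write $Q_{\eps,\ell}v_h = \sum_{j \in \cI^h(\Omega_\ell)} U_j \phi_j$ and test \eqref{eq:defQi} against $w_{h,\ell} = \phi_m$ for each $m \in \cI^h(\Omega_\ell)$. The left-hand side becomes $\sum_{j \in \cI^h(\Omega_\ell)} U_j\, a_\eps(\phi_j,\phi_m)$, and since the principal submatrix of $A_\eps$ indexed by $\cI^h(\Omega_\ell)$ is precisely $R_\ell A_\eps R_\ell^T = A_{\eps,\ell}$, this equals $(A_{\eps,\ell}\mathbf{U})_m$. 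The right-hand side evaluates to $\sum_{k \in \cI^h} V_k\, a_\eps(\phi_k,\phi_m) = (R_\ell A_\eps \mathbf{V})_m$. Solving gives $\mathbf{U} = A_{\eps,\ell}^{-1} R_\ell A_\eps \mathbf{V}$; prolonging to the global index set via $R_\ell^T$ yields the displayed formula.

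For the coarse case (part (ii)), the preliminary ingredient is the identity
\begin{equation*}
\Phi_p \;=\; \sum_{j \in \cI^h} (R_0)_{p,j}\,\phi_j,
\end{equation*}
which follows from the Lagrangian property of $\{\phi_j\}$ combined with the definition $(R_0)_{p,j} = \Phi_p(x_j^h)$ in \eqref{eq:restriction}, and relies on the nested-mesh assumption that guarantees $\cV_0 \subset \cV^h$. Setting $Q_{\eps,0}v_h = \sum_{p \in \cI^H} W_p \Phi_p$ and testing \eqref{eq:defQi} against $\Phi_q$ for each $q \in \cI^H$, the preceding identity and bilinearity of $a_\eps$ collapse the left-hand side to $\sum_{p,j,m} W_p (R_0)_{q,m} (A_\eps)_{m,j} (R_0)_{p,j} = (R_0 A_\eps R_0^T \mathbf{W})_q = (A_{\eps,0}\mathbf{W})_q$ by \eqref{eq:coarsegrid}, while the right-hand side becomes $(R_0 A_\eps \mathbf{V})_q$. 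Solving produces $\mathbf{W} = A_{\eps,0}^{-1} R_0 A_\eps \mathbf{V}$, from which the stated expansion is read off.

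The only point that requires any care is the fine-basis representation of $\Phi_p$, which is what permits the evaluation-based matrix $R_0$ defined in \eqref{eq:restriction} to play the algebraic role of coarse-to-fine prolongation when sandwiched around $A_\eps$. Once that identity is available, everything reduces to routine bookkeeping with principal submatrices, and the invertibility of both $A_{\eps,\ell}$ and $A_{\eps,0}$ established immediately after \eqref{eq:coarsegrid} ensures the matrix expressions are well-defined.
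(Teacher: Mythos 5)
Your proof is correct and follows essentially the same route as the paper's: translate the Galerkin definition \eqref{eq:defQi} into a linear system by testing against a basis of the target subspace, identify the system matrix as the appropriate submatrix of $A_\eps$ (using $R_\ell A_\eps R_\ell^T = A_{\eps,\ell}$ or $R_0 A_\eps R_0^T = A_{\eps,0}$), and solve. The paper phrases this via the identity $a_\eps(y_{h,\ell},w_{h,\ell}) = \mathbf{w}^* A_{\eps,\ell}\mathbf{y}$ for arbitrary $w_{h,\ell}\in\cV_\ell$ rather than testing entrywise against $\phi_m$, and leaves the coarse case as "similar," but the content is the same; your explicit derivation of $\Phi_p = \sum_j (R_0)_{p,j}\phi_j$ and the resulting reduction of $a_\eps(\Phi_p,\Phi_q)$ to $(A_{\eps,0})_{q,p}$ is exactly the omitted bookkeeping.
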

\begin{proof}
These results are similar to those for symmetric elliptic problems found for example in \cite{ToWi:05}, so we will be brief. 
For (i), let $\ell \in \{1, \ldots, N\}$, let  $w_{h,\ell}$ and $y_{h,\ell}$ be  arbitrary elements of $\cV_\ell$, and denote  their  coefficient vectors  
$\mathbf{W}$  and $\mathbf{Y}$ (with nodal values on all of $\cI^h$).  
Then   $\mathbf{W} = R_\ell^T\mathbf{w}$  and $\mathbf{Y}= R_\ell^T\mathbf{y}$, where $\mathbf{w}, \mathbf{y}$ have nodal values on $\cI^h(\Omega_\ell)$. 
The definitions of $A_\eps$ and $A_{\eps,\ell}$, \eqref{eq:discrete} and \eqref{eq:minor}, then imply that $a_\eps(y_{h,\ell}, w_{h,\ell}) = \mathbf{W}^* A_{\eps} \mathbf{Y} =  \mathbf{w}^* A_{\eps, \ell} \mathbf{y}$. So if $\by:= A_{\eps,\ell}^{-1}R_\ell A_\eps \bV $ for some $\bV \in \mathbb{C}^n$, we have 
$$
a_\eps(y_{h,\ell}, w_{h,\ell}) \ = \ \mathbf{w}^* R_\ell A_\eps \bV \ = \ 
(R_\ell^T \mathbf{w})^*  A_\eps \bV =\ \bW^* A_\eps \bV
\ =\ a_\eps(v_h, w_{h,\ell}) , 
$$ 
where $y_{h,\ell}$ is the finite element function with nodal values $\mathbf{y}$. 
Thus, by definition of $Q_{\eps,\ell}$,  we have   $ y_{h,\ell} = Q_{\eps,\ell} v_h $,  which implies the result (i).  
 The proof of (ii) is similar. 
\end{proof}

\vspace{0.5cm}

The main results of the previous section - Theorems  \ref{thm:boundQ} 
and \ref{thm:final} -  give estimates for the norm and the field of values of the operator $Q_\eps$ on the space $\cV^h$, with respect to the inner product $(\cdot, \cdot)_{1,k}$ and its associated  norm. In the following we translate these results into norm and 
field of values estimates for the preconditioned matrix $B_{\eps, AS}^{-1}A_\eps$  in the weighted 
inner product $ \langle \cdot, \cdot\rangle_{D_k}$, where the weight matrix is   :   
\begin{equation}
 D_k := S + k^2 M\ , 
\label{eq:weight}
 \end{equation}
and  $S$ and $M$ are  defined in \eqref{eq:matrices}.  
In fact $D_k$ is the matrix representing  the $(\cdot, \cdot)_{1,k}$ 
inner product on the finite element space $\cV^h$ in the sense that if $v_h, w_h \in \cV^h$ with coefficient vectors $\bV, \bW$ then 
\begin{equation}\label{eq:ip}
(v_h, w_h )_{1,k}\  =\ \langle \bV, \bW\rangle_{D_k} . \end{equation}    
 
\begin{theorem}\label{ipnormQ}
Let $v_h = \sum_{j\in \cI^h} V_j \phi_h \ \in \ \cV^h$. Then
\begin{eqnarray*}
(i) &  (v_h, Q_\eps v_h)_{1,k} \ & =\ \langle \bV, B_{\eps, AS} ^{-1} A_\eps \bV\rangle_{D_k}\\
(ii) &  \Vert Q_\eps v_h\Vert _{1,k} \ & =\ \Vert B_{\eps, AS}^{-1} A_\eps \bV\Vert_{D_k}\\
\end{eqnarray*}
\end{theorem}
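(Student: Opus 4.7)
The plan is to combine Theorem \ref{thm:repQ} with the identity \eqref{eq:ip} that relates the $(\cdot,\cdot)_{1,k}$ inner product on $\cV^h$ to the weighted matrix inner product $\langle \cdot,\cdot\rangle_{D_k}$. The key intermediate step is to show that, under the identification of a finite-element function with its coefficient vector in the fine basis $\{\phi_j : j\in \cI^h\}$, the operator $Q_\eps$ is represented by the matrix $B_{\eps, AS}^{-1}A_\eps$. Once this is in hand, both (i) and (ii) drop out immediately from \eqref{eq:ip}.

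First I would record the local contributions. Theorem \ref{thm:repQ}(i) says directly that for $\ell = 1,\ldots,N$ the function $Q_{\eps,\ell} v_h$, expanded in the fine basis, has coefficient vector $R_\ell^T A_{\eps,\ell}^{-1} R_\ell A_\eps \mathbf{V}$. For the coarse part there is a small translation to perform, since Theorem \ref{thm:repQ}(ii) expresses $Q_{\eps,0} v_h$ in the coarse basis $\{\Phi_p\}$ rather than in $\{\phi_j\}$. Here I would use the definition \eqref{eq:restriction} of $R_0$ together with the nodal-interpolant identity
\beq\label{eq:coarsefine}
\Phi_p \ =\ \sum_{j\in\cI^h} \Phi_p(x_j^h)\,\phi_j \ =\ \sum_{j\in\cI^h} (R_0)_{pj}\,\phi_j,
\eeq
which holds because $\Phi_p \in \cV_0 \subset \cV^h$ and the $\phi_j$ are nodal hat functions. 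Thus if $Q_{\eps,0} v_h = \sum_p c_p \Phi_p$ with $\mathbf{c} = A_{\eps,0}^{-1} R_0 A_\eps \mathbf{V}$ (from Theorem \ref{thm:repQ}(ii)), then its fine-basis coefficient vector is $R_0^T \mathbf{c} = R_0^T A_{\eps,0}^{-1} R_0 A_\eps \mathbf{V}$.

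Summing the coarse and local contributions and comparing with the definitions \eqref{eq:minor} and \eqref{eq:defAS}, the fine-basis coefficient vector of $Q_\eps v_h = \sum_{\ell=0}^N Q_{\eps,\ell}v_h$ is exactly $\mathbf{W} := B_{\eps,AS}^{-1} A_\eps \mathbf{V}$. Now (i) follows by applying \eqref{eq:ip} to $v_h$ and $w_h := Q_\eps v_h$, giving
\beqs
(v_h, Q_\eps v_h)_{1,k} \ =\ \langle \mathbf{V}, \mathbf{W}\rangle_{D_k} \ =\ \langle \mathbf{V}, B_{\eps,AS}^{-1} A_\eps \mathbf{V}\rangle_{D_k}.
\eeqs
For (ii), applying \eqref{eq:ip} with $v_h = w_h = Q_\eps v_h$ yields $\|Q_\eps v_h\|_{1,k}^2 = \langle \mathbf{W},\mathbf{W}\rangle_{D_k} = \|B_{\eps,AS}^{-1} A_\eps \mathbf{V}\|_{D_k}^2$, and taking square roots finishes the proof.

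The only non-bookkeeping step is the identification \eqref{eq:coarsefine} used to re-express the coarse part in the fine basis; everything else is routine linear algebra once the matrix representation $Q_\eps \leftrightarrow B_{\eps,AS}^{-1}A_\eps$ is established.
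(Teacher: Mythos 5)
Your proposal is correct and takes essentially the same route as the paper: both reduce the claim to the matrix representation $Q_\eps \leftrightarrow B_{\eps,AS}^{-1}A_\eps$ obtained from Theorem~\ref{thm:repQ}, and then invoke the identity~\eqref{eq:ip} relating $(\cdot,\cdot)_{1,k}$ to $\langle\cdot,\cdot\rangle_{D_k}$. Your explicit coarse-to-fine translation $\Phi_p = \sum_j (R_0)_{pj}\phi_j$ spells out a step the paper leaves implicit (and tacitly fixes a small indexing slip in the statement of Theorem~\ref{thm:repQ}(ii), where the displayed vector is indexed by $\cI^h$ while the sum runs over $p\in\cI^H$).
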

\begin{proof}
For arbitrary $w_h, v_h \in \cV^h$, with coefficient vectors $\bW$ and $\bV$, using Theorem \ref{thm:repQ}, 
we have
\beqs
(w_h, Q_{\eps,\ell}v_h)_{1,k} \ =\ \langle \bW,R_\ell^TA_{\eps,\ell}^{-1}R_\ell A_\eps \bV  \rangle_{D_k},  \ \quad \ell = 0, \ldots, N . 
\eeqs
Summing these over $\ell=0,\ldots,N$ and using \eqref{eq:minor}, \eqref{eq:coarsegrid}, and \eqref{eq:defAS}, we obtain
\beqs
(w_h, Q_\eps v_h)_{1,k} \ =\ \langle \bW,B_{\eps, AS}^{-1} A_\eps \bV  \rangle_{D_k},   
\eeqs
from which (i) and (ii) follow immediately.   
\end{proof} 

\vspace{0.5cm}

The following main result now  follows from  Theorems \ref{thm:boundQ}, \ref{thm:final}, and \ref{ipnormQ}.
\begin{theorem}[Main result for left preconditioning]\label{thm:final1}
\begin{align*}
(i)\quad \quad \Vert B_{\eps,AS}^{-1} A_\eps \Vert_{D_k}  & \ \lesssim  \ \left(\ksqeps\right) \quad \text{for all} \quad H, \Hsub.
\end{align*}
Furthermore, there exists a constant $\cC_1$ such that 
 \begin{align*}
 (ii)   \quad \quad 
 \vert \langle \bV, 
B_{\eps, AS}^{-1}A_\eps \bV \rangle_{D_k} \vert  & \ \gtrsim   \ 
\left(1 + \frac{H}{\delta}\right)^{-1} 
\left(\frac{\vert \eps\vert }{k^2} \right)^{2}\ 
\Vert \bV \Vert_{D_k}^2, \quad\tfa \bV \in \Com^n,
\end{align*}
when
\beq\label{eq:E20rpt}
\max\left\{ k\Hsub,\  kH \left(1+ \frac{H}{\delta}\right)  \kseb^2 \right\}  \ \leq \  
\cC_1 \left(1 + \frac{H}{\delta}\right)^{-1}  \left(\frac{\vert \eps \vert }{k^2}\right).
\eeq
\end{theorem}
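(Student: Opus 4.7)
The proof should be essentially a direct translation of the operator-level estimates from Section \ref{sec:Convergence} into matrix statements, using the isometric correspondence between $\cV^h$ equipped with $(\cdot,\cdot)_{1,k}$ and $\Com^n$ equipped with $\langle \cdot,\cdot\rangle_{D_k}$ recorded in \eqref{eq:ip} and Theorem \ref{ipnormQ}. In other words, the substantive analytical content — the continuity and coercivity of $a_\eps$, the stable splitting, the $L^2$-estimate for $Q_{\eps,0}$, and the bookkeeping of the domain/boundary remainders — has already been absorbed into Theorems \ref{thm:boundQ} and \ref{thm:final}; what remains is to package those bounds on the operator $Q_\eps$ as bounds on the preconditioned matrix.

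\textbf{Part (i).} Given $\bV \in \Com^n$, let $v_h = \sum_{j \in \cI^h} V_j \phi_j \in \cV^h$, so that $\Vert \bV \Vert_{D_k} = \Vert v_h \Vert_{1,k}$ by \eqref{eq:ip}. The plan is to apply Theorem \ref{ipnormQ}(ii) to rewrite $\Vert B_{\eps,AS}^{-1} A_\eps \bV \Vert_{D_k} = \Vert Q_\eps v_h \Vert_{1,k}$ and then invoke Theorem \ref{thm:boundQ} to obtain $\Vert Q_\eps v_h \Vert_{1,k} \lesssim (k^2/|\eps|) \Vert v_h \Vert_{1,k}$. Taking the supremum over $\bV$ with $\Vert \bV \Vert_{D_k} = 1$ (equivalently $v_h \in \cV^h$ with $\Vert v_h \Vert_{1,k} = 1$) yields the claimed bound on $\Vert B_{\eps,AS}^{-1} A_\eps \Vert_{D_k}$, with no restriction on $H$ or $\Hsub$.

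\textbf{Part (ii).} The strategy here is identical in spirit. Using Theorem \ref{ipnormQ}(i) I would write
\begin{equation*}
\langle \bV, B_{\eps,AS}^{-1} A_\eps \bV \rangle_{D_k} \ = \ (v_h, Q_\eps v_h)_{1,k},
\end{equation*}
and then apply Theorem \ref{thm:final}, noting that the hypothesis \eqref{eq:E20rpt} is literally the condition \eqref{eq:E20}. The conclusion
\begin{equation*}
\vert (v_h, Q_\eps v_h)_{1,k} \vert \ \gtrsim \ \left(1+\frac{H}{\delta}\right)^{-1}\!\left(\frac{|\eps|}{k^2}\right)^{\!2}\! \Vert v_h \Vert_{1,k}^2
\end{equation*}
translates, via $\Vert v_h \Vert_{1,k}^2 = \Vert \bV \Vert_{D_k}^2$, directly to the required lower bound on $|\langle \bV, B_{\eps,AS}^{-1} A_\eps \bV \rangle_{D_k}|$.

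Since the two results are immediate matrix-level restatements, there is no real obstacle at this stage; the technical difficulties — choosing $\Theta$ so that Im$[\Theta a_\eps(\cdot,\cdot)]$ is coercive, splitting the difference $I - Q_{\eps,\ell}$ into the ``domain'' and ``boundary'' pieces $D_{\eps,\ell}$, $B_{\eps,\ell}$, and balancing the exponents $\alpha,\alpha'$ in Lemmas \ref{lem:coarse}--\ref{lem:local} to produce the sharp condition \eqref{eq:E20} — have all already been confronted in Section \ref{sec:Convergence}. The only small care needed is to verify that the identification $\bV \leftrightarrow v_h$ is a genuine isometry between $(\Com^n, \langle\cdot,\cdot\rangle_{D_k})$ and $(\cV^h,(\cdot,\cdot)_{1,k})$, which is precisely the content of \eqref{eq:ip}, and that the norm and sesquilinear form bounds of Section \ref{sec:Convergence} are stated uniformly over all of $\cV^h$ (not just over a smaller subspace), so that the suprema over unit vectors can be transferred without loss.
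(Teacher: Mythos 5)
Your proposal matches the paper's own (implicit) proof exactly: the paper simply states that Theorem~\ref{thm:final1} ``follows from Theorems~\ref{thm:boundQ}, \ref{thm:final}, and \ref{ipnormQ},'' which is precisely the translation you describe, using the isometry \eqref{eq:ip} and the identities of Theorem~\ref{ipnormQ} to convert the operator-level bounds of Theorems~\ref{thm:boundQ} and~\ref{thm:final} into the matrix statements. Nothing is missing; your write-up is a fair elaboration of the step the paper leaves to the reader.
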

Combining Theorem \ref{Thm:A1} and Theorem \ref{thm:final1} we obtain:
\begin{corollary}[GMRES convergence for left preconditioning]\label{cor:final2}
Consider the weighted GMRES method where the residual is minimised in
the norm induced by $D_k$ (see, e.g., \cite{GuPe:14}). 
Let $\br^m$ denote the $m$th iterate of GMRES applied to the system $A_\eps$, left  preconditioned with 
$B_{\eps,AS}^{-1}$. Then 
\begin{equation}\label{eq:conv_est}
\frac{\Vert \bfr^m \Vert_{D_k}} { \Vert \bfr^0 \Vert_{D_k} } \
\lesssim   \ \left(1- \left(1 + \frac{H}{\delta}\right)^{-2}\, \left(\frac{\vert \eps\vert }{k^2}\right)^6\right)^{m/2} \ ,  
\end{equation}
provided condition \eqref{eq:E20rpt} holds.
\end{corollary}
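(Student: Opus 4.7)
The plan is to simply combine the abstract GMRES convergence estimate in Theorem \ref{Thm:A1} with the norm and field-of-values bounds in Theorem \ref{thm:final1}, applied to the matrix $C := B_{\eps,AS}^{-1} A_\eps$ in the weighted inner product $\langle \cdot,\cdot\rangle_{D_k}$. Since condition \eqref{eq:E20rpt} is assumed to hold, Theorem \ref{thm:final1}(ii) tells us that $0 \notin W_{D_k}(C)$, so Theorem \ref{Thm:A1} applies and yields
\beqs
\frac{\Vert \bfr^m \Vert_{D_k}}{\Vert \bfr^0 \Vert_{D_k}} \ \leq\ \sin^m(\beta), \qquad \cos(\beta) = \frac{\mathrm{dist}(0, W_{D_k}(C))}{\Vert C\Vert_{D_k}}.
\eeqs

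Next, I would insert the two estimates from Theorem \ref{thm:final1}. The upper bound (i) gives $\Vert C\Vert_{D_k} \lesssim k^2/|\eps|$, and the lower bound (ii) gives $\mathrm{dist}(0, W_{D_k}(C)) \gtrsim (1+ H/\delta)^{-1}(|\eps|/k^2)^2$. Dividing yields
\beqs
\cos(\beta) \ \gtrsim\ \left(1+\frac{H}{\delta}\right)^{-1}\left(\frac{|\eps|}{k^2}\right)^{2} \cdot \frac{|\eps|}{k^2} \ =\ \left(1+\frac{H}{\delta}\right)^{-1}\left(\frac{|\eps|}{k^2}\right)^{3},
\eeqs
and therefore
\beqs
\cos^2(\beta) \ \gtrsim\ \left(1+\frac{H}{\delta}\right)^{-2}\left(\frac{|\eps|}{k^2}\right)^{6}.
\eeqs

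Finally, using $\sin^2(\beta) = 1 - \cos^2(\beta)$ and the elementary fact that $\sin^m(\beta) = (\sin^2\beta)^{m/2}$, I would obtain
\beqs
\sin^m(\beta) \ \leq\ \left(1 - c\left(1+\frac{H}{\delta}\right)^{-2}\left(\frac{|\eps|}{k^2}\right)^{6}\right)^{m/2}
\eeqs
for some constant $c>0$ absorbed into the $\lesssim$ notation, which is precisely the claimed bound \eqref{eq:conv_est}. There is no real obstacle here: the only minor care needed is in tracking that the hidden constants from the two applications of $\lesssim$ in Theorem \ref{thm:final1} combine into a single constant, and in verifying that the resulting $\cos(\beta)$ lies in $[0,1]$ (which follows because $|\eps|\lesssim k^2$ and the factor $(1+H/\delta)^{-1} \leq 1$, so the right-hand side above is bounded by $1$), making the application of $\sin^2 = 1 - \cos^2$ legitimate.
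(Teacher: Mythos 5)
Your proposal is correct and is exactly the route the paper takes: the corollary is stated immediately after the sentence ``Combining Theorem \ref{Thm:A1} and Theorem \ref{thm:final1} we obtain,'' with no further proof given, so the intended argument is precisely the substitution of the norm bound (i) and field-of-values bound (ii) into the Elman-type estimate from Theorem \ref{Thm:A1}, followed by $\sin^2\beta = 1-\cos^2\beta$. Your bookkeeping of the exponents (obtaining $\cos\beta \gtrsim (1+H/\delta)^{-1}(|\eps|/k^2)^3$ and then squaring) and your remark that the resulting bound on $\cos\beta$ is automatically at most $1$ because $|\eps|\lesssim k^2$ are both accurate.
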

As a particular example of Corollary \ref{cor:final2} we see that,
provided $\vert \eps\vert  \sim k^2$, $H, \Hsub \sim k^{-1}$ and $\delta \sim H$,  then GMRES will
converge with the number of iterations independent of all parameters. 
This property is illustrated in the numerical experiments in the next
section, all of which concern the case $\delta\sim \Hsub\sim H$. These experiments also explore the sharpnesss of the result \eqref{eq:conv_est} in the two cases: 
(i) $\vert \eps\vert $ decreases below  $k^2$ for fixed $H$ and 
(ii) $H$ increases  above $k^{-1}$ for fixed $\eps$. Our experiments show that there may be  room 
to  improve  the  theoretical results. 

While Corollary \ref{cor:final2} provides rigorous  estimates only for weighted GMRES, we see in  \S\ref{sec:Numerical} that
 there is, in fact, very little  difference between the results with weighted
GMRES and standard GMRES, so most of our experiments are for standard
GMRES. The difficulty of proving results about standard GMRES in the
context of domain decomposition for non self-adjoint problems was
previously investigated by other researchers; see, e.g., \cite{CaZo:02}.    

In the next section we also explore the use of $B_{\eps, AS}^{-1}$ as a 
preconditioner for $A$. A particularly effective preconditioner is
obtained with $H \sim k^{-1} $ and $\eps \approx k$. However this
preconditioner has a complexity dominated by the cost of inverting the
coarse mesh problem. A multilevel variant where the coarse problem is
approximated by an inner GMRES iteration (within the FGMRES format) is
also proposed and is demonstrated to be very efficient for solving finite
element approximations of the Helmholtz equation with $h \sim
k^{-3/2}$. 

Some of our experiments below use right preconditioning rather than
left preconditioning. Nevertheless, using the coerciveness for the adjoint
form in Corollary \ref{cor:adjoint}, we can obtain the following  result about right
preconditioning, however in the inner product induced by
$D_k^{-1}$. From this, the analogue of Corollary
\ref{cor:final2}, with $D_k$ replaced by $D^{-1}_k$, follows.

\begin{theorem}[Main result for right preconditioning]\label{thm:final3}
With the same notation as in Theorem \ref{thm:final}, we have  
\begin{align*}
(i)\quad \quad \Vert  A_\eps B_{\eps, AS}^{-1}  \Vert_{D_k^{-1}}  & \ \lesssim  \ \left(\ksqeps\right) \quad \text{for all} \quad H, \Hsub.
\end{align*}
Furthermore, provided condition \eqref{eq:E20rpt} holds, 
 \begin{align*}
(ii) \quad\quad\vert \langle \bV, 
A_\eps B_{\eps, AS}^{-1} \bV \rangle_{D_k^{-1}} \vert  & \ \gtrsim   \ 
\left(1 + \frac{H}{\delta}\right)^{-1} 
\left(\frac{\red{|}\eps\red{|}}{k^2} \right)^{2}\ 
\Vert \bV \Vert_{D_k^{\red{-1}}}^2, \quad\tfa \bV \in \Com^n.
\end{align*}
\end{theorem}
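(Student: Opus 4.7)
The plan is to reduce Theorem \ref{thm:final3} to the adjoint version of Theorem \ref{thm:final1} and then to pass from the $D_k$-inner product to the $D_k^{-1}$-inner product by a short duality argument. The reason this should work cleanly is that Corollary \ref{cor:adjoint} gives the adjoint form $a_\eps^*$ exactly the same coercivity estimate as $a_\eps$, and continuity of $a_\eps^*$ is immediate from Lemma \ref{lem:cont}; since every step of \S\ref{sec:Convergence} and \S\ref{sec:Matrices} uses only these two properties of the sesquilinear form, the entire chain of reasoning leading to Theorem \ref{thm:final1} is available verbatim with $a_\eps$ replaced by $a_\eps^*$.

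Concretely, I would first define adjoint projections $P_{\eps,\ell}:H^1(\Omega)\to \cV_\ell$ by $a_\eps^*(P_{\eps,\ell} v, w_{h,\ell}) = a_\eps^*(v, w_{h,\ell})$ for all $w_{h,\ell}\in \cV_\ell$, set $P_\eps = \sum_{\ell=0}^N P_{\eps,\ell}$, and rerun Theorems \ref{thm:boundQ} and \ref{thm:final} to obtain $\|P_\eps v_h\|_{1,k} \lesssim (k^2/|\eps|)\,\|v_h\|_{1,k}$ and, under condition \eqref{eq:E20rpt}, $|(v_h, P_\eps v_h)_{1,k}| \gtrsim (1+H/\delta)^{-1}(|\eps|/k^2)^2\,\|v_h\|_{1,k}^2$. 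The proof of Theorem \ref{thm:repQ} then gives the matrix representation of $P_\eps$ as $B_{\eps,AS}^{-*} A_\eps^*$: indeed the local stiffness matrix of the adjoint form is $R_\ell A_\eps^* R_\ell^T = A_{\eps,\ell}^*$, and the sum of the corresponding local corrections equals $\big(\sum_\ell R_\ell^T A_{\eps,\ell}^{-*}R_\ell\big)A_\eps^* = B_{\eps,AS}^{-*} A_\eps^*$. Via the translation in Theorem \ref{ipnormQ}, this yields the adjoint version of Theorem \ref{thm:final1}: $\|B_{\eps,AS}^{-*} A_\eps^*\|_{D_k} \lesssim k^2/|\eps|$ together with the corresponding lower bound on the $D_k$-field of values of $B_{\eps,AS}^{-*} A_\eps^*$.

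The final step is a one-line change of variables. For any Hermitian positive definite $D$ and any square matrix $C$, substituting $\bV = D^{1/2}\bU$ gives $\|C\|_{D^{-1}} = \|D^{-1/2} C D^{1/2}\|_2 = \|(D^{-1/2} C D^{1/2})^*\|_2 = \|C^*\|_D$, and similarly identifies $W_{D^{-1}}(C)$ with the Euclidean numerical range of $\tilde C := D^{-1/2} C D^{1/2}$ and $W_D(C^*)$ with that of $\tilde C^*$. Since $\bU^* M^* \bU = \overline{\bU^* M \bU}$ for every $M$, these two numerical ranges are complex conjugates of each other and therefore equidistant from the origin. Applying both identities with $D = D_k$ and $C = A_\eps B_{\eps,AS}^{-1}$ (so that $C^* = B_{\eps,AS}^{-*} A_\eps^*$) converts parts (i) and (ii) of the adjoint Theorem \ref{thm:final1} directly into the required bounds for $A_\eps B_{\eps,AS}^{-1}$ in the $D_k^{-1}$-inner product.

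I do not expect any genuine obstacle. The only place that demands care is verifying that the matrix representation of $P_\eps$ really is $B_{\eps,AS}^{-*} A_\eps^*$ (with $A_\eps^*$ on the right) rather than some other ordering: this comes out correctly because the test functions in the definition of $P_{\eps,\ell}$ live in $\cV_\ell$, so the restriction-and-prolongation pattern of Theorem \ref{thm:repQ} applies, now with $A_\eps^*$ in place of $A_\eps$ and with local solves inverting $A_{\eps,\ell}^*$.
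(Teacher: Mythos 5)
Your proposal is correct and follows essentially the same route as the paper: reduce to the Hermitian adjoint matrix $B_{\eps,AS}^{-*}A_\eps^*$, invoke the coercivity of the adjoint form (Corollary \ref{cor:adjoint}) to get the adjoint version of Theorem \ref{thm:final1}, and then pass between the $D_k$- and $D_k^{-1}$-inner products by a change of variables. The paper's proof uses the substitution $\bW = D_k^{-1}\bV$ directly in the field-of-values ratio, whereas you use $\bV = D^{1/2}\bU$ to obtain the operator identities $\|C\|_{D^{-1}} = \|C^*\|_D$ and $\operatorname{dist}\bigl(0,W_{D^{-1}}(C)\bigr) = \operatorname{dist}\bigl(0,W_D(C^*)\bigr)$; these are the same duality argument, and your spelled-out version (defining $P_{\eps,\ell}$ via $a_\eps^*$ and verifying the matrix representation $B_{\eps,AS}^{-*}A_\eps^*$) is a fully explicit unpacking of what the paper leaves implicit in the phrase "the coercivity of the adjoint form \ldots ensures that the estimate \ldots also holds for the adjoint matrix."
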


\begin{proof}
To simplify the notation, we write $B_\eps^{-1}$ instead of $B_{\eps, AS}^{-1}$. 
An easy calculation shows that for all $\bV \in \bC^n$ and with $\bW =
D_k^{-1} \bV$, we have
$$
\frac{\vert \langle \bV, A_\eps B_\eps^{-1} \bV \rangle_{D_k^{-1}}\vert} 
{  \langle \bV , \bV\rangle_{D_k^{-1}}}\  =\ 
\frac{\vert \langle  (B^*_\eps)^{-1} A_\eps^* \bW, \bW \rangle_{D_k}\vert} 
{  \langle \bW , \bW\rangle_{D_k}}  \  =\ 
\frac{\vert \langle \bW,  (B^*_\eps)^{-1} A_\eps^* \bW\rangle_{D_k}\vert} 
{  \langle \bW , \bW\rangle_{D_k}} , $$
where    $A^*$, $(B_\eps^*)^{-1}$ are the Hermitian transposes of $A,
B_\eps^{-1}$ respectively.  
The coercivity of the adjoint form proved in Corollary
\ref{cor:adjoint} then ensures that the estimate in Theorem 
\ref{thm:final1} (ii) also holds for the adjoint matrix and the result
(ii) then follows.  The result (i) is obtained analogously from taking
the adjoint and using Theorem  \ref{thm:final1} (i).
\end{proof} 

\begin{corollary}[GMRES convergence for right preconditioning]\label{cor:final4}
Under the same assumptions,  the result of Corollary \ref{cor:final2} still 
holds when left preconditioning is replaced by right preconditioning.  
\end{corollary}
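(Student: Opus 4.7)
The plan is simply to apply the abstract GMRES convergence result of Theorem \ref{Thm:A1} to the right-preconditioned system, using Theorem \ref{thm:final3} to supply the required norm and field-of-values estimates, in direct analogy with how Corollary \ref{cor:final2} follows from Theorem \ref{thm:final1}.

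First, I would set up the right-preconditioned system. If $A_\eps \bu = \bff$, the right-preconditioned system is $A_\eps B_{\eps,AS}^{-1}\tilde{\bu} = \bff$ with $\bu = B_{\eps,AS}^{-1}\tilde{\bu}$, so we apply Theorem \ref{Thm:A1} with $\matrixC = A_\eps B_{\eps,AS}^{-1}$ and with the weight matrix $D = D_k^{-1}$ (which is Hermitian positive definite since $D_k$ is). The hypothesis $0\notin W_{D_k^{-1}}(\matrixC)$ is furnished by part (ii) of Theorem \ref{thm:final3}, provided the condition \eqref{eq:E20rpt} is in force.

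Next, I would combine the two estimates from Theorem \ref{thm:final3} in the definition of $\cos\beta$ from Theorem \ref{Thm:A1}:
\begin{equation*}
\cos\beta \ = \ \frac{\mathrm{dist}(0,W_{D_k^{-1}}(A_\eps B_{\eps,AS}^{-1}))}{\|A_\eps B_{\eps,AS}^{-1}\|_{D_k^{-1}}} \ \gtrsim \ \left(1+\frac{H}{\delta}\right)^{-1} \left(\frac{|\eps|}{k^2}\right)^{2} \Big/ \left(\frac{k^2}{|\eps|}\right) \ = \ \left(1+\frac{H}{\delta}\right)^{-1} \left(\frac{|\eps|}{k^2}\right)^{3}.
\end{equation*}
Then $\sin^2\beta = 1 - \cos^2\beta \lesssim 1 - \bigl(1+H/\delta\bigr)^{-2}\bigl(|\eps|/k^2\bigr)^6$, and Theorem \ref{Thm:A1} yields
\begin{equation*}
\frac{\|\bfr^m\|_{D_k^{-1}}}{\|\bfr^0\|_{D_k^{-1}}} \ \leq \ \sin^m\beta \ \lesssim \ \left(1-\left(1+\frac{H}{\delta}\right)^{-2}\left(\frac{|\eps|}{k^2}\right)^6\right)^{m/2},
\end{equation*}
which is exactly the bound of Corollary \ref{cor:final2}, albeit with the residual measured in the $D_k^{-1}$-induced norm rather than in the $D_k$-induced norm.

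There is no serious obstacle: all of the technical work has already been done in Theorem \ref{thm:final3}, whose proof in turn relied on Corollary \ref{cor:adjoint} (coercivity of the adjoint form) to transfer the estimates of Theorem \ref{thm:final1} to the adjoint matrix. The only point to note is that the choice of weight $D_k^{-1}$ (rather than $D_k$) is essential here and appears naturally from the adjoint-transposition argument used in the proof of Theorem \ref{thm:final3}: writing $\bW = D_k^{-1}\bV$ converts a $D_k^{-1}$-inner product for $A_\eps B_{\eps,AS}^{-1}$ into a $D_k$-inner product for $(B_{\eps,AS}^*)^{-1}A_\eps^*$, to which the bounds of Theorem \ref{thm:final1} apply.
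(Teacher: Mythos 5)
Your proof is correct and follows exactly the route the paper intends: the paper states just before Theorem \ref{thm:final3} that "the analogue of Corollary \ref{cor:final2}, with $D_k$ replaced by $D_k^{-1}$, follows," and your argument applies Theorem \ref{Thm:A1} with $C=A_\eps B_{\eps,AS}^{-1}$ and $D=D_k^{-1}$, feeding in the two estimates of Theorem \ref{thm:final3}, which is precisely that. You also correctly flag the one subtlety left implicit in the corollary's wording, namely that the residual reduction bound is in the $D_k^{-1}$-weighted norm rather than the $D_k$-weighted norm.
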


\bre[The truncated sound-soft scattering problem]\label{rem:trunc}
We now outline how the results can be adapted to hold for the truncated sound-soft scattering problem. By this, we mean the exterior, homogeneous Dirichlet problem, with the radiation condition imposed as an impedance boundary condition on a far-field boundary. That is,
\begin{subequations}\label{eq:trunc}
\begin{align}
-\Delta u - (k^2 + \ri \eps)u &= f \quad \tin \Omega,\\
\dudn - \ri \eta u &= g \quad \ton \partial \Omega_R,\\
u&=0 \quad \ton \partial \Omega_D,
\end{align}
\end{subequations}
where $\Omega_D$ is the scatterer and $\Omega_R$ is a bounded Lipschitz domain with $\Omega_D\subset \Omega_R$. With $f=0$ and an appropriate choice of $g$, the solution of the above problem is a well-known approximation to the sound-soft scattering problem (see, e.g., \cite[Problem 2.4]{GaGrSp:13} for more details).

The variational formulation of this problem is almost identical to that of the interior impedance problem in \S\ref{sec:Var}, except now the Hilbert space is $\{ v\in H^1(\Omega) : v=0 \ton \partial \Omega_D\}$ and the integrals over $\Gamma$ in \eqref{eq:Helmholtzvf_intro} and \eqref{eq:F_intro} are over $\partial \Omega_R$. 
The essential Dirichlet boundary condition means that the nodes on $\partial \Omega_D$ are no longer freedoms.
Thus, the domain decomposition technique for this problem are almost the same as those described in detail above, except that the subdomains will have Dirichlet conditions not only at interior boundaries, but also on any part of their boundary that intersects with $\partial \Omega_D$.
The rest of the results in \S\ref{sec:Var}-\S\ref{sec:Convergence} go through as before, and therefore analogues of Theorems \ref{thm:final1} and \ref{thm:final3} and Corollaries \ref{cor:final2} and \ref{cor:final4} hold for the truncated problem.
\ere

\section{Numerical Experiments}
\label{sec:Numerical}

Our numerical experiments discuss  the solution of \eqref{eq:discrete} on
the unit square, with $\eta = k$, 
discretised by the continuous linear finite element
method on a uniform triangular mesh of diameter $h$. 
The problem to be solved is thus specified by the choice of $h$ and $\eps$ which we denote by  
\begin{equation}\label{eq:prblemspec}
\hprob  \quad \text{and} \quad \epsprob\ . 
\end{equation}
\red{We will discuss the case $\epsprob>0$ in Experiment 1 (with results in Tables \ref{tab:1afinal}, \ref{tab:1bfinal}, \ref{tab:1cfinal}); the empirical observations from these results are then used to motivate a preconditioner for the pure Helmholtz problem ($\epsprob=0$) in Experiments 2 and 3 (with results in Tables \ref{tab:2afinal}, \ref{tab:2bfinal}, \ref{tab:3a}, \ref{tab:IOImpHRAS}).}

We will be focused on solving systems with     $\hprob \sim   k^{-3/2}$ 
(the discretisation level  generally believed to remove the pollution effect; see, e.g., the literature reviews in \cite[Remark 4.2]{GaGrSp:13} and \cite[\S1.2.2]{GrLoMeSp:15}),  
however the case  $\hprob \sim k$ (a fixed number of grid points per wavelength) appears as a relevant subproblem when we construct multilevel methods in Experiment 3 below.
 
In the general theory given in \S \ref{sec:DD},  
coarse grid size $H$ and subdomain  size $\Hsub$  
are  unrelated. but in our  
experiments here we construct  
local subdomains  
by taking each of the elements of the coarse grid and extending them to 
obtain an
overlapping cover with overlap parameter $\delta$. This is chosen  
as large as possible, but with the restriction no two extended subdomains can touch unless they came from touching elements of the original coarse grid.   In this scenario    
$\delta \sim H$ (generous overlap), $\Hsub \sim H$ 
and our preconditioners are thus determined by choices 
of  $H$ and $\eps$, which we denote by 
\begin{equation}\label{eq:precspec}
\Hprec  \quad \text{and} \quad \epsprec\ . 
\end{equation}

In our preconditioners the coarse grid problem is of size $\sim \Hprec^{-2}$ and there are $\sim \Hprec^{-2}$  
local problems  of size $(\Hprec/\hprob)^2$.   If there were no overlap,  the method would  be 
``perfectly load balanced''
(i.e.~local problems of the same size as the coarse problem) 
when  $\Hprec= \hprob^{1/2}$. Thus, for load balancing, 
\beq
\Hprec \sim k^{-3/4} \quad \text{when} \quad \hprob \sim k^{-3/2}. \label{eq:loadbal1}
\eeq
(Because of overlap, the local problems are larger than estimated in \eqref{eq:loadbal1}, and  the method is in fact loadbalanced for 
somewhat  finer coarse meshes than those predicted in \eqref{eq:loadbal1}.
We will investigate both cases when $\epsprob$ and $\epsprec$ 
are equal and cases when $\epsprec>\epsprob=0$. The question of greatest practical interest  
is: if $\epsprob = 0$, how to choose $\epsprec$ and $\Hprec$ 
in order to maximise the efficiency of  
the preconditioner?  This question is addressed towards the end of these experiments. but first we illustrate    the theoretical results in \S\ref{sec:Matrices} which are about the case    $\epsprec = \epsprob \not=0$.

The first preconditioner considered is the {\em Classical Additive Schwarz} ({\tt AS})
preconditioner defined in \eqref{eq:defAS}. We will also be interested in variants of this that replace the
local component  \eqref{eq:minor} with something else. 
The first variant involves averaging   
 in the overlap of the subdomains. For each fine grid node $x_j  \ \ (j \in \cI^h)$,  let
 $L_j$ denote the number of subdomains which contain  $x_j$. 
Then the local operator is:  
$$
(\matrixB_{\eps,AVE,local}^{-1}\bfv)_j  \ = \  \frac{1}{L_j} \sum _{\ell: \, x_j \in {\Omega_\ell}}
\left(R_\ell^T A_{\eps,\ell}^{-1} R_\ell\bfv\right)_j\ , \quad
\text{for each} \quad j \in \cI^h\ , 
$$
and  the corresponding 
  {\em Averaged Additive Schwarz} ({\tt AVE}) preconditioner   is: 
\begin{equation}\label{eq:defAVE}
\matrixBepsAVE  \ = \ R_0^T A_{\eps,0}^{-1}
R_0  \  +\  \matrixB_{\eps,AVE,local}^{-1} . 
\end{equation}

The second variant is  the {\em Restrictive Additive Schwarz} ({\tt RAS})
 preconditioner, which is well-known in the literature  \cite{CaSa:99}, \cite{KiSa:13}. Here  
to define the local operator,  
 for each $j \in \cI^h$, choose a single $\ell =
\ell(j)$ with the property that $x_j \in \Omega^{\ell(j)}$.  
Then  
the action of the local contribution,  for each vector of fine grid freedoms  $\bfv$, is: 
  \begin{equation}
\label{eq:RASlocal}
(\matrixB_{\eps,RAS,local}^{-1}\bfv)_j  \ = \ 
\left(R_{\ell(j)}^T A_{\eps,\ell(j)}^{-1} R_{\ell(j)}\bfv\right)_j\ , \quad
\text{for each} \quad j \in \cI^h\ ,
\end{equation}
and the RAS preconditioner is  
\begin{equation}\label{eq:defRAS}
\matrixBepsRAS  \ = \ R_0^T A_{\eps,0}^{-1}
R_0  \  +\  \matrixB_{\eps,RAS,local}^{-1}\ .
\end{equation}

All three of these variants of Additive Schwarz can be used in a {hybrid} way. 
This means that instead of doing all the local and coarse grid problems independently 
(and thus potentially in 
parallel), we  first do  a 
coarse solve and then perform the   local solves on the residual of the coarse solve. This was first introduced in  \cite{MaBr:96}. As described in \cite{GrSc:07},   
this is
closely related to the deflation method \cite{NaVu:04},  which has been used recently
to good effect in the context of shifted Laplacian combined with multigrid 
\cite{ShLaVu:13}. 
We will show results for the Hybrid RAS ({\tt HRAS}) preconditioner which takes the form 

\begin{equation}\label{eq:HRAS}
\matrixB_{\eps,{HRAS}}^{-1} :=  \matrixR_0^T \matrixA_{\eps,0}^{-1} \matrixR_0  +  \matrixP_0^T \left(  \matrixB_{\eps,RAS,local}^{-1} \right)  \matrixP_0 
\  , \end{equation}
where 
 $$\matrixP_0 \ = \ \matrixI - \matrixA_\eps \matrixR_0^T
 \matrixA_{\red{0}}^{-1} \matrixR_0\ . $$ 

All our results are obtained with GMRES without restarts, with the implementation done in python.
The results
in Experiment 1  
are obtained both with  left preconditioning  and with  
right preconditioning (flexible GMRES); the relevant python codes are   
{\tt scipy.sparse.linalg.gmres} and {\tt pyamg.krylov.fgmres} respectively.
The other two experiments are done with right preconditioning.
In all experiments we use standard GMRES, which minimises the residual
in the standard Euclidean inner product. However, motivated by Theorem  
\ref{thm:final1} and Corollary \ref{cor:final2},  Experiment 1 
also discusses the results of applying  left preconditioned GMRES  in the inner product induced by the matrix 
$D_k$. For this we use the algorithm described in \cite{GuPe:14}, editing an existing GMRES code to work with the inner product induced by $D_k$.  
In all experiments the
starting guess is zero and the   
residual reduction tolerance is set at 
$10^{-6}$.   

\subsection{Experiment 1\  }

\begin{table}[h]
\begin{center}

\begin{tabular}{|c||c|c|c|c|}
\hline 
& \multicolumn{4}{|c|}{Left preconditioning}\tabularnewline
\hline
& \multicolumn{4}{|c|}{$\alpha=1$}\tabularnewline
\hline 
$k$ & $\#_{AS}$ & $\#_{AVE}$ & $\#_{RAS}$ & $\#_{HRAS}$\tabularnewline
\hline 
10 &  21 & 15 & 15 &         8\tabularnewline
20 & 20 & 15 & 15 &         8\tabularnewline
40 & 21 & 16 & 16 &         9\tabularnewline
60 & 21 & 16 & 16 &         9\tabularnewline
80 & 26 & 18 & 16 &         9\tabularnewline
100 & 21 & 17 & 16 &       9\tabularnewline
\hline 
\multicolumn{5}{|c|}{$\alpha=0.9$}\tabularnewline
\hline 
$k$ & $\#_{AS}$ & $\#_{AVE}$ & $\#_{RAS}$ & $\#_{HRAS}$\tabularnewline
\hline 
10 & 19 & 15 & 15 &         8\tabularnewline
20 &  23 & 18 & 18 &         9\tabularnewline
40 &  27 & 21 & 19 &        10\tabularnewline
60 &  25 & 20 & 20 &        10\tabularnewline
80 & 25 & 21 & 20 &        10\tabularnewline
100 &  25 & 21 & 20 &     10\tabularnewline
\hline 
\multicolumn{5}{|c|}{$\alpha=0.8$}\tabularnewline
\hline 
$k$ &  $\#_{AS}$ & $\#_{AVE}$ & $\#_{RAS}$ & $\#_{HRAS}$\tabularnewline
\hline 
10 &  19 & 15 & 14 &         8\tabularnewline
20 &  21 & 18 & 17 &         9\tabularnewline
40 & 23 & 22 & 19 &        10\tabularnewline
60 &  21 & 20 & 19 &        11\tabularnewline
80 & 21 & 20 & 19 &        11\tabularnewline
100 &  22 & 23 & 19 &     11\tabularnewline
\hline 
\end{tabular}~%
\begin{tabular}{|c|c|c|c|}
\hline 
\multicolumn{4}{|c|}{Right preconditioning}\tabularnewline
\hline
\multicolumn{4}{|c|}{ $\alpha=1$}\tabularnewline
\hline 
$\#_{AS}$ & $\#_{AVE}$ & $\#_{RAS}$ & $\#_{HRAS}$\tabularnewline
\hline 
  21 &  15 &  15 &         8\tabularnewline
  19 &  15 &  15 &         8\tabularnewline
  19 &  16 &  15 &         8\tabularnewline
  19 &  16 &  15 &         8\tabularnewline
  23 &  17 &  15 &         8\tabularnewline
  19 &  16 &  15 &         8\tabularnewline
\hline 
\multicolumn{4}{|c|}{ $\alpha=0.9$}\tabularnewline
\hline 
$\#_{AS}$ & $\#_{AVE}$ & $\#_{RAS}$ & $\#_{HRAS}$\tabularnewline
\hline 
  19 &  15 &  15 &         8\tabularnewline
  21 &  18 &  17 &         8\tabularnewline
  24 &  19 &  17 &         9\tabularnewline
  21 &  20 &  18 &         9\tabularnewline
  21 &  20 &  18 &         9\tabularnewline
  21 &  20 &  18 &         9\tabularnewline
\hline 
\multicolumn{4}{|c|}{ $\alpha=0.8$}\tabularnewline
\hline 
 $\#_{AS}$ & $\#_{AVE}$ & $\#_{RAS}$ & $\#_{HRAS}$\tabularnewline
\hline 
  18 &  15 &  14 &         8\tabularnewline
  20 &  18 &  17 &         9\tabularnewline
  20 &  20 &  17 &        10\tabularnewline
  18 &  19 &  17 &        10\tabularnewline
  18 &  19 &  17 &        10\tabularnewline
  18 &  20 &  17 &        10\tabularnewline
\hline 
\end{tabular}
\end{center}
\caption{ Number of iterations for various preconditioners with  $\hprob =
  k^{-3/2}$,    $\epsprob = \epsprec = k^2$,   $\Hprec = k^{-\alpha}$ }
\label{tab:1afinal} 
\end{table}

Here we  solve  \eqref{eq:discrete} with $\bf{f}= \bf{1}$, $\hprob = k^{-3/2}$ ,with various choices of  $ \epsprob = \epsprec$ and 
 $\Hprec$. We     use
triangular coarse grids. The results  in Section
\ref{sec:Matrices}
tell us that,  
provided $$\epsprob = \epsprob\sim k^2 \quad \text{and} \quad k \Hprec \sim 1, $$ 
then the number of GMRES iterations 
with the  preconditioner {\tt AS}  
will remain bounded as $k \rightarrow \infty$. 
Our first set of results are for the regular (Euclidean inner product) GMRES algorithm.  
 Tables \ref{tab:1afinal}, \ref{tab:1bfinal} and  \ref{tab:1cfinal}  
give  results for  a range of $\epsprob = \epsprec$ and $\Hprec$, assuming that $\Hprec=k^{-\alpha}$ for different choices of $\alpha$. 
The number of iterations of the Classical Additive Schwarz method is denoted by    $\#_{AS}$, while the number of iterations  
for the variants using  averaging, RAS and Hybrid RAS  
 are denoted 
$\#_{AVE}$,   $\#_{RAS}$ and $\#_{HRAS}$.

In Table \ref{tab:1afinal},  the results  for $\alpha = 1$  confirm  
the  result of Corollaries \ref{cor:final2} and \ref{cor:final4}. The other parts of this  
table show that  in fact when $\epsprob = \epsprec =  k^2$ then 
the iteration counts remain bounded as $k$
increases for a range of $\Hprec$ chosen to decrease more slowly with $k$ 
than  the theoretical requirement of   $\mathcal{O}(k^{-1})$. Thus,  if
there is enough absorption, the preconditioner still works well for 
solving the shifted system,   even for much  coarser coarse meshes than those predicted by Corollaries \ref{cor:final2} and \ref{cor:final4}. 
The case $\Hprec = k^{-0.8}$ is close to 
being load balanced (see \eqref{eq:loadbal1} and the remarks following). To give an idea of the sizes of the systems involved, when $\Hprec = k^{-0.8} $ and $k = 120$ the size of the fine grid problem 
is $n=1782225$ while the size of the coarse grid 
problem is $2116$ and there are  $2025$ local
problems of maximal size $3364$ to be solved. 
We also note the overall improvement as we compare different  preconditioners in the sequence 
{\tt AS},  {\tt AVE}, {\tt RAS}, {\tt HRAS}.  

Then Tables \ref{tab:1bfinal} and \ref{tab:1cfinal} repeat the same
experiments for the cases $\epsprob = \epsprec = k $ and $\epsprob = \epsprec = 1 $. We observe
here that when $\Hprec = k^{-1}$,   both 
methods continue to  work quite well (although the number of
iterations does grow  mildly with $k$), however for coarser coarse meshes  
$\Hprec = k^{-\alpha}$
 with $\alpha < -1$,  the method quickly becomes unusable.   The general superiority of {\tt
  HRAS} over the other methods is striking. A * in the tables indicates that the number of iterations was above
200.

\begin{table}[h]
\begin{center}

\begin{tabular}{|c||c|c|c|c|}
\hline 
& \multicolumn{4}{|c|}{Left preconditioning}\tabularnewline
\hline
& \multicolumn{4}{|c|}{$\alpha=1$}\tabularnewline
\hline 
$k$ & $\#_{\tt AS}$ & $\#_{\tt AVE}$ & $\#_{\tt RAS}$ & $\#_{\tt HRAS}$\tabularnewline
\hline 
10 & 25  & 16 & 16 &         10\tabularnewline
20 & 25 & 20 & 20 &         11\tabularnewline
40 & 35 & 30 & 31 &         16\tabularnewline
60 & 46 & 41 & 42 &         22\tabularnewline
80 & 67 & 56 & 55 &         30\tabularnewline
100 & 75& 69 & 70 &        38\tabularnewline
\hline 
\multicolumn{5}{|c|}{$\alpha=0.9$}\tabularnewline
\hline 
$k$ & $\#_{\tt AS}$ & $\#_{\tt AVE}$ & $\#_{\tt RAS}$ & $\#_{\tt HRAS}$\tabularnewline
\hline 
10 & 22 & 18 & 18 & 10         \tabularnewline
20 &  37 & 27 & 28 & 14         \tabularnewline
40 &  118 & 45 & 85 &        24\tabularnewline
60 &  * & 171 & 192 &        40\tabularnewline
80 & * & * & * &        61\tabularnewline
100 &*  & *& *&        97\tabularnewline
\hline 
\multicolumn{5}{|c|}{$\alpha=0.8$}\tabularnewline
\hline 
$k$ &  $\#_{\tt AS}$ & $\#_{\tt AVE}$ & $\#_{\tt RAS}$ & $\#_{\tt HRAS}$\tabularnewline
\hline 
10 &  23 & 18 & 19 &         12\tabularnewline
20 &  40 & 33 & 35 &         18\tabularnewline
40 & 173 & 140 & 190 &        122\tabularnewline
60 &  * & * & * &        *\tabularnewline
80 & * & * & * &        *\tabularnewline
100 & *  & * & * & *\tabularnewline
\hline 
\end{tabular}~%
\begin{tabular}{|c|c|c|c|}
\hline 
\multicolumn{4}{|c|}{Right preconditioning}\tabularnewline
\hline
\multicolumn{4}{|c|}{ $\alpha=1$}\tabularnewline
\hline 
$\#_{\tt AS}$ & $\#_{\tt AVE}$ & $\#_{\tt RAS}$ & $\#_{\tt HRAS}$\tabularnewline
\hline 
  25 &  17 &  16 &         9\tabularnewline
  24 &  20 &  20 &         11\tabularnewline
  32 &  28 &  28 &         14\tabularnewline
  40 &  38 &  37 &         19\tabularnewline
  56 &  50 &  48 &         24\tabularnewline
  64 &  63 &  61 &          31\tabularnewline
\hline 
\multicolumn{4}{|c|}{ $\alpha=0.9$}\tabularnewline
\hline 
$\#_{\tt AS}$ & $\#_{\tt AVE}$ & $\#_{\tt RAS}$ & $\#_{\tt HRAS}$\tabularnewline
\hline 
  22 &  18 &  18&         10\tabularnewline
  35 &  27 &  27 &         13\tabularnewline
  107 &  42 &  83 &         21\tabularnewline
  \hspace{0ex}* &  175 &  187 &         35\tabularnewline
  \hspace{0ex}* &  * &  * &         54\tabularnewline
  \hspace{0ex}*& * & * & 86\tabularnewline
\hline 
\multicolumn{4}{|c|}{ $\alpha=0.8$}\tabularnewline
\hline 
 $\#_{\tt AS}$ & $\#_{\tt AVE}$ & $\#_{\tt RAS}$ & $\#_{\tt HRAS}$\tabularnewline
\hline 
  23 &  18 &  18 &         12\tabularnewline
  37 &  33 &  34 &         17\tabularnewline
  153 &  130 &  177 &        116\tabularnewline
    \hspace{0ex}* &  * &  * &        *\tabularnewline
    \hspace{0ex}* &  * &  * &        *\tabularnewline
  \hspace{0ex}*  & * &*  &* \tabularnewline
\hline 
\end{tabular}
\end{center} 
\caption{Number of iterations for various preconditioners with $\hprob =
  k^{-3/2}$,   $\epsprob = \epsprec = k$,    $\Hprec = k^{-\alpha}$ }
\label{tab:1bfinal}
\end{table}

\begin{table}[h]
\begin{center}
\begin{tabular}{lr}
\begin{tabular}{|c||c|c|c|c|}
\hline 
& \multicolumn{4}{|c|}{Left preconditioning}\tabularnewline
\hline
& \multicolumn{4}{|c|}{$\alpha=1$}\tabularnewline
\hline 
$k$ & $\#_{\tt AS}$ & $\#_{\tt AVE}$ & $\#_{\tt RAS}$ & $\#_{\tt HRAS}$\tabularnewline
\hline 
10 &  26 & 17 & 17 &         10\tabularnewline
20 & 25 & 21 & 21 &         12\tabularnewline
40 & 37 & 32 & 32 &         17\tabularnewline
60 & 49 & 44 & 45 &         24\tabularnewline
80 & 74 & 63 & 61 &         33\tabularnewline
100 &  83 & 78 & 79 &      43 \tabularnewline
\hline 
\multicolumn{5}{|c|}{$\alpha=0.9$}\tabularnewline
\hline 
$k$ & $\#_{\tt AS}$ & $\#_{\tt AVE}$ & $\#_{\tt RAS}$ & $\#_{\tt HRAS}$\tabularnewline
\hline 
10 & 23 & 18 & 18 &         10\tabularnewline
20 &  38 & 28 & 30 &         14\tabularnewline
40 &  134  & 49  & 96 &        26\tabularnewline
60 &  *  & * & * &        44\tabularnewline
80 & * & * & * &        71\tabularnewline
100 & *  & * & * & 115 \tabularnewline
\hline 
\multicolumn{5}{|c|}{$\alpha=0.8$}\tabularnewline
\hline 
$k$ &  $\#_{\tt AS}$ & $\#_{\tt AVE}$ & $\#_{\tt RAS}$ & $\#_{\tt HRAS}$\tabularnewline
\hline 
10 &  23 & 20 & 19 &         13\tabularnewline
20 &  42 & 35 & 39 &         19\tabularnewline
40 & *  & 194 & * &        182\tabularnewline
60 &  * & * & * &        *\tabularnewline
80 & * & * & * &        *\tabularnewline
100 & *  &* & * & *\tabularnewline
\hline 
\end{tabular}~%
&
\begin{tabular}{|c|c|c|c|}
\hline 
\multicolumn{4}{|c|}{Right preconditioning}\tabularnewline
\hline
\multicolumn{4}{|c|}{ $\alpha=1$}\tabularnewline
\hline 
$\#_{\tt AS}$ & $\#_{\tt AVE}$ & $\#_{\tt RAS}$ & $\#_{\tt HRAS}$\tabularnewline
\hline 
  26 &  17 &  17 &         10\tabularnewline
  24 &  21 &  20 &         11\tabularnewline
  33 &  30 &  30 &         15\tabularnewline
  43 &  41 &  40 &         20\tabularnewline
  62  &  56 &  53 &         27\tabularnewline
  71&  70&  68& 34 \tabularnewline
\hline 
\multicolumn{4}{|c|}{ $\alpha=0.9$}\tabularnewline
\hline 
$\#_{\tt AS}$ & $\#_{\tt AVE}$ & $\#_{\tt RAS}$ & $\#_{\tt HRAS}$\tabularnewline
\hline 
  22 &  18 &  18 &         10\tabularnewline
  37 &  28 &  28 &         13\tabularnewline
  121 &  45  &  93 &         23\tabularnewline
    \hspace{0ex}* &  * &  * &         39\tabularnewline
  \hspace{0ex}* &  * &  * &         62\tabularnewline
  \hspace{0ex}*  & * & * & 101\tabularnewline
\hline 
\multicolumn{4}{|c|}{ $\alpha=0.8$}\tabularnewline
\hline 
 $\#_{\tt AS}$ & $\#_{\tt AVE}$ & $\#_{\tt RAS}$ & $\#_{\tt HRAS}$\tabularnewline
\hline 
  23 &  20 &  19 &         13\tabularnewline
  40 &  35 &  36 &         17\tabularnewline
  187  &  186  &  * &        181\tabularnewline
  \hspace{0ex}* &  * &  * &        *\tabularnewline
  \hspace{0ex}*  &  * &  * &        *\tabularnewline
  \hspace{0ex}*  & * & * & * \tabularnewline
\hline 
\end{tabular}
\end{tabular}
\end{center} 
\caption{Number of iterations for various preconditioners with
$\hprob =  k^{-3/2}$,   $\epsprob = \epsprec = 1$,   $\Hprec = k^{-\alpha}$}
\label{tab:1cfinal}
\end{table}

To finish Experiment 1, we  repeated the experiments above with left
preconditioning but where the GMRES algorithm minimises the residual
in the norm induced by $D_k$. The resulting iteration counts  
were almost identical to  those  given in Tables \ref{tab:1afinal}, \ref{tab:1bfinal} and \ref{tab:1cfinal}, so we do not give them here. 

\red{Note that the results in Table \ref{tab:1bfinal}, especially \red{the columns corresponding to}  
$\alpha=1$ and HRAS, show that $B^{-1}_k$ is a good (although admittedly not perfect) preconditioner for $A_k$. Based on \cite{GaGrSp:13} this strongly suggests $B^{-1}_k$ will be a good preconditioner for $A$ (recall properties (i) and (ii) in the introduction); we see that this is indeed the case in the next experiment which is about preconditioners for $A$.}

While Experiment 1 illustrates very well  
our theoretical results about preconditioning the problem  with absorption 
(i.e.~the matrix $A_\eps$), the ultimate goal
of  our  work 
is to  determine the best preconditioner for the  problem 
without absorption (i.e.~the matrix $A$).  
Therefore, the rest of our experiments focus on investigating  this question, and so from now on
we take $\eps_{\mathrm{prob}} = 0$. 
Also, in the  experiments above,   {\tt HRAS} outperformed  all the other 
preconditioners and so  
in the rest of our experiments we restrict attention to  {\tt HRAS}. 

Moreover, remembering that the local solves in $B_{\eps,RAS,local}^{-1}$ are
solutions of local problems with a Dirichlet condition on interior
boundaries of subdomains, and noting that these are not expected to perform 
well for genuine wave propagation (i.e.~$\eps$  small), 
we also  consider the use of impedance boundary  conditions on the local solves.
We therefore  
introduce the  sesquilinear form local to the subdomain $\Omega_\ell$,
defined as the following local equivalent of
\eqref{eq:Helmholtzvf_intro}:  
$$ a_{\eps,Imp,\ell}(v,w) \ = \ \int_{\Omega_\ell} \nabla v . \nabla
\overline{w}  -    (k^2 + i \eps) \int_{\Omega_\ell} v \overline{w} - \ri
k \int_{\partial \Omega_\ell} v \overline{w}, 
$$
(remember that we are choosing $\eta = k$ in all the experiments). We let $A_{\eps,Imp,\ell}$ be the stiffness matrix arising from this form,
i.e.
$$(A_{\eps,Imp,\ell})_{j,j'} \ = \ a_{\eps,Imp, \ell}(\phi_{j'}, \phi_{j})\ ,
\quad j,j' \in \cI(\overline{\Omega_\ell}) .$$ 
This can be used as a local operator in any of the preconditioners 
introduced above. For example if it is inserted into the {\tt HRAS} operator 
\eqref{eq:HRAS},  then the one-level variant  is 
  \begin{equation}
\label{eq:ImpRASlocal}
(\matrixB_{\eps,Imp,RAS,local}^{-1}\bfv)_j  \ = \ 
\left(\widetilde{R}_{\ell(j)}^T A_{\eps,Imp,\ell(j)}^{-1} \widetilde{R}_{\ell(j)}\bfv\right)_j\ , \quad
\text{for each} \quad j \in \cI^h\ .
\end{equation}
\red{Here (noting the distinction with \eqref{eq:minor}), $\widetilde{R}_\ell$ denotes the 
restriction operator $(\tilde{R}_{\ell})_{j,j'} = \delta_{j,j'}$,  (as before)   $j'$ ranges  
over all  $\cI^h$, but now   
$j$ runs over all  indices such that  $x_j \in \overline{\Omega}_\ell \backslash{\Gamma}$.} The hybrid two-level variant is 
\begin{equation}\label{eq:HRASImp}
\matrixB_{\eps,{Imp, HRAS}}^{-1} :=  \matrixR_0^T \matrixA_{\eps,0}^{-1} \matrixR_0  +  \matrixP_0^T \left(  \matrixB_{\eps,Imp,RAS,local}^{-1} \right)  \matrixP_0 
\  . \end{equation}
We refer to these as the one- and two-level {\tt ImpHRAS} preconditioners. 
\vspace{0.5cm}

\subsection{Experiment 2}   
In Tables \ref{tab:2afinal} and \ref{tab:2bfinal} below,  we illustrate the performance of these preconditioners 
with various choices of $\epsprec$  when solving  the problem \eqref{eq:discrete}
with $\epsprob = 0$ and $\hprob = k^{-3/2}$. 
Here we   use rectangular coarse grids and subproblems and employ right 
(FGMRES) preconditioning (although the performance with triangular grids and left 
preconditioning is similar). 
    In order to ensure our  problem \eqref{eq:discrete} 
has  physical significance,   
we choose the data $f,g$ in \eqref{eq:F_intro}  
so that the exact solution of problem \eqref{eq:vf_intro} is a plane wave $u(x) = \exp(\ri k x.\hat{d})$ where $\hat{d} = (1/\sqrt{2},1/\sqrt{2})^T$.  
In Tables \ref{tab:2afinal} and \ref{tab:2bfinal} respectively, 
iteration counts for the two level versions of {\tt HRAS} and {\tt ImpHRAS}  
are given, {with the counts for the corresponding one level method 
given as  subscripts}.  From these tables we make the following observations: 
\begin{enumerate}
\item When $\Hprec = k^{-1}$, the two-level versions of both   {\tt HRAS} and {\tt ImpHRAS} 
perform quite well, although the number of iterations does  grow mildly with $k$.  The corresponding one-level versions perform  poorly, showing that the coarse grid operator is doing a good job in this scenario. The choice of $\epsprec$ has minimal effect (except that it appears that 
 $\epsprec$ should not be chosen much bigger than  $ k^{1.5}$.  
\item   When $\Hprec = k^{-\alpha}$ and $\alpha<1$, 
  {\tt HRAS} becomes unusable. {\tt ImpHRAS} also degrades as $\alpha$ decreases but 
then starts to improve again, and   at $\alpha = 0.6$ provides a
reasonably efficient solver with very slow growth of iterations with
$k$. Here  the two-grid variant is not 
much better than the one-grid variant,  due to the fact that the coarse grid problem 
has become very coarse (when $k = 80$, $n= 511225$,  the size of the
coarse grid problem is $196$,  and the size of each of the largest  
local problem is $10404$. 
\end{enumerate}

\begin{table}
\begin{center}
\begin{tabular}{|c||c|c|c|c|c|c|c|}
\hline 
\multicolumn{8}{|c|}{$\alpha=1$}\tabularnewline
\hline 
$k\backslash\beta$ & 0  & 0.4  & 0.8 & 1 & 1.2  & 1.6 & 2.0\tabularnewline
\hline 
10 & $10_{34}$  & $10_{34}$   & $11_{34}$ & $11_{34}$ & $12_{34}$  & $15_{33}$  & $19_{34}$\tabularnewline
20 & $12_{92}$ & $12_{92}$ & $12_{92}$ & $12_{92}$ & $13_{92}$ &  $19_{92}$ &  $37_{93}$\tabularnewline
40 &  $18_{*}$ & $18_{*}$ & $18_{*}$ & $18_{*}$ & $18_{*}$ &  $25_{*}$ & $63_{*}$\tabularnewline
60  & $25_{*}$ & $25_{*}$ & $25_{*}$ & $25_{*}$ & $25_{*}$ &  $32_{*}$ & $86_{*}$\tabularnewline
80 & $34_{*}$ & $34_{*}$ &  $33_{*}$ & $33_{*}$ & $32_{*}$  & $39_{*}$ & $110_{*}$\tabularnewline
100 &$45_{*}$ & $45_{*}$ & $44_{*}$ & $43_{*}$ & $42_{*}$ & $47_{*}$ & $136_{*}$  \tabularnewline
\hline 
\hline 
\multicolumn{8}{|c|}{$\alpha=0.8$}\tabularnewline
\hline 
$k\backslash\beta$ & 0 &  0.4 &  0.8 & 1 & 1.2 &  1.6 &  2.0\tabularnewline
\hline 
10 & $16_{25}$ &  $16_{24}$ & $16_{24}$ &  $16_{24}$ & $17_{24}$  & $18_{24}$ &  $19_{26}$\tabularnewline
20 & $23_{59}$ &  $23_{59}$ & $23_{59}$ &  $23_{59}$ & $24_{59}$ &  $30_{58}$ &  $39_{61}$\tabularnewline
40 & $*_{*}$ & $*_{*}$ &  $175_{*}$ & $139_{*}$ & $96_{*}$ &  $65_{160}$ & $76_{132}$\tabularnewline
60 & $*_{*}$  & $*_{*}$ &  $*_{*}$ & $*_{*}$ & $169_{*}$ &  $114_{*}$ &  $119_{197}$\tabularnewline
80 & $*_{*}$ &  $*_{*}$ &  $*_{*}$ & $*_{*}$ & $*_{*}$ & $166_{*}$ &  $165_{*}$\tabularnewline
100 &$*_{*}$ &$*_{*}$ &$*_{*}$ &$*_{*}$&$*_{*}$&$*_{*}$&$*_{*}$ \tabularnewline
\hline 
\hline 
\multicolumn{8}{|c|}{$\alpha=0.6$}\tabularnewline
\hline 
$k\backslash\beta$ & 0 &  0.4 &  0.8 & 1 & 1.2 &  1.6 &  2.0\tabularnewline
\hline 
10 & $16_{18}$ &  $16_{18}$ & $16_{18}$ &  $16_{18}$ & $16_{18}$ &  $16_{19}$ & $19_{22}$\tabularnewline
20 & $55_{72}$ &  $55_{71}$ &  $53_{67}$ & $51_{63}$ & $48_{58}$ &  $39_{46}$ &  $39_{43}$\tabularnewline
40 & $140_{124}$ &  $138_{129}$ &  $131_{135}$ & $125_{133}$ & $114_{125}$ &  $86_{94}$ &  $81_{76}$\tabularnewline
60 & $*_{*}$ &  $*_{*}$ &  $*_{*}$ & $*_{*}$ & $*_{*}$ & $147_{141}$ &  $113_{102}$\tabularnewline
80 & $*_{*}$ &  $*_{*}$ &  $*_{*}$ & $*_{*}$ & $*_{*}$  & $178_{160}$ &  $135_{121}$\tabularnewline
100 & $*_{*}$& $*_{*}$ & $*_{*}$ & $*_{*}$ &&& \tabularnewline
\hline 
\end{tabular}

\end{center} 
\caption{Number of iterations for ${\tt HRAS}$ with
  $\epsprob = 0$, $\epsprec = k^{\beta}$,   $\Hprec = k^{-\alpha}$, and right preconditioning.
 } 
\label{tab:2afinal}
\end{table}

\begin{table}
\begin{center}
\begin{tabular}{|c||c|c|c|c|c|c|c|}
\hline 
\multicolumn{8}{|c|}{$\alpha=1$}\tabularnewline
\hline 
$k\backslash\beta$ & 0 &  0.4 &  0.8 & 1 & 1.2 & 1.6 &  2.0\tabularnewline
\hline 
10 & $15_{45}$ &   $15_{45}$ & $15_{45}$ & $15_{46}$ & $15_{46}$ &  $17_{47}$ &  $21_{49}$\tabularnewline
20 & $17_{104}$ & $17_{104}$ &  $17_{105}$ & $17_{105}$ & $18_{105}$ &  $20_{107}$ & $34_{113}$\tabularnewline
40 &  $21_{*}$ & $21_{*}$ & $21_{*}$ & $21_{*}$ & $21_{*}$ &  $26_{*}$ & $56_{*}$\tabularnewline
60 & $27_{*}$ & $27_{*}$ &  $27_{*}$ & $27_{*}$ & $27_{*}$ &  $33_{*}$ &  $78_{*}$\tabularnewline
80 & $36_{*}$ &  $36_{*}$ & $35_{*}$ & $35_{*}$ & $34_{*}$ & $40_{*}$ & $101_{*}$\tabularnewline
100 & $47_{*}$ & $47_{*}$ & $46_{*}$ & $45_{*}$ & $43_{*}$ & $48_{*}$ & $123_{*}$ \tabularnewline
\hline 
\hline 
\multicolumn{8}{|c|}{$\alpha=0.8$}\tabularnewline
\hline 
$k\backslash\beta$ & 0 & 0.4 &  0.8 & 1 & 1.2 & 1.6 &  2.0\tabularnewline
\hline 
10 & $15_{27}$ &  $15_{27}$ & $15_{27}$ &  $15_{27}$ & $15_{27}$ & $16_{28}$ &  $19_{29}$\tabularnewline
20 & $23_{56}$ & $23_{56}$ & $23_{56}$ &  $23_{56}$ & $23_{56}$ &  $25_{57}$ &  $34_{62}$\tabularnewline
40 & $51_{106}$ &  $51_{106}$ &  $51_{106}$ & $51_{106}$ & $49_{106}$ &  $48_{108}$ & $63_{116}$\tabularnewline
60 & $107_{150}$ &  $106_{150}$ &  $105_{150}$ & $104_{150}$ & $99_{150}$ &  $85_{151}$ &  $99_{164}$\tabularnewline
80 & $187_{194}$ & $185_{193}$ &  $183_{193}$ & $178_{193}$ & $168_{193}$ &  $132_{194}$ &  $138_{*}$\tabularnewline
100 & $*_{*}$ & $*_{*}$  & $*_{*}$  & $*_{*}$  & $*_{*}$  &$185_{*}$  & $179_{*}$ \tabularnewline
\hline 
\hline 
\multicolumn{8}{|c|}{$\alpha=0.6$}\tabularnewline
\hline 
$k\backslash\beta$ & 0 &  0.4 &  0.8 & 1 & 1.2 &  1.6 &  2.0\tabularnewline
\hline 
10 & $14_{18}$ &  $14_{18}$ &  $14_{18}$ & $14_{18}$ & $14_{19}$ &  $15_{20}$ &  $17_{23}$\tabularnewline
20 & $27_{31}$ &  $27_{31}$ &  $26_{31}$ & $26_{31}$ &  $26_{32}$ & $28_{33}$ &  $36_{42}$\tabularnewline
40 & $51_{51}$ &  $51_{51}$ &  $50_{51}$ & $50_{51}$ & $48_{51}$ &  $50_{51}$ &  $73_{66}$\tabularnewline
60 & $72_{71}$ &  $71_{71}$ &  $70_{71}$ & $69_{71}$ & $69_{70}$ &  $70_{67}$ &  $104_{91}$\tabularnewline
80 & $74_{85}$ &  $74_{85}$ & $74_{85}$ &  $74_{84}$ & $74_{83}$ &  $77_{77}$ &  $126_{111}$\tabularnewline
100 &$84_{98}$& $84_{98}$ & $84_{98}$ & $84_{97}$ & $84_{95}$ & $86_{87}$ & $148_{131}$\tabularnewline
\hline 
\end{tabular}
\end{center} 
\caption{Number of iterations for ${\tt ImpHRAS}$ with
  $\epsprob = 0$, $\epsprec = k^{\beta}$,   $\Hprec = k^{-\alpha}$, and right preconditioning.
}
\label{tab:2bfinal} 
\end{table}

\red{At the end of Experiment 1, we argued that the fact that $B^{-1}_k$ was a good preconditioner for $A_k$, along with the results of \cite{GaGrSp:13}, suggested that $B_{k}^{-1}$ would be a good preconditioner for $A$. 
Having performed Experiment 2, we can now compare preconditioning $A_k$ with $B_k^{-1}$ to preconditioning $A$ with $B_k^{-1}$.
Relevant results are in the column in Table \ref{tab:1bfinal} for right-preconditioning with HRAS and $\alpha=1$ and the column in Table \ref{tab:2afinal} with $\alpha=1$ and $\beta=1$.
Although the iteration counts are slightly different, a linear least-squares fit shows that the rate of growth with $k$ is very similar in each case ($k^{0.60}$ versus $k^{0.53}$)
which is in line with the intuition above.
}

\subsection{Experiment 3}

From observations above, we can identify a possible multilevel
strategy for preconditioning the problem with $\hprob = k^{-3/2}$ and
$\epsprob = 0$.  We do this only in 2D using the experiments above,
but it is possible to carry out a similar analysis in 3D. 

The first step is to use a 
two level {\tt HRAS} with,  say,   $\epsprec =  k$ and $\Hprec =
\mathcal{O}(k^{-1})$.  Denote the number of iterations of this method
by   $I_1(k)$. (A least squares linear fit of the data for the two-level method in Column 5 of the
first pane of   Table
\ref{tab:2afinal},  indicates that $I_1(k) \approx
\mathcal{O}(k^{0.4})$.)   
  Each application of the preconditioner requires the solution of one system of size   
$\Hprec^{-2} = \mathcal{O}(k^2)$ and $\cO(k^2)$ systems of size
$(\Hprec/\hprob)^2 \approx \cO(k)$. Each matrix-vector
multiplication with the system matrix needs  $\mathcal{O}(k^3)$
operations.    The total cost is then approximately:
\begin{equation}
\label{eq:approxcost}
I_1(k) \left[\underbrace{C(k^2)}_{\mathrm{Coarse\  solve}} + 
\underbrace{\cO(k^2)  C(k)}_{\mathrm{local \ solves}} +
  \underbrace{\cO(k^3)}_{\mathrm{matrix-vector \
      multiplications}}\right] \ ,   
\end{equation}
where $C(m)$ denotes an estimate of the cost of backsolving with a
factorized  $m \times m$ finite element system in 2D (the theoretical upper bound is    $C(m) \sim m^{3/2}$).
(Only
backsolves need be counted since these appear in every iteration while
the  factorization needs only be done
once.) 

The local solves in \eqref{eq:approxcost}  can in
principle   be done  in parallel,
so that the main bottleneck is likely to be  the
(relatively large) coarse solve. For this reason we consider
replacing  the direct  coarse solve  with  an inner iteration within an FGMRES
 set-up.
Thus we need an efficient iterative method for the coarse problem,
which itself is a  Helmholtz
finite element system with $\hprob \sim   k^{-1}$, $\epsprob = k$. Table
\ref{tab:3a} gives some experiments with preconditioned iterative
methods for  problems of this form  in the two cases 
$\hprob = \pi/10k$ and $\hprob = \pi/5k$ (20 and 10 grid points per
wavelength respectively), using {\tt ImpHRAS} with $\epsprec = k$ and $\Hprec =
k^{-1/2}$.  Iteration counts are given   
for the two-level variant (and  the one-level variant as subscripts).     
These show that the one-level method works just as well
as (sometimes even better than) the two level method. With the number of iterations denoted by
$I_2(k)$, a least-squares linear fit to the one-level data for $\hprob= \pi/5k$ yields the estimate $I_2(k) \sim k^{0.3}$. 

\begin{table}[h]
\begin{center}
\begin{tabular}{|c||c|c|}
\hline 
$k$ & $\hprob = \pi/5k$  & $\hprob = \pi/10k$ \\
\hline 
10 & $9_{10}$ & $9_{9}$ \\
20 &  $14_{15}$ & $14_{15}$ \\
40 &  $21_{24}$ & $22_{24}$ \\
60 &  $30_{32}$ & $31_{32}$ \\
80 &  $35_{35}$ & $37_{35}$\\
100 &   $39_{38}$ & $41_{39}$ \\
120 &  $42_{40}$ & $45_{43}$ \\
140 &  $46_{43}$ & $49_{46}$ \\
\hline 
\end{tabular}
\end{center}
\caption{Number of iterations for {\tt ImpHRAS} with $\epsprob = k = \epsprec$,
  $\Hprec = k^{-0.5}$, and right preconditioning.}
\label{tab:3a}
\end{table}

If we use this method (in its one level form) to approximate the
coarse solve 
in \eqref{eq:approxcost},  then each application of the preconditioner requires  
the solution of $\cO(k)$ systems of size $\cO(k)$ and the total cost
of the solution 
is about  $I_2(k)\left(kC(k) + \cO(k^2)\right)$. 
Using this to replace the
first term on the right-hand side of \eqref{eq:approxcost}, 
the cost  of the resulting
inner-outer algorithm  would be
approximately

\begin{equation}
\label{eq:approxcost1}
I_1(k) 
\left[
I_2(k) \left(k C(k) + k^2\right) + 
\cO(k^2)  C(k) +
  \cO(k^3)
\right] \ .   
\end{equation}

Now it is well-known that in 2D fast direct solvers for finite element systems of
size $k$ perform with $\cO(k)$ complexity for $k \leq 10^5$. So for
practically relevant wavenumbers we expect a complexity for the
inner-outer algorithm of the form 
$
I_1(k) \left[k^2 I_2(k) + \cO(k^3) \right] \ .   
$

In Table \ref{tab:IOImpHRAS}, we   illustrate the performance of this inner-outer method (which we denote {\tt IO-ImpHRAS}) 
implemented within the FGMRES 
framework. The outer tolerance is $10^{-6}$ (as before, and the inner tolerance $\tau$ 
is as indicated). Below each iteration count we present also the total running time of our reference NumPy implementation; this includes the setup time of all (sub)matrices. We also give an average time for each outer iteration. 

We see from the table that the commonly-used choice $\eps=k^2$ performs much worse that the choice of $\eps=k^\beta$ for the values of $\beta<2$ considered here; of these latter choices, $\eps=k$ seems to be the best choice for this composite algorithm.
The best times were obtained for a fairly large inner tolerance $\tau$; the 
results for $\eps =k, \tau = 0.5$ show a growth of the total compute time with $k$ that is close to $\mathcal{O}(k^4) = \mathcal{O}(n^{4/3})$.  Note that the runs were performed on a single CPU core; the method is highly parallelisable and  parallel implementation results will be presented in a future paper.      
\vspace{0.2cm}

\begin{table}
\begin{center}
\begin{tiny}
\begin{tabular}{|c||c|c|c|c|c|c|c|}
\hline 
\multicolumn{8}{|c|}{$\tau=0.01$}\tabularnewline
\hline 
$k\backslash\beta$  & \textbf{0 } & \textbf{0.4 } & \textbf{0.8 } & \textbf{1 } & \textbf{1.2 } & \textbf{1.6 } & \textbf{2.0}\tabularnewline
\hline 
\multirow{2}{*}{\textbf{10 }} & \textbf{15(5) } & \textbf{15(5) } & \textbf{15(5) } & \textbf{15(5) } & \textbf{15(5) } & \textbf{17(4) } & \textbf{21(3)}\tabularnewline
 & \textit{\emph{\scriptsize{0.69 {[}0.02{]}}}} & \textit{\emph{\scriptsize{   0.75 {[}0.02{]}}}} & \textit{\emph{\scriptsize{   0.73 {[}0.03{]}}}} & \textit{\emph{\scriptsize{   0.77 {[}0.03{]}}}} & \textit{\emph{\scriptsize{   0.74 {[}0.03{]}}}} & \textit{\emph{\scriptsize{0.75 {[}0.02{]}}}} & \textit{\emph{\scriptsize{0.82 {[}0.02{]}}}}\tabularnewline
\multirow{2}{*}{\textbf{20 }} & \textbf{17(7) } & \textbf{17(7) } & \textbf{17(7) } & \textbf{17 (6) } & \textbf{18(6) } & \textbf{20(4) } & \textbf{34 (2) }\tabularnewline
 & \textit{\emph{\scriptsize{4.49 {[}0.12{]}}}} & \textit{\emph{\scriptsize{   4.38 {[}0.12{]}}}} & \textit{\emph{\scriptsize{   4.34 {[}0.11{]}}}} & \textit{\emph{\scriptsize{   4.35 {[}0.11{]}}}} & \textit{\emph{\scriptsize{   4.42 {[}0.11{]}}}} & \textit{\emph{\scriptsize{   4.38 {[}0.10{]}}}} & \textit{\emph{\scriptsize{   5.18 {[}0.08{]}}}}\tabularnewline
\multirow{2}{*}{\textbf{40 }} & \textbf{21(11) } & \textbf{21(11) } & \textbf{21(11) } & \textbf{21(10)} & \textbf{21(9) } & \textbf{26(6) } & \textbf{56 (2) }\tabularnewline
 & \textit{\emph{\scriptsize{62.9 {[}0.96{]}}}} & \textit{\emph{\scriptsize{  62.8 {[}0.96{]}}}} & \textit{\emph{\scriptsize{  63.1 {[}0.96{]}}}} & \textit{\emph{\scriptsize{  62.4 {[}0.93{]}}}} & \textit{\emph{\scriptsize{  62.1 {[}0.91{]}}}} & \textit{\emph{\scriptsize{  63.3 {[}0.81{]}}}} & \textit{\emph{\scriptsize{  82.8 {[}0.73{]}}}}\tabularnewline
\multirow{2}{*}{\textbf{60} } & \textbf{27(15) } & \textbf{27(15) } & \textbf{27(15) } & \textbf{27(14) } & \textbf{27(12) } & \textbf{33(6) } & \textbf{78(2)}\tabularnewline
 & \textit{\emph{\scriptsize{420.9 {[}4.13{]}}}} & \textit{\emph{\scriptsize{ 422 {[}4.13{]}}}} & \textit{\emph{\scriptsize{ 423 {[}4.10{]}}}} & \textit{\emph{\scriptsize{ 421 {[}4.01{]}}}} & \textit{\emph{\scriptsize{ 416 {[}3.87{]}}}} & \textit{\emph{\scriptsize{ 426 {[}3.46{]}}}} & \textit{\emph{\scriptsize{ 560 {[}3.21{]}}}}\tabularnewline
\multirow{2}{*}{\textbf{80 }} & \textbf{36(17) } & \textbf{36(17) } & \textbf{35(16) } & \textbf{35(15)} & \textbf{34(12) } & \textbf{40(6) } & \textbf{101(2)}\tabularnewline
 & \textit{\emph{\scriptsize{1536 {[}11.1{]}}}} & \textit{\emph{\scriptsize{1564 {[}11.1{]}}}} & \textit{\emph{\scriptsize{1608 {[}10.89{]}}}} & \textit{\emph{\scriptsize{1555 {[}10.5{]}}}} & \textit{\emph{\scriptsize{1540 {[}10.0{]}}}} & \textit{\emph{\scriptsize{1542 {[}8.91{]}}}} & \textit{\emph{\scriptsize{2052 {[}8.54{]}}}}\tabularnewline
\multirow{2}{*}{\textbf{100 }} & \textbf{47(21)} & \textbf{47(21) } & \textbf{46(20)} & \textbf{45(18)} & \textbf{43(15)} & \textbf{48(7) } & \textbf{123(2)}\tabularnewline
 & \textit{\emph{\scriptsize{4061 {[}20.9{]}}}} & \textit{\emph{\scriptsize{4281 {[}21.1{]}}}} & \textit{\emph{\scriptsize{4073 {[}19.6{]}}}} & \textit{\emph{\scriptsize{3992 {[}20.5{]}}}} & \textit{\emph{\scriptsize{4078 {[}18.4{]}}}} & \textit{\emph{\scriptsize{3880 {[}17.1{]}}}} & \textit{\emph{\scriptsize{5130 {[}17.5{]}}}}\tabularnewline
\hline 
\multicolumn{8}{|c|}{$\tau=0.1$}\tabularnewline
\hline 
$k\backslash\beta$  & \textbf{0 } & \textbf{0.4 } & \textbf{0.8 } & \textbf{1 } & \textbf{1.2 } & \textbf{1.6 } & \textbf{2.0}\tabularnewline
\hline 
\multirow{2}{*}{\textbf{10} } & \textbf{15(3) } & \textbf{15(3) } & \textbf{15(3) } & \textbf{15(3) } & \textbf{15(3) } & \textbf{17(2) } & \textbf{19(2)}\tabularnewline
 & {\scriptsize{0.60~{[}0.02{]}}} & {\scriptsize{   0.60~{[}0.02{]}}} & {\scriptsize{ 0.61~{[}0.02{]}}} & {\scriptsize{   0.60~{[}0.02{]}}} & {\scriptsize{ 0.59~{[}0.02{]}}} & {\scriptsize{   0.60~{[}0.02{]}}} & {\scriptsize{ 0.73~{[}0.02{]}}}\tabularnewline
\multirow{2}{*}{\textbf{20} } & \textbf{17(4) } & \textbf{17(4) } & \textbf{17(4) } & \textbf{17(4) } & \textbf{18(4) } & \textbf{20(3) } & \textbf{34(1) }\tabularnewline
 & {\scriptsize{3.88~{[}0.10{]}}} & {\scriptsize{   3.84~{[}0.09{]}}} & {\scriptsize{   3.78~{[}0.09{]}}} & {\scriptsize{   3.91~{[}0.09{]}}} & {\scriptsize{   3.93~{[}0.09{]}}} & {\scriptsize{   3.98~{[}0.09{]}}} & {\scriptsize{   4.86~{[}0.08{]}}}\tabularnewline
\multirow{2}{*}{\textbf{40} } & \textbf{21(7) } & \textbf{21(7) } & \textbf{21(7) } & \textbf{21(6)} & \textbf{21(6) } & \textbf{26(4) } & \textbf{56(1) }\tabularnewline
 & {\scriptsize{ 56.9~{[}0.83{]}}} & {\scriptsize{56.9~{[}0.83{]}}} & {\scriptsize{  56.8~{[}0.82{]}}} & {\scriptsize{56.4~{[}0.81{]}}} & {\scriptsize{  56.8~{[}0.83{]}}} & {\scriptsize{  60.3~{[}0.76{]}}} & {\scriptsize{  80.6~{[}0.70{]}}}\tabularnewline
\multirow{2}{*}{\textbf{60} } & \textbf{27(9)} & \textbf{ 27(9)} & \textbf{ 27(8)} & \textbf{ 27(8)} & \textbf{ 27(7)} & \textbf{ 34(4)} & \textbf{ 82(1)}\tabularnewline
 & {\scriptsize{386~{[}3.65{]}}} & {\scriptsize{ 384~{[}3.63{]}}} & {\scriptsize{ 384~{[}3.58{]}}} & {\scriptsize{ 382~{[}3.55{]}}} & {\scriptsize{379~{[}3.46{]}}} & {\scriptsize{396~{[}3.24{]}}} & {\scriptsize{ 543~{[}3.13{]}}}\tabularnewline
\multirow{2}{*}{\textbf{80 }} & \textbf{36(10)} & \textbf{ 35(10)} & \textbf{ 35(10)} & \textbf{ 35(9)} & \textbf{ 34(8)} & \textbf{ 40(4)} & \textbf{104(1)}\tabularnewline
 & {\scriptsize{1361~{[}9.55{]}}} & {\scriptsize{1389~{[}9.51{]}}} & {\scriptsize{1398~{[}9.40{]}}} & {\scriptsize{1344~{[}9.34{]}}} & {\scriptsize{1368~{[}9.10{]}}} & {\scriptsize{1332~{[}8.49{]}}} & {\scriptsize{1926~{[}8.35{]}}}\tabularnewline
\multirow{2}{*}{\textbf{100 }} & \textbf{47(13)} & \textbf{ 46(13)} & \textbf{ 46(12)} & \textbf{ 45(11)} & \textbf{ 43(10)} & \textbf{ 48(4)} & \textbf{126(1)}\tabularnewline
 & {\scriptsize{3699~{[}18.6{]}}} & {\scriptsize{3723~{[}19.3{]}}} & {\scriptsize{3700~{[}18.1{]}}} & {\scriptsize{3535~{[}17.1{]}}} & {\scriptsize{3687~{[}16.7{]}}} & {\scriptsize{3530~{[}16.1{]}}} & {\scriptsize{4935~{[}16.9{]}}}\tabularnewline
\hline 
\multicolumn{8}{|c|}{$\tau=0.5$}\tabularnewline
\hline 
$k\backslash\beta$  & \textbf{0 } & \textbf{0.4 } & \textbf{0.8 } & \textbf{1 } & \textbf{1.2 } & \textbf{1.6 } & \textbf{2.0}\tabularnewline
\hline 
\multirow{2}{*}{\textbf{10} } & \textbf{17(2) } & \textbf{17(1) } & \textbf{18(1) } & \textbf{18(1) } & \textbf{18(1) } & \textbf{19(1) } & \textbf{23(1)}\tabularnewline
 & {\scriptsize{0.61~{[}0.02{]}}} & {\scriptsize{ 0.59~{[}0.02{]}}} & {\scriptsize{ 0.66~{[}0.01{]}}} & {\scriptsize{ 0.66~{[}0.02{]}}} & {\scriptsize{ 0.65~{[}0.02{]}}} & {\scriptsize{ 0.66~{[}0.01{]}}} & {\scriptsize{ 0.65~{[}0.01{]}}}\tabularnewline
\multirow{2}{*}{\textbf{20 }} & \textbf{19(2) } & \textbf{19(2) } & \textbf{19(2) } & \textbf{19(2) } & \textbf{19(2) } & \textbf{25(1) } & \textbf{36(1)}\tabularnewline
 & {\scriptsize{3.86~{[}0.08{]}}} & {\scriptsize{   3.72~{[}0.08{]}}} & {\scriptsize{3.72~{[}0.08{]}}} & {\scriptsize{3.68~{[}0.08{]}}} & {\scriptsize{   3.66~{[}0.08{]}}} & {\scriptsize{   4.00~{[}0.07{]}}} & {\scriptsize{   4.96~{[}0.07{]}}}\tabularnewline
\multirow{2}{*}{\textbf{40 }} & \textbf{22(4) } & \textbf{22(4) } & \textbf{22(4) } & \textbf{22(3) } & \textbf{22(3) } & \textbf{28(2) } & \textbf{61(1) }\tabularnewline
 & {\scriptsize{54.8~{[}0.73{]}}} & {\scriptsize{54.9~{[}0.73{]}}} & {\scriptsize{  54.8~{[}0.72{]}}} & {\scriptsize{  54.7~{[}0.71{]}}} & {\scriptsize{  54.8~{[}0.71{]}}} & {\scriptsize{  58.0~{[}0.69{]}}} & {\scriptsize{  80.4~{[}0.68{]}}}\tabularnewline
\multirow{2}{*}{\textbf{60 }} & \textbf{28(5)} & \textbf{ 28(5)} & \textbf{ 28(5)} & \textbf{ 28(5)} & \textbf{ 28(4)} & \textbf{ 35(2)} & \textbf{ 82(1)}\tabularnewline
 & {\scriptsize{370~{[}3.20{]}}} & {\scriptsize{ 371~{[}3.20{]}}} & {\scriptsize{372~{[}3.19{]}}} & {\scriptsize{ 370~{[}3.16{]}}} & {\scriptsize{ 369~{[}3.11{]}}} & {\scriptsize{ 383~{[}3.00{]}}} & {\scriptsize{539 {[}3.10{]}}}\tabularnewline
\multirow{2}{*}{\textbf{80 }} & \textbf{36(6)} & \textbf{ 36(6)} & \textbf{ 36(6)} & \textbf{ 36(5)} & \textbf{ 35(5)} & \textbf{ 42(2)} & \textbf{104(1)}\tabularnewline
 & {\scriptsize{1288~{[}8.62{]}}} & {\scriptsize{1375~{[}8.69{]}}} & {\scriptsize{1300~{[}8.59{]}}} & {\scriptsize{1316~{[}8.51{]}}} & {\scriptsize{1273~{[}8.38{]}}} & {\scriptsize{1323~{[}8.08{]}}} & {\scriptsize{1909~{[}8.19{]}}}\tabularnewline
\multirow{2}{*}{\textbf{100 }} & \textbf{46(8)} & \textbf{ 46(8)} & \textbf{ 46(7)} & \textbf{ 45(7)} & \textbf{ 44(6)} & \textbf{ 49(2)} & \textbf{126(1)}\tabularnewline
 & {\scriptsize{3533~{[}16.5{]}}} & {\scriptsize{3678~{[}16.01{]}}} & {\scriptsize{3586~{[}16.4{]}}} & {\scriptsize{3471~{[}15.9{]}}} & {\scriptsize{3483~{[}16.2{]}}} & {\scriptsize{3503~{[}15.5{]}}} & {\scriptsize{4832~{[}16.4{]}}}\tabularnewline
\hline 
\end{tabular}
\end{tiny}
\end{center} 
\caption{${\tt IO-ImpHRAS}$ with     
  $\epsprob = 0$ and $\epsprec = k^{\beta}$. Bold font: number of outer (inner) iterations). \red{Non-bold} font: total time in seconds [with an average time for each outer iteration in square brackets]  
} 
\label{tab:IOImpHRAS}
\end{table}

\noindent 
{\bf Acknowledgements.} \  We thank Melina Freitag, 
Stefan G\"{u}ttel, Jennifer Pestana and Andy Wathen for valuable advice 
about various aspects of weighted GMRES.  \red{We also thank Clemens Pechstein for valuable comments.}

\end{document}